\documentclass[
	paper=A4,
	pagesize=auto,
	fontsize=12pt
]{scrartcl}
\usepackage{libertine}
\recalctypearea

\usepackage[british]{babel}
\usepackage[style=alphabetic,url=false,isbn=false,doi=true,eprint=false,backend=biber]{biblatex}
\AtEveryBibitem{\clearlist{language}}
\usepackage{graphicx}
\usepackage{amsmath,amssymb,amsthm}
\usepackage{csquotes}
\usepackage{bm}
\usepackage{leftidx}

\usepackage{enumitem}
\setlist[enumerate]{label = (\roman*)}

\usepackage{zref-clever}
\usepackage{hyperref}
\usepackage{xurl}
\hypersetup{colorlinks,breaklinks=true}

\AddToHook{env/proposition/begin}{%
\zcsetup{countertype={theorem=proposition}}}
\AddToHook{env/lemma/begin}{%
\zcsetup{countertype={theorem=lemma}}}
\AddToHook{env/corollary/begin}{%
\zcsetup{countertype={theorem=corollary}}}

\theoremstyle{plain}
\newtheorem{theorem}{Theorem}[section]
\newtheorem{proposition}[theorem]{Proposition}
\newtheorem{lemma}[theorem]{Lemma}
\newtheorem{corollary}[theorem]{Corollary}

\AddToHook{env/example/begin}{%
\zcsetup{countertype={theorem=example}}}
\AddToHook{env/remark/begin}{%
\zcsetup{countertype={theorem=remark}}}

\theoremstyle{remark}
\newtheorem{example}[theorem]{Example}
\newtheorem{remark}[theorem]{Remark}

\AddToHook{env/definition/begin}{%
\zcsetup{countertype={theorem=definition}}}

\theoremstyle{definition}
\newtheorem{definition}[theorem]{Definition}

\newlength{\JZHeightOfX}
\newcommand{\JZOrcidlink}[1]{
\setlength{\JZHeightOfX}{\fontcharht\font`X}
\includegraphics[height=\JZHeightOfX]{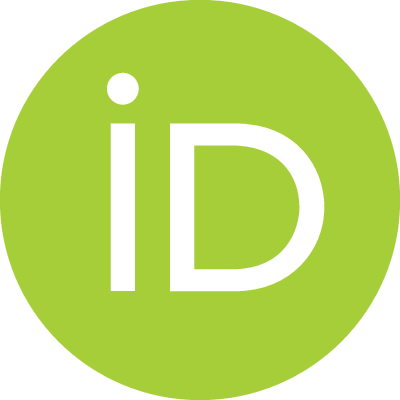}
\href{https://orcid.org/#1}{#1}
}

\begin{document}
\title{Countability Properties of Weakly Compact Sets in Asymmetric Locally Convex Spaces}
\author{
Jobst Ziebell\hspace{2em}\JZOrcidlink{0000-0002-9715-6356}\\
\small{Faculty of Mathematics and Computer Science, Friedrich-Schiller-University, Jena, Germany}
}
\date{\today}
\maketitle

\begin{abstract}
A dual pair formulation for asymmetric locally convex spaces is developed that strictly generalises the ordinary vector space setting.
The concept of a polar topology carries over to the asymmetric case and some familiar results are reproduced or generalised such as the bipolar theorem and a Mackey-Arens type theorem.
The implications of weak compactness and countability properties are studied and appear intimately connected to separation properties.
An asymmetric analogue $C_a(K)$ of the well-known space $C_p(K)$ is introduced and the properties of (relatively) (countably respectively sequentially) compact subspaces are investigated.
In particular, it is shown that $C_a([0,1])$ is not angelic.
However, for Hausdorff subspaces satisfying a simple closure condition, the different compactness conditions are equivalent and imply the Fréchet-Urysohn property.
Moreover, an analogue of the Eberlein-Šmulian theorem holds in asymmetrically normed spaces.
\end{abstract}

\section{Introduction}
The goal of this paper is to derive an asymmetric analogue of the Eberlein-Šmulian theorem and of particular interest are the sequential properties of weakly compact sets in asymmetrically normed spaces.
A motivation to study these comes from a simple consideration in measure theory:
Suppose one has a finite measure $\mu$ on some measurable space $(X, \mathcal{A})$ and a sequence $(f_n)_{n \in \mathbb{N}}$ of real-valued $\mu$-measurable functions on $X$.
One might consider the convergence properties of the sequence
\begin{equation}
n \mapsto \int_X \exp \left[ f_n \right] \mathrm{d} \mu \, .
\end{equation}
If it converges to a finite number, one immediately knows that for large enough $N \in \mathbb{N}$, the functions $(\max\{ f_n, 0 \})_{n = N}^\infty$ are uniformly $\mu$-integrable.
Using Komlos' theorem and the convexity of the integrand, it follows that $\{ \max\{ \sum_{m = N}^n f_m / n, 0 \} : n \in \mathbb{N}_{\ge N} \}$ is relatively compact in $L^1(\mu)$.
At the same time, one knows very little about the negative parts of the $f_n$s and thus not even whether $f_n \in L^1(\mu)$ for large $n \in \mathbb{N}$.
This phenomenon is easily captured by the asymmetric seminorm $f \mapsto \int_X \max \{f, 0 \} \mathrm{d} \mu$ leading to an asymmetrically normed space.
In fact, the set $\{ f_n : n \in \mathbb{N}_{\ge N} \}$ has powerful compactness properties in the weak topology of this asymmetrically normed space.
If this set in question has sequential properties, these enable conclusions using Fatou's lemma, the dominated convergence theorem and many more.

The natural setting of the Eberlein-Šmulian theorem is the stage of dual pairs.
However, this is a concept that has - to the knowledge of the author - not yet been treated in the context of asymmetric locally convex spaces.
A considerably more general framework has been developed for locally convex cones in \cite[Chapter II.3]{src:KeimelRoth:OrderedConesAndApproximation}, but these are built on rather abstract order structures and lack a direct connection to the comparatively simple concept of a family of seminorms.
Thus, the purpose of this paper is threefold:
\begin{itemize}
\item Develop a simple formulation of asymmetric dual pairs similar to the one in the realm of locally convex vector spaces.
\item Investigate the interplay of countability properties and weak compactness.
\item Apply the results to the weak topology of an asymmetrically normed space.
\end{itemize}
In the symmetric case, it is very instructive to embed a normed space with its weak topology in a function space $C_p(K)$, that is the set of continuous functions on a compact Hausdorff space equipped with the topology of pointwise convergence.
In the asymmetric considerations, the analogous space $C_a(K)$ becomes an asymmetric version of $C_p(K)$.
Unfortunately, the angelicity of $C_p(K)$ does not survive the passage to $C_a(K)$, but very useful compactness and sequentiality properties can be derived nonetheless.

In \zcref{sec:Preliminaries}, the basic notions are introduced and some technicalities which may be unfamiliar are given.
\zcref[S]{sec:AsymmetricDualPairs} develops the notion of an asymmetric dual pair as a pairing of a vector space and a cone.
Using this formulation, polar topologies, in particular the weak and weak-$\ast$ topologies are defined and the bipolar theorem as well as a Mackey-Arens type theorem is shown.
It is also proven that every $T_0$ asymmetric locally convex topology is naturally given as a polar topology.
In \zcref{sec:WeakCompactnessAndCountability}, the implications of weak compactness and countability properties are elucidated.
These are applied to the space $C_a(K)$ in \zcref{sec:CaK} where it is shown that several topological properties of $C_p(K)$ carry over to $C_a(K)$.
Moreover, it is shown that $C_a([0,1])$ is not angelic and one can trace the failure of angelicity to the possible non-Hausdorffness of compact subsets.
Finally, the straightforward consequences for the weak topology of an asymmetrically normed space are given in \zcref{sec:WeakTopologyOnAsymmetricallyNormedSpace}.
\section{Preliminaries}
\label{sec:Preliminaries}
\subsection{Some Topological Notions}
A topological space is \textbf{not} implicitly assumed to be Hausdorff.
It is \textbf{regular}, if every point has a neighbourhood basis consisting of closed sets \cite[Example 15.33.c]{src:Schlechter:HandbookOfAnalysis}.
A subset $A$ of a topological space $X$ is \textbf{compact}, if every open cover of $A$ has a finite subcover and following \cite{src:KönigKuhn:AngelicSpaces}, we shall call $A$ \textbf{relatively compact} in $X$ if every open cover of $X$ has a finite subcover of $A$.
Similarly, $A$ is \textbf{countably compact}, if every countable open cover has a finite subcover and $A$ is \textbf{relatively countably compact} in $X$, if every countable open cover of $X$ has a finite subcover of $A$.
Finally, $A$ is \textbf{sequentially compact}, if every sequence in $A$ has a convergent subsequence in $A$ and is \textbf{relatively sequentially compact} in $X$ if every sequence in $A$ has a convergent subsequence in $X$.
For completeness, we state the following:
\begin{corollary}
$A$ is relatively compact in $X$ if and only if every net $(x_\alpha)_{\alpha \in I}$ in $A$ has a cluster point in $X$.
$A$ is relatively countably compact in $X$ if and only if every sequence $(x_n)_{n \in \mathbb{N}}$ in $A$ has a cluster point in $X$.
\end{corollary}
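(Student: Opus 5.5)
Both equivalences are proved in the same way, as for the classical characterisation of compactness by cluster points. The one twist is that the covers are covers of $X$, but only $A$ has to be covered finitely. Recall that $x \in X$ is a cluster point of a net $(x_\alpha)_{\alpha \in I}$ if for every neighbourhood $U$ of $x$ and every $\alpha_0 \in I$ there is $\alpha \ge \alpha_0$ with $x_\alpha \in U$. No separation axiom is used anywhere, which matters since Hausdorffness is not assumed.

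\emph{Relative compactness, forward direction.} Assume $A$ is relatively compact in $X$ and let $(x_\alpha)_{\alpha \in I}$ be a net in $A$ without a cluster point in $X$. Then every $x \in X$ has an open neighbourhood $U_x$ and an index $\alpha_x$ with $x_\alpha \notin U_x$ for all $\alpha \ge \alpha_x$. The family $\{U_x : x \in X\}$ is an open cover of $X$, so finitely many $U_{x_1}, \dots, U_{x_n}$ cover $A$. Since $I$ is directed, pick $\alpha \ge \alpha_{x_1}, \dots, \alpha_{x_n}$. Then $x_\alpha \in A$ lies in none of the $U_{x_i}$, a contradiction.

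\emph{Relative compactness, converse.} Suppose $\mathcal{U}$ is an open cover of $X$ with no finite subfamily covering $A$. Let $I$ be the set of finite subsets of $\mathcal{U}$, directed by inclusion. For each $F \in I$ choose $x_F \in A \setminus \bigcup F$. Let $x$ be a cluster point in $X$ and pick $U \in \mathcal{U}$ with $x \in U$. Then for all $F \supseteq \{U\}$ we have $x_F \notin U$, so $x$ is not a cluster point. This contradiction proves the converse.

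\emph{Countable version.} The argument is the same with sequences. Forward: if $(x_n)$ has no cluster point, then each $x \in X$ has an open neighbourhood $U_x$ containing only finitely many terms $x_n$. This cover is not countable, and that is the main obstacle. I would fix it by passing to the countable family $V_N := X \setminus \overline{\{x_n : n \ge N\}}$ for $N \in \mathbb{N}$. Each $V_N$ is open. The $V_N$ cover $X$: given $x$, its neighbourhood $U_x$ misses $\{x_n : n \ge N\}$ for some $N$, so $x \notin \overline{\{x_n : n\ge N\}}$. The $V_N$ are increasing in $N$, so relative countable compactness yields a single $V_N \supseteq A$. But $x_N \in A$ and $x_N \notin V_N$, a contradiction.

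Converse: given a countable open cover $(U_k)_{k\in\mathbb{N}}$ of $X$ with no finite subcover of $A$, choose $x_n \in A \setminus (U_1 \cup \dots \cup U_n)$. A cluster point $x$ lies in some $U_k$. Yet $x_n \notin U_k$ for all $n \ge k$, so $U_k$ contains only finitely many terms, which is impossible. This establishes both equivalences.
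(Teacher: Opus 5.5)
Your proof is correct in all four directions. In the countable forward direction, replacing the uncountable cover $\{U_x : x \in X\}$ by the increasing countable cover $V_N = X \setminus \overline{\{x_n : n \ge N\}}$ is exactly the step needed. The paper records this corollary only for completeness and gives no proof, and your argument is the standard one it implicitly relies on.
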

%
%
%
Note that if $X$ is regular, then $A$ is relatively compact if and only if its closure $\bar{A}$ is compact \cite[Proposition 17.15]{src:Schlechter:HandbookOfAnalysis}.
\begin{definition}[{\cite{src:KönigKuhn:AngelicSpaces}}]
\label{def:angelicity}
A topological space $X$ is \textbf{angelic}, if given a relatively countably compact subset $A \subseteq X$
\begin{enumerate}
\item\label{def:angelicity:RelativeCompactnessEquivalence} $A$ is relatively compact,
\item for each $a \in \mathrm{cl}\, A$, there is a sequence $(a_n)_{n \in \mathbb{N}}$ in $A$ converging to $a$.
\end{enumerate} 
\end{definition}
A \textbf{network} in a topological space $X$ is a collection $\mathcal{N}$ of subsets of $X$ such that for every $x \in X$ and every neighbourhood $U$ of $x$, there is some $N \in \mathcal{N}$ with $x \in N \subseteq U$.
A topological space $X$ is \textbf{Lindelöf}, if every open cover has a countable subcover of $X$.
It is \textbf{hereditary Lindelöf} if the same is true for every subspace of $X$.
The following \zcref[noref]{lem:CountableNetworkImpliesHereditaryLindelöf} is well-known, but stated here for completeness.
\begin{lemma}
\label{lem:CountableNetworkImpliesHereditaryLindelöf}
Let $X$ be a topological space with a countable network.
Then $X$ is hereditary Lindelöf.
\end{lemma}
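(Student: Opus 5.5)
The plan is to first observe that having a countable network passes to subspaces, and then to prove that any space with a countable network is Lindelöf. Applying the second fact to each subspace gives the claim.

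For the first step, let $\mathcal{N}$ be a countable network of $X$ and $Y \subseteq X$ a subspace. I would check that $\mathcal{N}_Y := \{ N \cap Y : N \in \mathcal{N} \}$ is a countable network of $Y$. Take $y \in Y$ and a neighbourhood $V$ of $y$ in $Y$. Then $V \supseteq U \cap Y$ for some open $U \subseteq X$ with $y \in U$. By the network property in $X$ there is some $N \in \mathcal{N}$ with $y \in N \subseteq U$, and hence $y \in N \cap Y \subseteq U \cap Y \subseteq V$. Note that neither this step nor the next uses any separation axiom, consistent with the convention that spaces are not assumed Hausdorff.

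For the second step, let $X$ have a countable network $\mathcal{N}$ and let $\mathcal{U}$ be an open cover of $X$. I would set
\begin{equation}
\mathcal{N}' := \{ N \in \mathcal{N} : \exists\, U \in \mathcal{U} \text{ with } N \subseteq U \} ,
\end{equation}
which is countable, and use the axiom of countable choice to pick one $U_N \in \mathcal{U}$ with $N \subseteq U_N$ for each $N \in \mathcal{N}'$. To see that $\{ U_N : N \in \mathcal{N}' \}$ covers $X$, take $x \in X$ and choose $U \in \mathcal{U}$ with $x \in U$. The network property gives $N \in \mathcal{N}$ with $x \in N \subseteq U$, so $N \in \mathcal{N}'$ and $x \in N \subseteq U_N$. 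This yields a countable subcover.

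Both steps are routine. The only point needing care is the choice of the sets $U_N$, which is a countable choice over $\mathcal{N}'$. Combining the two steps, every subspace of $X$ is Lindelöf, i.e. $X$ is hereditary Lindelöf.
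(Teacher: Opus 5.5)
Your proof is correct and is essentially the paper's argument. The paper works directly with a subset $A$ covered by open sets of $X$, so it needs no separate step restricting the network. For each $x \in A$ it picks $N_x \in \mathcal{N}$ together with a cover member containing $N_x$, and then keeps countably many points whose $N_x$ exhaust the countable family $\{N_x : x \in A\}$. You index the choice by network elements rather than by points, which is slightly tidier bookkeeping and needs only countable choice, but the underlying idea is the same.
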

\begin{proof}
Denote the countable network by $\mathcal{N}$ and let $A \subseteq X$ and $(U_\alpha)_{\alpha \in I}$ be an open cover of $A$.
For every $x \in A$, there is by definition some $N_x \in \mathcal{N}$ with $x \in N_x$ and some $\alpha_x \in I$ with $N_x \subseteq U_{\alpha_x}$.
Consequently, $\cup_{x \in A} N_x$ is a cover of $A$ by the countable collection
\begin{equation}
\mathcal{M} = \{ N_x : x \in A \} \subseteq \mathcal{N} \, .
\end{equation}
Hence, there is a countable subset $B \subseteq A$ with
\begin{equation}
A \subseteq \bigcup_{x \in B} N_x \subseteq \bigcup_{x \in B} U_{\alpha_x} \, .
\end{equation}
\end{proof}
This is a variation of a well-known statement with a family of continuous functions that separates points.
\begin{lemma}
\label{lem:USCSeparation}
Let $X$ be a countably compact space and $F$ a countable family of real-valued upper semicontinuous functions on $X$ with the property that
\begin{equation}
\forall x, y \in X \exists f \in F : f \left( x \right) < f \left( y \right) \, .
\end{equation}
Then $X$ is compact, $T_1$ and has a countable network.
\end{lemma}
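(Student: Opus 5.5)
The plan is to build an explicit countable network out of the sublevel sets of the functions in $F$, then get compactness from \zcref{lem:CountableNetworkImpliesHereditaryLindelöf}; the $T_1$ property is a direct byproduct. Throughout, the separation hypothesis is read for $x \neq y$, where it applies in both orders.

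\emph{$T_1$.} Since each $f \in F$ is upper semicontinuous, every set $\{ f < q \}$ with $q \in \mathbb{R}$ is open. Given $x \neq y$, choose $f \in F$ with $f(x) < f(y)$ and a rational $q$ with $f(x) < q < f(y)$. Then $\{ f < q \}$ is an open neighbourhood of $x$ not containing $y$. Exchanging the roles of $x$ and $y$ gives $X$ is $T_1$.

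\emph{Countable network.} Let $\mathcal{N}$ be the collection of all finite intersections of sets $\{ g \geq q \}$ with $g \in F$ and $q \in \mathbb{Q}$. This collection is countable because $F \times \mathbb{Q}$ is. Fix $x \in X$ and an open neighbourhood $U$ of $x$, and put $C = X \setminus U$.
\begin{itemize}
\item For each $y \in C$, pick $g \in F$ with $g(y) < g(x)$ and a rational $q$ with $g(y) < q < g(x)$.
\item Then $y \in \{ g < q \}$, this set is open, and $x \notin \{ g < q \}$.
\item Hence $C$ is covered by the countable family of open sets $\{ g < q \}$ that do not contain $x$.
\end{itemize}
As a closed subset of a countably compact space, $C$ is countably compact, so finitely many of these sets, say $\{ g_i < q_i \}$ for $i = 1, \dots, n$, already cover $C$. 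Therefore
\begin{equation}
x \in \bigcap_{i=1}^n \{ g_i \geq q_i \} \subseteq X \setminus C = U ,
\end{equation}
and the middle set belongs to $\mathcal{N}$. If $C = \emptyset$, take the empty intersection $X$. So $\mathcal{N}$ is a countable network.

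\emph{Compactness.} By \zcref{lem:CountableNetworkImpliesHereditaryLindelöf}, $X$ is Lindelöf. Any open cover therefore has a countable subcover, which by countable compactness has a finite subcover, so $X$ is compact.

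The only step needing care is the network construction. The key point is to cover the complement $C$ by open sublevel sets that \emph{avoid} $x$, which uses the separation hypothesis in the order $g(y) < g(x)$. Only then does countable compactness of $C$ produce a finite intersection of the closed sets $\{ g \geq q \}$, each containing $x$, that lies inside $U$.
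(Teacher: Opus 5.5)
Your proof is correct and matches the paper's argument: the network of finite intersections of the closed sets $\{ f \geq q \}$, $f \in F$, $q \in \mathbb{Q}$; covering $X \setminus U$ by their open complements using the separation in the order $f(y) < f(x)$; then compactness from the countable-network lemma together with countable compactness. Your reading of the hypothesis for $x \neq y$, the explicit $T_1$ argument and the case $X \setminus U = \emptyset$ only spell out details the paper leaves implicit.
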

\begin{proof}
That $X$ is $T_1$ is immediately clear.
For each $f \in F$ and every $q \in \mathbb{Q}$, consider the closed set
\begin{equation}
C \left( f, q \right) = \left\{ x \in X : f \left( x \right) \ge q \right\}
\end{equation}
and let $\mathcal{N}$ be the collection of finite intersection of all such sets.
Then $\mathcal{N}$ is countable and, in fact, a network of $X$.
To see that, let $x \in X$ and $U$ be an open neighbourhood of $x$.
Clearly, $K = X \setminus U$ is countably compact and for every $y \in K$, we find some $f_y \in F$ with $f_y(y) < f_y(x)$.
Thus, there is some $q_y \in \mathbb{Q}$ such that $x \in C ( f_y, q_y )$ and $y \notin C ( f_y, q_y )$ and we may set $V_y = X \setminus C ( f_y, q_y )$.
Since $\mathcal{N}$ is countable, $\{V_y : y \in K \}$ is a countable open cover of $K$.
Hence, there are $y_1, \dots, y_n \in K$ with $\cup_{m = 1}^n V_{y_m} \supseteq K$.
Consequently,
\begin{equation}
x
\in
\bigcap_{m = 1}^n C ( f_{y_m}, q_{y_m} )
=
\bigcap_{m = 1}^n \left[ X \setminus V_{y_m} \right]
=
X \setminus \bigcup_{m = 1}^n V_{y_m}
\subseteq
X \setminus K
=
U \, .
\end{equation}
Thus, $X$ has a countable network and by \zcref{lem:CountableNetworkImpliesHereditaryLindelöf}, $X$ is Lindelöf and thus compact.
\end{proof}
\begin{definition}
A \textbf{quasi-uniformity} on a set $X$ is a nonempty family $\mathcal{U}$ of subsets of $X \times X$, with
\begin{equation}
\begin{aligned}
\forall U \in \mathcal{U} \forall x \in X &: (x, x) \in U \\
\forall U \in \mathcal{U}, V \subseteq X \times X &: U \subseteq V \implies V \in \mathcal{U} \\
\forall U, V \in \mathcal{U} &: U \cap V \in \mathcal{U} \\
\forall U \in \mathcal{U} \exists V \in \mathcal{U} &: V \circ V \subseteq U \, ,
\end{aligned}
\end{equation}
where $U \circ V = \{ (x, z) \in X \times X \mid \exists y \in X : (x, y) \in U \text{ and } (y, z) \in V \}$.
$\mathcal{U}$ induces a topology $\tau_{\mathcal{U}}$ on $X$ for which a subset $A \subseteq X$ is open if and only if
\begin{equation}
\forall x \in A \exists U \in \mathcal{U} : U[x] \subseteq A \, ,
\end{equation}
where $U[x] = \{ y \in X : (x, y) \in U \}$ and it can be shown that $U[x]$ is a neighbourhood of $x$.
The collection $\mathcal{U}^{-1} = \{ U^{-1} : U \in \mathcal{U} \}$ is another quasi-uniformity on $X$ called the \textbf{conjugate quasi-uniformity}, where
\begin{equation}
U^{-1} = \left\{ (y, x) \in X \times X : \left( x, y \right) \in U \right\} \, .
\end{equation}
$\mathcal{U}$ is a \textbf{uniformity} if $\mathcal{U} = \mathcal{U}^{-1}$.
Moreover, we let $\mathcal{U}_s$ (\enquote{s} for symmetric) denote the smallest uniformity containing the quasi-uniformity $\mathcal{U}$.
Given $U \in \mathcal{U}$, the \textbf{$U$-thickening} $A_U$ of a subset $A \subseteq X$ is given as
\begin{equation}
A
=
\left\{ y \in X \middle| \exists x \in A : \left( x, y \right) \in A \right\}
=
\bigcup_{x \in A} U \left[ x \right] \, .
\end{equation}
\end{definition}
\begin{lemma}
\label{lem:ClosureAsUInveseThickening}
Let $\mathcal{U}$ be a quasi-uniformity on a set $X$ equipped with the induced topology and $A \subset X$.
Then,
\begin{equation}
\mathrm{cl}\, A
=
\bigcap_{U \in \mathcal{U}} U^{-1} \left[A\right] \, .
\end{equation}
\end{lemma}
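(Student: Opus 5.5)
The plan is to prove the two inclusions directly from the definitions. The only thing to fix first is the meaning of $U^{-1}[A]$. Here $U^{-1}[x] = \{ y \in X : (x,y) \in U^{-1} \} = \{ y \in X : (y,x) \in U \}$, and
\begin{equation}
U^{-1}[A] = \bigcup_{x \in A} U^{-1}[x] = \left\{ y \in X : \exists x \in A : (y, x) \in U \right\} \, .
\end{equation}
So $y \in U^{-1}[A]$ holds exactly when $U[y] \cap A \neq \emptyset$.

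With this reformulation, both inclusions reduce to the characterisation of closure by neighbourhoods in $\tau_{\mathcal{U}}$. We use the fact stated in the definition: the sets $U[y]$, $U \in \mathcal{U}$, are neighbourhoods of $y$, and by the definition of openness every neighbourhood of $y$ contains some $U[y]$. Hence $\{ U[y] : U \in \mathcal{U} \}$ is a neighbourhood base at $y$.

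For $\subseteq$: let $y \in \mathrm{cl}\, A$ and $U \in \mathcal{U}$. Since $U[y]$ is a neighbourhood of $y$, it meets $A$. Therefore $y \in U^{-1}[A]$, and as $U$ was arbitrary, $y$ lies in the intersection.

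For $\supseteq$: let $y$ be in the intersection and let $W$ be any neighbourhood of $y$. Choose $U \in \mathcal{U}$ with $U[y] \subseteq W$. Then $y \in U^{-1}[A]$ gives $U[y] \cap A \neq \emptyset$, so $W \cap A \neq \emptyset$. Since every neighbourhood of $y$ meets $A$, we get $y \in \mathrm{cl}\, A$.

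The only step needing care is the claim that $U[y]$ is a genuine neighbourhood of $y$, and not merely a set containing some open set around $y$ in a weaker sense. The definition asserts this ("it can be shown"). If a justification is wanted, pick $V \in \mathcal{U}$ with $V \circ V \subseteq U$ and set
\begin{equation}
O = \left\{ z \in X : \exists W \in \mathcal{U} : W[z] \subseteq U[y] \right\} \, .
\end{equation}
Then $O \subseteq U[y]$, since $z \in W[z]$ by reflexivity. Also $y \in O$, taking $W = U$. Finally, $O$ is open: for $z \in O$ with witness $W$, choose $W' \in \mathcal{U}$ with $W' \circ W' \subseteq W$. For $z' \in W'[z]$ we have $W'[z'] \subseteq (W' \circ W')[z] \subseteq W[z] \subseteq U[y]$, so $z' \in O$ and hence $W'[z] \subseteq O$. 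This shows $O$ is an open set with $y \in O \subseteq U[y]$, which is what the argument needs.
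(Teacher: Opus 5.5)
Your proof is correct and follows the paper's argument: both inclusions rest on $y \in U^{-1}[x] \iff x \in U[y]$ together with $\{U[y] : U \in \mathcal{U}\}$ being a neighbourhood base at $y$. Your extra verification that $U[y]$ is a neighbourhood, which the paper takes from the definition, is also valid, although the auxiliary $V$ with $V \circ V \subseteq U$ is never used and can be dropped.
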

\begin{proof}
Let $x \in \mathrm{cl}\, A$, i.e. for every $U \in \mathcal{U}$, there is some $x_U \in A$ with $x_U \in U[x]$.
Then, $x \in U^{-1}[x_U]$.
Conversely, if $x \in \cap_{U \in \mathcal{U}} U^{-1} [A]$, for every $U \in \mathcal{U}$, there is some $x_U \in A$ with $x \in U^{-1}[x_U]$ and thus, $x_U \in U[x]$.
\end{proof}
A subset $A$ of a quasi-uniform space $(X, \mathcal{U})$ is \textbf{precompact} if for all $U \in \mathcal{U}$, there are $a_1, \dots, a_n \in A$ with $\cup_{m = 1}^n U[a_m] \supseteq A$.

The \textbf{pseudocharacter} of a topological space $X$ at a point $x \in X$ is countable, if there is a countable set of open sets whose intersection is equal to $\{ x \}$.
This \zcref[noref]{lem:CountabilityAndPseudocharacter} is probably known in the literature, but the author has failed to find any such reference.
\begin{lemma}
\label{lem:CountabilityAndPseudocharacter}
Let $X$ be a countably compact, regular topological space, $(x_n)_{n \in \mathbb{N}}$ a sequence in $X$ with some $x \in X$ as cluster point.
If the pseudocharacter of $X$ at $x$ is countable, there is a subsequence converging to $x$.
\end{lemma}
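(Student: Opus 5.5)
Write $\{x\} = \bigcap_{k \in \mathbb{N}} U_k$ with each $U_k$ open. The plan is to shrink these sets using regularity, pick a subsequence that enters ever smaller closed neighbourhoods, and then use countable compactness to force convergence to $x$.

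\emph{Step 1 (shrinking).} By regularity we choose inductively open neighbourhoods $V_k$ of $x$ with
\begin{equation}
\mathrm{cl}\, V_{k+1} \subseteq V_k \cap U_{k+1}, \qquad \mathrm{cl}\, V_1 \subseteq U_1 .
\end{equation}
The sets $\mathrm{cl}\, V_k$ are then decreasing, and
\begin{equation}
\bigcap_k \mathrm{cl}\, V_k = \bigcap_k V_k = \{x\} .
\end{equation}

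\emph{Step 2 (the subsequence).} Since $x$ is a cluster point of $(x_n)$, every neighbourhood of $x$ contains $x_n$ for infinitely many $n$. We therefore pick indices $n_1 < n_2 < \dots$ with $x_{n_k} \in V_k$.

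\emph{Step 3 (convergence).} Suppose $x_{n_k} \not\to x$. Then some open neighbourhood $W$ of $x$ misses $x_{n_k}$ for infinitely many $k$, which gives a further subsequence $(y_j)$ lying in the closed set $X \setminus W$. We now locate a cluster point of $(y_j)$.
\begin{itemize}
\item By countable compactness, $(y_j)$ has a cluster point $y \in X$. Countable compactness as defined here is formulated with countable covers, but for sequences the equivalence stated in the corollary above (applied with $A = X$) gives this.
\item Since $X \setminus W$ is closed and contains all $y_j$, we have $y \in X \setminus W$.
\item For each $k$, all but finitely many $y_j$ lie in $V_k \subseteq \mathrm{cl}\, V_k$, because $y_j = x_{n_{k_j}}$ with $k_j \ge k$ eventually and the $V_k$ decrease. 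A cluster point of a sequence that is eventually in a closed set lies in that set, so $y \in \mathrm{cl}\, V_k$ for every $k$.
\end{itemize}
Hence $y \in \bigcap_k \mathrm{cl}\, V_k = \{x\}$, that is $y = x \in W$, contradicting $y \notin W$. So $x_{n_k} \to x$.

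The main obstacle is that $X$ is not assumed Hausdorff, so the intersection $\bigcap_k \mathrm{cl}\, U_k$ of the closures of the original sets might be strictly larger than $\{x\}$. This is exactly why Step 1 uses regularity to nest the closures inside the $U_k$. Beyond that, one only needs to check that the argument never uses uniqueness of limits, and it does not.
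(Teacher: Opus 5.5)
Your proof is correct and follows essentially the same route as the paper: you use regularity to get a decreasing sequence of closed neighbourhoods of $x$ nested inside the $U_k$, extract a subsequence entering them, and use countable compactness to show that any cluster point of a further subsequence avoiding an open neighbourhood $W$ of $x$ must equal $x$. The only difference is organisational. The paper first proves that $x$ is the unique cluster point of the extracted subsequence and then derives the contradiction, whereas you merge these into one step; you also rightly insist on strictly increasing indices $n_1<n_2<\cdots$, where the paper only writes $n(m+1)\ge n(m)$.
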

\begin{proof}
Let $(U_n)_{n \in \mathbb{N}}$ be a decreasing sequence of open sets with $\cap_{n \in \mathbb{N}} U_n = \{ x \}$.
Since $X$ is regular, there is a decreasing sequence $(V_n)_{n \in \mathbb{N}}$ of closed neighbourhoods of $x$ with $V_n \subseteq U_n$.
Now, since $x$ is a cluster point of $(x_n)_{n \in \mathbb{N}}$, there is some $n(1) \in \mathbb{N}$ with $x_{n(1)} \in V_1$.
Inductively, for every $m \in \mathbb{N}$, one finds $n(m+1)$ with
\begin{equation}
n \left( m + 1 \right) \ge n \left( m \right)
\qquad\text{and}\qquad
a_{n (m+1)} \in V_{m + 1} \, .
\end{equation}
Since $X$ is countably compact, the sequence $(x_{n(m)})_{m \in \mathbb{N}}$ has a cluster point $y \in X$.
However, since $(V_m)_{m \in \mathbb{N}}$ is decreasing, the sequence is eventually contained in every $V_m$, such that
\begin{equation}
y \in \bigcap_{m \in \mathbb{N}} V_m \subseteq \bigcap_{m \in \mathbb{N}} U_m = \left\{ x \right\} \, .
\end{equation}
Consequently, $(x_{n(m)})_{m \in \mathbb{N}}$ has precisely one cluster point, namely $x$.
Suppose now, that $(x_{n(m)})_{m \in \mathbb{N}}$ does not converge to $x$.
Then, there is a neighbourhood $U$ of $x$ and a subsequence $(y_n)_{n \in \mathbb{N}}$ lying entirely in $X \setminus U$.
By the countable compactness, $(y_n)_{n \in \mathbb{N}}$ has a cluster point $y \in X \setminus U$.
However, that cluster point is necessarily also a cluster point of $(x_{n(m)})_{m \in \mathbb{N}}$ such that $y = x$ providing a contradiction.
Hence, $(x_{n(m)})_{m \in \mathbb{N}}$ converges to $x$.
\end{proof}
\subsection{Asymmetric Locally Convex Spaces}
Vector spaces and cones are always considered over the field of real numbers.
A function $p : X \to [0, \infty)$ on a vector space or a cone $X$ is an \textbf{asymmetric seminorm}, if
\begin{equation}
p \left( r x \right) = r p \left( x \right)
\qquad
p \left( x + y \right) \le p \left( x \right) + p \left( y \right) \, ,
\end{equation}
for all $r \ge 0$ and all $x, y \in X$.
Note that this deviates from the asymmetric seminorms on cones considered in \cite[Section 2.3.3]{src:Cobzaş:FunctionalAnalysisinAsymmetricNormedSpaces}, which include a form of definiteness as well.
Since seminorms on vector spaces lack a definiteness condition, the same term will be used for similar functions on cones in this paper.
Given a vector space or a cone $X$, a family of asymmetric seminorms $\mathcal{P}$ on $X$ induces a topology $\tau_{\mathcal{P}}$ on $X$ with a basis given by sets of the form
\begin{equation}
U_{x, F, \epsilon}
=
\left\{ y \in X \middle| \exists \Delta \in X : y = x + \Delta \text{ and } \forall p \in F : p \left( \Delta \right) < \epsilon \right\} \, ,
\end{equation}
for every $x \in X$, $\epsilon > 0$ and every finite subset $F$ of $\mathcal{P}$.
A vector space $X$ equipped with this topology is an \textbf{asymmetric locally convex space} and the addition is continuous \cite[Section 1.1.7]{src:Cobzaş:FunctionalAnalysisinAsymmetricNormedSpaces}.
The topology is $T_0$ if and only if, for every $x \in X \setminus \{ 0 \}$, there is some $p \in \mathcal{P}$ with $p(x) > 0$ or $p(-x) > 0$ \cite[Proposition 1.1.63]{src:Cobzaş:FunctionalAnalysisinAsymmetricNormedSpaces}.
Moreover, every $p \in \mathcal{P}$ is clearly upper semicontinuous with respect to $\tau_{\mathcal{P}}$.
There is also an associated quasi-uniformity $\mathcal{U}$ inducing $\tau_{\mathcal{P}}$, defined as the smallest such object containing all sets of the form
\begin{equation}
V_{F, \epsilon}
=
\left\{ \left( x, y \right) \in X \times X \middle|
\forall p \in F : p \left( y - x \right) < \epsilon
\right\}
\end{equation}
for $F$ and $\epsilon$ as above \cite[Section 1.1.7]{src:Cobzaş:FunctionalAnalysisinAsymmetricNormedSpaces}.
Equivalently, just as for an abelian topological group, $\mathcal{U}$ is generated by
\begin{equation}
\left\{ \left( x, x + U \right) \middle| x \in X, U \subseteq X \text{ is a neighbourhood of the origin} \right\} \, .
\end{equation}
Consequently, we may speak of the canonical quasi-uniformity of an asymmetric locally convex space without referring to its generating seminorms.
The \textbf{conjugate} family of asymmetric seminorms is defined as $\overline{\mathcal{P}} = \{ \overline{p} : p \in \mathcal{P} \}$, where $\overline{p}(x) = p(-x)$ for all $x \in X$.
Then, $(X, \overline{\mathcal{P}})$ is the conjugate asymmetric locally convex space and the associated quasi-uniformity is given by $\overline{U} = \{ U^{-1} : U \in \mathcal{U} \}$.
An asymmetric seminorm $p$ on $X$ for which
\begin{equation}
\forall x \in X : p(x) = 0 \text{ and } p(-x) = 0 \implies x = 0 \, ,
\end{equation}
is an \textbf{asymmetric norm} \cite[Section 1.1.1]{src:Cobzaş:FunctionalAnalysisinAsymmetricNormedSpaces}.
If $\mathcal{P}$ consists of a single asymmetric norm $p$, $(X, \mathcal{P})$ is written as $(X, p)$ and is called an \textbf{asymmetrically normed space}.
To an asymmetric locally convex space $X$, we associate the \textbf{dual space} $X^*$ given as the collection of all linear and upper semi-continuous, real-valued functions on $X$.
It is clearly a cancellative cone.
Note that every locally convex space is an asymmetric locally convex space and in that setting, the dual space corresponds to the usual definition.
A subset $H \subseteq X^*$ is \textbf{equicontinuous} if for each $\epsilon > 0$, there is a neighbourhood $U \subseteq X$ of the origin, such that
\begin{equation}
\forall \phi \in H \forall x \in U : \phi \left( x \right) < \epsilon \, .
\end{equation}
Again, in the ordinary locally convex setting, this coincides with the usual definition with $\vert \phi(x) \vert < \epsilon$, because every neighbourhood of zero contains a balanced neighbourhood of zero.
\section{Asymmetric Dual Pairs}
\label{sec:AsymmetricDualPairs}
\begin{definition}
A tuple $(X, Y, b)$ consisting of a vector space $X$, a cone $Y$ and a bilinear (with respect to nonnegative scalars in the second variable) mapping $b : X \times Y \to \mathbb{R}$ that is \textbf{nondegenerate} in the sense that for all $x \in X$ and all $y, z \in Y$,
\begin{equation}
b(x, \cdot) = 0 \implies x = 0
\quad\text{and}\quad
b( \cdot , y) = b( \cdot , z)  \implies y = z,
\end{equation}
is an \textbf{asymmetric dual pair}.
We shall use the abbreviation $\langle \cdot, \cdot \rangle$ instead of $b$ and also simply write that $X$ and $Y$ are \textbf{in duality}.
\end{definition}
Obviously, every ordinary dual pair is an asymmetric dual pair and an asymmetric dual pair is an ordinary dual pair precisely, when both spaces are vector spaces.
For the remainder of the section, we fix some $X$ and $Y$ in duality.
\begin{corollary}
$Y$ is cancellative.
\end{corollary}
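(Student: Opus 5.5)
The plan is to verify cancellativity directly from the nondegeneracy condition in the second variable, using additivity of $\langle \cdot, \cdot \rangle$ in $Y$. Recall that a cone $Y$ is cancellative if for all $y, z, w \in Y$ the equality $y + w = z + w$ implies $y = z$.

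Fix $y, z, w \in Y$ with $y + w = z + w$. For an arbitrary $x \in X$, bilinearity in the second variable (additivity is part of bilinearity with respect to nonnegative scalars) gives
\begin{equation}
\langle x, y \rangle + \langle x, w \rangle
=
\langle x, y + w \rangle
=
\langle x, z + w \rangle
=
\langle x, z \rangle + \langle x, w \rangle \, .
\end{equation}
These are real numbers, and $\mathbb{R}$ is cancellative, so we may subtract $\langle x, w \rangle$ and obtain $\langle x, y \rangle = \langle x, z \rangle$. Since $x \in X$ was arbitrary, $\langle \cdot, y \rangle = \langle \cdot, z \rangle$ as functions on $X$. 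The nondegeneracy assumption $b(\cdot, y) = b(\cdot, z) \implies y = z$ then yields $y = z$.

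No step here is a genuine obstacle. The only point needing care is that cancellation is carried out in $\mathbb{R}$, where it is available, rather than in $Y$. This is exactly why the nondegeneracy condition in the second variable is phrased as injectivity of $y \mapsto \langle \cdot, y \rangle$ rather than as the implication $\langle \cdot, y \rangle = 0 \implies y = 0$: the latter would not suffice, because a cone need not have additive inverses.
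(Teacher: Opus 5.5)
Your proposal is correct and follows the paper's proof essentially verbatim: use additivity of $\langle x, \cdot \rangle$ to cancel $\langle x, w \rangle$ in $\mathbb{R}$ for every $x \in X$, then invoke nondegeneracy in the second variable to conclude $y = z$. Your closing remark, that nondegeneracy must be phrased as injectivity of $y \mapsto \langle \cdot, y \rangle$ because a cone lacks additive inverses, is accurate. The paper uses the condition in exactly that form.
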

\begin{proof}
Let $a, b, c \in Y$ and suppose that $a + c = b + c$.
Then, since $\langle \cdot, \cdot \rangle$ is bilinear,
\begin{equation}
\left< x, a \right> 
=
\left< x, a + c \right> - \left< x, c \right>
=
\left< x, b + c \right> - \left< x, c \right>
=
\left< x, b \right> \, ,
\end{equation}
for every $x \in X$.
Since $\langle \cdot, \cdot \rangle$ is nondegenerate, $a = b$ follows.
\end{proof}
Since $Y$ is cancellative, it can be embedded in a vector space.
Specifically, we set $\widetilde{Y} = \mathrm{span}\, \{ \langle \cdot, y \rangle : y \in Y \}$ and note that $X$ and $\widetilde{Y}$ are a dual pair of vector spaces in the obvious way.
Consequently, we may consider the \textbf{annihilator} of a subset $A \subset X$ defined as
\begin{equation}
\mathrm{ann} A
=
\left\{ y \in \widetilde{Y} \middle| \forall x \in A : \left< x, y \right> = 0 \right\} \, .
\end{equation}
\begin{proposition}
\label{prop:WeakTop}
\begin{equation}
p_y : X \to [0, \infty), x \mapsto \max \left\{ \left\langle x, y \right\rangle, 0 \right\}
\end{equation}
is an asymmetric seminorm.
Furthermore, the collection $\mathcal{P} = \{ p_y : y \in Y \}$ turns $(X, \mathcal{P})$ into a $T_0$ asymmetric locally convex space.
If $Y$ is a vector space, then $(X, \mathcal{P})$ is a Hausdorff, locally convex space.
There is also a natural partial order $\le$ on $X$, defined as
\begin{equation}
x \le x' \iff \forall y \in Y : \left< x, y \right> \le \left< x', y \right>
\end{equation}
for all $x, x' \in X$.
It coincides with the specialization order on $(X, \mathcal{P})$, i.e. $x \le x' \iff x' \in \mathrm{cl}\, \{ x \}$.
In particular, if $(X, \mathcal{P})$ is $T_1$, $\le$ is simply the diagonal of $X \times X$.
\end{proposition}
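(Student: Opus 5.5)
We verify the claims of \zcref{prop:WeakTop} in the order stated, using only the definitions and the standard facts about $\tau_{\mathcal{P}}$ recalled in the preliminaries.

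\emph{Seminorm property.} For $r \ge 0$ we have $p_y(rx) = \max\{r\langle x,y\rangle,0\} = r\,p_y(x)$. Subadditivity follows from linearity of $\langle\cdot,y\rangle$ in the first variable together with $\max\{a+b,0\} \le \max\{a,0\}+\max\{b,0\}$.

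\emph{$T_0$ property.} We use the criterion cited from Cobzaş: it suffices to find, for $x \neq 0$, some $y \in Y$ with $p_y(x)>0$ or $p_y(-x)>0$. By nondegeneracy, $\langle x,\cdot\rangle \neq 0$ on $Y$, so some $y$ has $\langle x,y\rangle \neq 0$. Then either $p_y(x)>0$ or $p_y(-x)>0$, according to the sign.

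\emph{Symmetric case.} If $Y$ is a vector space, then $-y \in Y$ and $p_{-y} = \overline{p_y}$. Hence $\mathcal{P}$ is closed under conjugation, and for each $y$ the finite family $\{p_y,p_{-y}\}$ yields the symmetric seminorm $\max\{p_y,p_{-y}\} = |\langle\cdot,y\rangle|$. The basic sets $U_{x,F,\epsilon}$ with $F$ closed under conjugation are then exactly the usual weak neighbourhoods $\{x+\Delta : |\langle\Delta,y\rangle|<\epsilon\ \forall y\in G\}$, and these sets are cofinal among all basic sets. So $\tau_{\mathcal{P}} = \sigma(X,Y)$. This topology is locally convex, and it is Hausdorff by nondegeneracy.

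\emph{Partial order.} Reflexivity and transitivity of $\le$ are clear. For antisymmetry, if $x \le x'$ and $x' \le x$, then $\langle x-x',\cdot\rangle = 0$ on $Y$, so $x=x'$ by nondegeneracy.

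\emph{Specialization order.} We show $x \le x' \iff x' \in \mathrm{cl}\{x\}$. Membership $x'\in\mathrm{cl}\{x\}$ means that every basic neighbourhood $U_{x',F,\epsilon}$ contains $x$, i.e. writing $x = x'+\Delta$ with $\Delta = x - x'$, we need $p_y(x-x')<\epsilon$ for all finite $F$ and all $\epsilon>0$. This is equivalent to $p_y(x-x')=0$ for all $y$, i.e. $\langle x,y\rangle \le \langle x',y\rangle$ for all $y$, which is exactly $x\le x'$. This is the step that needs care, namely getting the direction of the translation $\Delta$ in $U_{x,F,\epsilon}$ right, since the asymmetry makes it easy to flip the order. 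We check it on a single coordinate: the neighbourhoods of $x'$ are $\{z : \langle z - x', y\rangle < \epsilon\}$, and these are upward-unbounded half-spaces in the direction of $y$, so a point $x$ with smaller pairings lies in all of them. This confirms the direction.

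\emph{$T_1$ case.} If $(X,\mathcal{P})$ is $T_1$, then $\mathrm{cl}\{x\}=\{x\}$, so $x\le x'$ forces $x'=x$. Hence $\le$ is the diagonal of $X \times X$.
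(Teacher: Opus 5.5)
Your proposal is correct and follows essentially the same route as the paper: nondegeneracy for $T_0$ via the Cobza\c{s} criterion, symmetry of $\mathcal{P}$ under $y \mapsto -y$ for the Hausdorff locally convex case, and the computation $x' \in \mathrm{cl}\,\{x\} \iff p_y(x-x') = 0$ for all $y \in Y$ for the specialization order, with your direction of translation matching the paper's. One small wording slip in your sanity check: on the neighbourhoods $\{z : \langle z - x', y\rangle < \epsilon\}$ the functional $\langle \cdot, y\rangle$ is bounded \emph{above} by $\langle x', y\rangle + \epsilon$ and unbounded below, so they are not ``upward-unbounded'' in the direction of $y$; your conclusion that points with smaller pairings lie in all of them is nonetheless right.
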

\begin{proof}
That $p_y$ is an asymmetric seminorm is obvious, such that $(X, \mathcal{P})$ is an asymmetric locally convex space.
Suppose $x \in X$ such that for all $y \in Y$, $p_y(x) = 0$ and $p_y(-x) = 0$.
Then, $\langle x, y \rangle = 0$ for all $y \in Y$ such that $x = 0$ by assumption and thus $(X, \mathcal{P})$ is $T_0$.
If $Y$ is a vector space, it follows that $(X, \mathcal{P})$ is a $T_0$ locally convex space and thus Hausdorff.
It is easy to see that $\le$ is a partial order.
Moreover, for all $x, x' \in X$,
\begin{equation}
x' \in \mathrm{cl}\, \{ x \}
\iff
\forall \epsilon > 0 \forall y \in Y : p_y \left( x - x' \right) < \epsilon
\iff
\forall y \in Y : \left< x, y \right> \le \left< x', y \right> \, .
\end{equation}
\end{proof}
The corresponding topology on $X$ will be denoted by $\sigma(X, Y)$ in accordance with the usual notation for dual pairs.
In addition, we shall denote the conjugate topology by $\overline{\sigma}(X, Y)$ and the smallest topology containing both of these by $\sigma_s(X, Y)$.
There is of course the corresponding dual topology as well.
\begin{proposition}[{\cite[Proposition 2.4.1]{src:Cobzaş:FunctionalAnalysisinAsymmetricNormedSpaces}}]
\label{prop:WeakStarTop}
\begin{equation}
q_x : Y \to [0, \infty), y \mapsto \max \left\{ \left\langle x, y \right\rangle, 0 \right\}
\end{equation}
is an asymmetric seminorm.
The topology $\sigma(Y, X)$ generated by $\mathcal{Q} = \{ q_x : x \in X \}$ is the topology of pointwise convergence, i.e. a net $(y_\alpha)_{\alpha \in I}$ in $Y$ $\sigma(Y, X)$-converges to $y \in Y$ if and only if
\begin{equation}
\forall x \in X : \lim_\alpha \left< x, y_\alpha \right> = \left< x, y \right> \, .
\end{equation}
\end{proposition}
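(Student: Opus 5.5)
Both claims follow by unwinding the definitions, so I will verify them in turn. First I check that $q_x$ is an asymmetric seminorm on the cone $Y$. It takes values in $[0,\infty)$ by construction. For $r \ge 0$, bilinearity in the second variable gives $\langle x, ry\rangle = r\langle x,y\rangle$, so $\max\{r\langle x,y\rangle,0\} = r\max\{\langle x,y\rangle,0\}$. For subadditivity, $\langle x, y+z\rangle = \langle x,y\rangle + \langle x,z\rangle$, and $\max\{a+b,0\}\le \max\{a,0\}+\max\{b,0\}$ for real $a,b$. This is the same computation as for $p_y$ in \zcref{prop:WeakTop}.

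For the characterisation of convergence, I describe the basic neighbourhoods explicitly. Since $Y$ is a cone and not a group, I use the basis $U_{y,F,\epsilon}$. A point $z \in Y$ lies in $U_{y,F,\epsilon}$ if and only if $z = y + \Delta$ for some $\Delta \in Y$ with $\langle x,\Delta\rangle < \epsilon$ for all $x \in F$.

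The key observation is that $-x \in X$ whenever $x \in X$, because $X$ is a vector space. So $F$ may be enlarged to $F \cup (-F)$, and then $\vert\langle x,\Delta\rangle\vert < \epsilon$ for every $x \in F$. Since $\langle x, z\rangle - \langle x,y\rangle = \langle x,\Delta\rangle$, membership in the enlarged neighbourhood gives $\vert \langle x,z\rangle - \langle x,y\rangle\vert<\epsilon$. Hence $\sigma(Y,X)$-convergence $y_\alpha \to y$ implies $\langle x,y_\alpha\rangle \to \langle x,y\rangle$ for every $x$.

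For the converse, suppose $\langle x,y_\alpha\rangle\to\langle x,y\rangle$ for all $x \in X$. The difficulty is the requirement that $y_\alpha = y+\Delta_\alpha$ with $\Delta_\alpha \in Y$. Pointwise closeness alone does not produce such a $\Delta_\alpha$ inside the cone; in general it only exists in $\widetilde{Y}$.

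This is the step I expect to be the main obstacle. The statement is cited from Cobzaş \cite[Proposition 2.4.1]{src:Cobzaş:FunctionalAnalysisinAsymmetricNormedSpaces}, where the neighbourhoods are understood through the quasi-uniformity, that is, through the sets $V_{F,\epsilon}$ applied to differences. I would therefore interpret the topology on $Y$ through the basis sets $\{z \in Y : q_x(z-y) < \epsilon \ \forall x\in F\}$, with the difference $z-y$ taken in $\widetilde{Y}$, which makes sense since $Y$ is cancellative and embeds in $\widetilde{Y}$. With this interpretation the converse is immediate: for finite $F$ and $\epsilon>0$, there is an index eventually beyond which $\langle x, y_\alpha - y\rangle < \epsilon$ for all $x \in F$.

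Finally, I will note that the symmetrisation through $\pm x$ makes this topology coincide with the trace on $Y$ of $\sigma(\widetilde{Y},X)$. That trace is the topology of pointwise convergence, which completes the identification.
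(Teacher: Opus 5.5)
Your argument is correct, and since the paper gives no proof beyond the citation, a direct verification like yours is essentially the intended argument. Under the reading you adopt (differences formed in $\widetilde{Y}$, which is also the reading behind the paper's later remark that $\sigma(Y,X)$ is the trace of $\sigma(\widetilde{Y},X)$), both directions reduce to your $\pm x$ symmetrisation, so run the forward direction with that same difference-based basis rather than with $U_{y,F,\epsilon}$.

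Your hesitation about the literal cone basis is well founded, and the reinterpretation is forced rather than a convenience. If $\Delta$ must lie in $Y$, then already $X=\mathbb{R}$, $Y=[0,\infty)$, $\langle x,y\rangle=xy$ gives the neighbourhood base $[y,y+\delta)$ at $y$. So $1-1/n\to 1$ pointwise but not in that topology, and in the section on $C_a(K)$ the set $\widehat{K}$ would even be discrete.
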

Consequently, $\sigma(Y, X)$ coincides with the subspace topology induced by $\sigma(\widetilde{Y}, X)$ and the annihilator of a set is necessarily $\sigma(\widetilde{Y}, X)$-closed.
The dual space is well-behaved with respect to $\sigma(X, Y)$.
\begin{theorem}
$(X, \sigma(X, Y))^*$ is isomorphic to $Y$ in the sense that
\begin{enumerate}
\item for every $\phi \in (X, \sigma(X, Y))^*$, there is a unique $y \in Y$ such that $\phi = \langle \cdot, y \rangle$,
\item for every $y \in Y$, $\langle \cdot, y \rangle \in (X, \sigma(X, Y))^*$.
\end{enumerate}
\end{theorem}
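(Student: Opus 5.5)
Part (ii) is immediate: for $y \in Y$, the map $\langle \cdot, y \rangle$ is linear. If $x \in X$ and $\epsilon > 0$, then every $x'$ in the basic neighbourhood $U_{x, \{p_y\}, \epsilon}$ has the form $x' = x + \Delta$ with $p_y(\Delta) < \epsilon$. Hence $\langle x', y \rangle = \langle x, y \rangle + \langle \Delta, y \rangle \le \langle x, y \rangle + p_y(\Delta) < \langle x, y \rangle + \epsilon$, which is upper semicontinuity at $x$. Uniqueness in (i) is the nondegeneracy assumption in the second variable: $\langle \cdot, y \rangle = \langle \cdot, z \rangle$ forces $y = z$.

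For existence in (i), let $\phi \in (X, \sigma(X, Y))^*$. First I would show that $\phi$ is bounded above near the origin. By upper semicontinuity at $0$ and $\phi(0) = 0$, the set $\{ \phi < 1 \}$ is a neighbourhood of $0$, so it contains some basic set $U_{0, F, \epsilon}$ with $F = \{ p_{y_1}, \dots, p_{y_n} \}$. By positive homogeneity this gives
\begin{equation}
\phi(x) \le \tfrac{1}{\epsilon} \max_{1 \le m \le n} p_{y_m}(x) \quad \text{for all } x \in X .
\end{equation}
Indeed, if $\max_m p_{y_m}(x) = 0$, then $t x \in U_{0,F,\epsilon}$ for every $t > 0$, so $\phi(x) < 1/t$ for all $t$ and hence $\phi(x) \le 0$. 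Otherwise, scale $x$ so that the maximum equals $\epsilon / 2$, say, and let the constant tend to the limit.

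Next I would turn this into a sublinear domination. Set $s(x) = \max_m \max\{ \langle x, y_m \rangle, 0 \}$; this is a sublinear functional and $\phi \le C s$. Consider the linear map $T : X \to \mathbb{R}^n$, $x \mapsto (\langle x, y_m \rangle)_m$. If $T x \le 0$ componentwise, then $\phi(x) \le 0$. So $\phi$ vanishes on $\ker T$ and factors as $\phi = \psi \circ T$ with $\psi$ linear on $T(X)$, and $\psi(v) \le 0$ whenever $v \in T(X)$ with $v \le 0$. Since $\psi$ is dominated by $C \max_m \max\{ v_m, 0 \}$ on the subspace $T(X)$, Hahn--Banach extends $\psi$ to a linear functional on $\mathbb{R}^n$ with the same domination. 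Such a functional is $v \mapsto \sum_m \lambda_m v_m$, and the domination forces $\lambda_m \ge 0$: test $v = -e_m$, which gives $-\lambda_m \le 0$. Hence $\phi = \sum_m \lambda_m \langle \cdot, y_m \rangle = \langle \cdot, y \rangle$ with $y = \sum_m \lambda_m y_m \in Y$, because $Y$ is a cone and the pairing is bilinear for nonnegative scalars.

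I expect the main obstacle to be this nonnegativity of the coefficients. The symmetric argument (kernel inclusion giving a linear combination) only yields real coefficients, which need not stay in the cone $Y$. The Hahn--Banach step with the one-sided dominating functional $s$ is exactly what forces positivity. The alternative route would be a Farkas-type lemma applied to the cone condition $T x \le 0 \Rightarrow \phi(x) \le 0$, but I would use the Hahn--Banach version.
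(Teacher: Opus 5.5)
Your proof is correct. It differs from the paper's in the existence part of (i).

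The paper gets the domination $\phi \le C\max_m p_{y_m}$ by citing a continuity criterion from Cobza\c{s} (Proposition 2.1.11, also used there for (ii)). It then notes that $\sup_{y\in K}\langle x,y\rangle\le 1$ implies $\phi(x)\le 1$ for the finite, hence $\sigma(Y,X)$-compact, set $K=\{Cy_1,\dots,Cy_n\}$. From this it produces $y\in Y$ by invoking Theorem 3.8 of Keimel--Roth, a bipolar/representation result for locally convex cones.

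You instead derive the domination by hand from a basic neighbourhood of $0$ and positive homogeneity. You then factor $\phi$ through the finite-dimensional map $T$ and apply the ordinary Hahn--Banach theorem on $\mathbb{R}^n$ with the one-sided sublinear functional $C\max_m\max\{v_m,0\}$. Testing the extension on $-e_m$ is exactly what forces the coefficients into $[0,\infty)$, so that $y=\sum_m\lambda_m y_m$ stays in the cone.

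Your route is elementary and self-contained, and it gives slightly more explicit information. Testing on $(1,\dots,1)$ also gives $\sum_m\lambda_m\le C$, so $y\in C\,\mathrm{conv}\{0,y_1,\dots,y_n\}$. This is precisely what the cone-theoretic bipolar result delivers abstractly. The paper's route is shorter on the page and fits its later use of polars of compact subsets of $Y$, but it rests on the heavier machinery of ordered cones.

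One small imprecision: scaling $x$ so that the maximum equals $\epsilon/2$ yields the constant $2/\epsilon$, not $1/\epsilon$. To get $1/\epsilon$, scale to $\delta\epsilon$ and let $\delta\to 1$. This is harmless, since any finite constant suffices for the rest of the argument.
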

\begin{proof}
$(ii)$:
Clearly, $\langle \cdot, y \rangle \le p_y$ such that the claim follows from \cite[Proposition 2.1.11]{src:Cobzaş:FunctionalAnalysisinAsymmetricNormedSpaces}.

$(i)$:
Since $\phi$ is continuous on the asymmetrically normed space $(X, \sigma(X, Y))^*$, there are $y_1, \dots, y_n$ and some $C > 0$ with
\begin{equation}
\phi \left( x \right)
\le
C
\max \left\{ p_{y_1}\left( x \right), \dots, p_{y_n}\left( x \right) \right\}
=
C \max \left\{ \left< x, y_1 \right>, \dots, \left< x, y_n \right>, 0 \right\}  \, ,
\end{equation}
for all $x \in X$ \cite[Proposition 2.1.11]{src:Cobzaş:FunctionalAnalysisinAsymmetricNormedSpaces}.
The set $K = \{ C y_1, \dots, C y_n \}$ is clearly $\sigma(Y, X)$-compact and
\begin{equation}
\forall x \in X:
\sup_{y \in K} \left< x, y \right> \le 1
\implies
\phi \left( x \right) \le 1 \, .
\end{equation}
Hence, the existence of the sought $y \in Y$ follows from \cite[Theorem 3.8]{src:KeimelRoth:OrderedConesAndApproximation}.
The uniqueness is clear from the nondegeneracy of $\langle \cdot, \cdot \rangle$. 
\end{proof}
Consequently, we shall use the identification $Y \simeq (X, \sigma(X, Y))^*$ from here on.
With that in mind, it is clear that $\sigma(X, Y)$ coincides with the \textbf{weak topology} of the asymmetric locally convex space $(X, \sigma(X, Y))$ as defined in \cite[Section 2.4.6]{src:Cobzaş:FunctionalAnalysisinAsymmetricNormedSpaces}.
In particular, a net $(x_\alpha)_{\alpha \in I}$ $\sigma(X, Y)$-converges to $x \in X$ if and only if $\limsup_{\alpha} \langle x_\alpha, y \rangle \le \langle x, y \rangle$ for all $y \in Y$.
\begin{corollary}
\label{cor:XandXDualAreInDuality}
Let $X$ be a $T_0$ asymmetric locally convex space.
Then, $X$ and $X^*$ are in duality and $\sigma(X, X^*)$ is the weakest $T_0$ asymmetric locally convex topology on $X$ preserving the dual space $X^*$.
\end{corollary}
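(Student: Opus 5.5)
The plan is to verify the two defining conditions of an asymmetric dual pair for the pairing $\langle x, \phi \rangle = \phi(x)$ on $X \times X^*$. Then I would show that $\sigma(X, X^*)$ is coarser than any $T_0$ asymmetric locally convex topology with the same dual. Bilinearity is immediate: $X^*$ is a cone of linear functionals, so the pairing is linear in $x$ and additive and positively homogeneous in $\phi$.

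For nondegeneracy, the condition in the second variable is trivial. If $\langle \cdot, \phi \rangle = \langle \cdot, \psi \rangle$ then $\phi = \psi$ as functions, hence as elements of $X^*$. The real content is the first condition. Let $x \neq 0$ and let $\mathcal{P}$ be a generating family of asymmetric seminorms. Since $X$ is $T_0$, the characterisation cited in the preliminaries \cite[Proposition 1.1.63]{src:Cobzaş:FunctionalAnalysisinAsymmetricNormedSpaces} gives some $p \in \mathcal{P}$ with $p(x) > 0$ or $p(-x) > 0$. Replacing $x$ by $-x$ if necessary, assume $p(x) > 0$.

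I would then apply the Hahn--Banach theorem for sublinear functionals to the linear functional $tx \mapsto t p(x)$ on $\mathrm{span}\{x\}$. It is dominated by $p$: for $t \ge 0$ this is equality, and for $t < 0$ the left side is negative while $p \ge 0$. This yields a linear $\phi$ on $X$ with $\phi \le p$ and $\phi(x) = p(x) > 0$. By \cite[Proposition 2.1.11]{src:Cobzaş:FunctionalAnalysisinAsymmetricNormedSpaces}, a linear functional dominated by a continuous asymmetric seminorm is upper semicontinuous, so $\phi \in X^*$. This is the same reasoning already used in part (ii) of the preceding theorem. Hence $\langle x, \cdot \rangle \neq 0$, and $X$ and $X^*$ are in duality. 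This Hahn--Banach step is the only nontrivial ingredient, and I expect it to be the main point to check carefully.

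For the minimality statement, \zcref{prop:WeakTop} shows that $\sigma(X, X^*)$ is a $T_0$ asymmetric locally convex topology. The preceding theorem shows that its dual is exactly $X^*$ under the identification $\phi \mapsto \langle \cdot, \phi \rangle$, so it does preserve the dual. Now let $\tau$ be any $T_0$ asymmetric locally convex topology on $X$ whose dual is $X^*$. Every $\phi \in X^*$ is then $\tau$-upper semicontinuous, so each set $\{\Delta : \phi(\Delta) < \epsilon\} = \{\Delta : p_\phi(\Delta) < \epsilon\}$ is $\tau$-open. The same then holds for finite intersections of such sets. Translations are $\tau$-homeomorphisms because addition is continuous, so every basic set $U_{x, F, \epsilon}$ of $\sigma(X, X^*)$ is $\tau$-open. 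Therefore $\sigma(X, X^*) \subseteq \tau$.
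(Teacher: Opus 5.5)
Your proposal is correct and follows the paper's route. Nondegeneracy in the second variable is immediate, and nondegeneracy in the first comes from the $T_0$ characterisation plus Hahn--Banach, which you unpack into the algebraic extension dominated by $p$ and the upper semicontinuity criterion; the minimality claim, which the paper calls obvious, you spell out correctly via the preceding theorem and the $\tau$-openness of the sets $\{\Delta : \phi(\Delta) < \epsilon\}$.
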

\begin{proof}
We just need to prove that $X$ and $X^*$ are in duality, then the rest is obvious.
If $\phi, \psi \in X^*$ with $\phi(x) = \psi(x)$ for all $x \in X$, then $\phi = \psi$ by definition.
Conversely, suppose that $x \in X$ with $\phi(x) = 0$ for all $\phi \in X^*$.
If $x \neq 0$, since $X$ is $T_0$, there is some asymmetric seminorm $p$ defining the topology with $p(x) > 0$ or $p(-x) < 0$.
Consequently, the Hahn-Banach theorem \cite[Theorem 2.2.2]{src:Cobzaş:FunctionalAnalysisinAsymmetricNormedSpaces} gives a linear functional $\psi \in X^*$ with $\psi(x) \neq 0$, providing a contradiction.
\end{proof}
Similarly, $\sigma(Y, X)$ coincides with the \enquote{$w^\flat$}-topology defined in \cite[Section 2.4.6]{src:Cobzaş:FunctionalAnalysisinAsymmetricNormedSpaces} and thus deserves the name \textbf{weak-$\ast$ topology}.
Call a set $A \subseteq X$ \textbf{$\sigma(X,Y)$-bounded} respectively $B \subseteq Y$ \textbf{$\sigma(Y,X)$-bounded} if
\begin{equation}
\forall y \in Y : \sup_{x \in A} \left< x, y \right> < \infty
\quad\text{respectively}\quad
\forall x \in X : \sup_{y \in B} \left< x, y \right> < \infty \, .
\end{equation}
It is easy to see that this boundedness coincides with the boundedness in ordinary dual pairs.
\begin{proposition}
Let $A \subseteq X$ and $B \subseteq Y$.
Then the function
\begin{equation}
p_B : X \to [0, \infty], x \mapsto \sup_{y \in B} p_y \left( x \right)
\end{equation}
is an asymmetric seminorm if and only if $B$ is $\sigma(Y, X)$-bounded.
Similarly,
\begin{equation}
q_A : Y \to [0, \infty], y \mapsto \sup_{x \in A} q_x \left( y \right)
\end{equation}
is an asymmetric seminorm if and only if $A$ is $\sigma(X, Y)$-bounded.
\end{proposition}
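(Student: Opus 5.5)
The plan is to treat $p_B$ directly; the statement for $q_A$ follows by the same argument with the roles of $X$ and $Y$ exchanged, since $q_x(y) = \max\{\langle x, y\rangle, 0\}$ has the same form as $p_y(x)$. The one point to settle first is what \enquote{asymmetric seminorm} means for a function that a priori takes values in $[0,\infty]$. The definition in the preliminaries requires values in $[0,\infty)$. So the content of the statement is that $p_B$ is finite-valued exactly when $B$ is bounded. Positive homogeneity and subadditivity hold automatically in the extended sense.

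For the algebraic properties, fix $x, x' \in X$ and $r \ge 0$. Each $p_y$ is an asymmetric seminorm by \zcref{prop:WeakTop}, so for every $y \in B$:
\begin{equation}
p_y \left( r x \right) = r p_y \left( x \right)
\quad\text{and}\quad
p_y \left( x + x' \right) \le p_y \left( x \right) + p_y \left( x' \right) \le p_B \left( x \right) + p_B \left( x' \right) \, .
\end{equation}
Taking the supremum over $y \in B$ gives $p_B(rx) = r p_B(x)$ and $p_B(x+x') \le p_B(x) + p_B(x')$. Here we use the conventions $0 \cdot \infty = 0$, which is consistent because $p_B(0) = 0$, and $a + \infty = \infty$. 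If $B = \emptyset$, then $p_B \equiv 0$ with the convention $\sup \emptyset = 0$, and $B$ is trivially bounded.

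For the equivalence, note that $p_B(x) = \max\{\sup_{y \in B} \langle x, y\rangle, 0\}$ whenever $B \neq \emptyset$. Hence $p_B(x) < \infty$ if and only if $\sup_{y \in B}\langle x, y\rangle < \infty$. Requiring this for all $x \in X$ is exactly the definition of $\sigma(Y,X)$-boundedness of $B$. So $p_B$ maps into $[0,\infty)$, and is therefore an asymmetric seminorm, if and only if $B$ is $\sigma(Y,X)$-bounded.

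For $q_A$ the argument is verbatim the same: each $q_x$ is an asymmetric seminorm on the cone $Y$ by \zcref{prop:WeakStarTop}, and homogeneity and subadditivity pass to the supremum. Finiteness of $q_A(y) = \max\{\sup_{x\in A}\langle x,y\rangle, 0\}$ for all $y \in Y$ is precisely $\sigma(X,Y)$-boundedness of $A$. The only real subtlety is the finiteness requirement in the definition; beyond that, everything is routine.
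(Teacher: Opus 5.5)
Your proof is correct. The paper states this proposition without proof and treats it as immediate. Your argument supplies the intended routine check: homogeneity and subadditivity pass to the supremum, so the only real content is that $p_B$ (respectively $q_A$) is finite-valued, which is exactly $\sigma(Y,X)$-boundedness of $B$ (respectively $\sigma(X,Y)$-boundedness of $A$).
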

Consequently, we can consider \textbf{polar topologies} on $X$ respectively $Y$ generated by suitable sets on $Y$ respectively $X$, just as in the vector space setting.
Letting $\mathcal{G}$ denote an arbitrary collection of subsets of $Y$ respectively $X$, the corresponding topology induced by the asymmetric seminorms $\{ p_G : G \in \mathcal{G} \}$ respectively $\{ q_G : G \in \mathcal{G} \}$ will be called the \textbf{$\mathcal{G}$-topology} on $X$ respectively $Y$.
In particular $\sigma(X, Y)$ and $\sigma(Y, X)$ are - as usual - generated by the collection of all single point sets in $Y$ respectively in $X$.
\begin{proposition}
\label{prop:PolarProperties}
Let $A \subset X$ and $B \subseteq Y$.
Define the \textbf{polar} $A^\circ$ and the \textbf{prepolar} $\leftidx{^\circ}{B}{}$ as
\begin{equation}
\begin{aligned}
A^\circ = \left\{ y \in Y : q_A \left( y \right) \le 1 \right\} &= \left\{ y \in Y : \sup_{x \in A} \left< x, y \right> \le 1 \right\} \, \\
\leftidx{^\circ}{B}{} = \left\{ x \in Y : p_B \left( x \right) \le 1 \right\} &= \left\{ x \in X : \sup_{y \in B} \left< x, y \right> \le 1 \right\} \, .
\end{aligned}
\end{equation}
Then $A^\circ$ is $\sigma(Y, X)$-closed and convex.
Moreover, it is \textbf{nonnegatively balanced} in the sense that $y \in A^\circ$ implies $t y \in A^\circ$ for all $t \in [0, 1]$.
Similarly, $\leftidx{^\circ}{B}{}$ is $\overline{\sigma}(X, Y)$-closed, convex and nonnegatively balanced.
\end{proposition}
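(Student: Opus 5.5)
The plan is to write each polar as an intersection of simple sets and read off every property from that description. For the polar, set
\begin{equation}
A^\circ = \bigcap_{x \in A} H_x, \qquad H_x = \left\{ y \in Y : \left< x, y \right> \le 1 \right\} .
\end{equation}
Closedness, convexity and nonnegative balancedness are all preserved under arbitrary intersections, so it suffices to verify them for a single $H_x$. The empty intersection (when $A = \emptyset$) is $Y$ itself, which trivially has all three properties.

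Fix $x \in X$.

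\textbf{Convexity.} If $y, z \in H_x$ and $t \in [0,1]$, then $ty + (1-t)z \in Y$ because $Y$ is a cone. Bilinearity with respect to nonnegative scalars gives
\begin{equation}
\left< x, t y + (1-t) z \right> = t \left< x, y \right> + (1-t) \left< x, z \right> \le 1 .
\end{equation}

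\textbf{Nonnegative balancedness.} For $t \in [0,1]$ we have $\left< x, t y \right> = t \left< x, y \right>$. If $\left< x, y \right> \ge 0$, this is at most $\left< x, y \right> \le 1$. If $\left< x, y \right> < 0$, it is at most $0 \le 1$.

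\textbf{Closedness.} By \zcref{prop:WeakStarTop}, $\sigma(Y, X)$ is the topology of pointwise convergence, so $y \mapsto \left< x, y \right>$ is $\sigma(Y, X)$-continuous. Then $H_x$ is the preimage of the closed set $(-\infty, 1]$ and hence closed. Alternatively, one can stay inside the asymmetric framework: $Y \setminus H_x = \{ y : q_{-x}(y) < -1 \}$ is open, since a net with $\lim_\alpha \left< x, y_\alpha \right> = \left< x, y \right> > 1$ eventually satisfies $\left< x, y_\alpha \right> > 1$.

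For the prepolar, set
\begin{equation}
\leftidx{^\circ}{B}{} = \bigcap_{y \in B} G_y, \qquad G_y = \left\{ x \in X : \left< x, y \right> \le 1 \right\} .
\end{equation}
Convexity and nonnegative balancedness of $G_y$ follow exactly as above, now using linearity in the first variable.

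The only genuinely asymmetric point, and the step needing the most care, is the topology in which $G_y$ is closed. In $\sigma(X, Y)$ the function $x \mapsto \left< x, y \right>$ is merely upper semicontinuous, so its sublevel sets need not be closed there. Its superlevel sets are closed instead. I would therefore show that $X \setminus G_y = \{ x : \left< x, y \right> > 1 \}$ is $\overline{\sigma}(X, Y)$-open.

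The conjugate topology is generated by the seminorms $\overline{p_y}(x) = p_y(-x) = \max\{ -\left< x, y \right>, 0 \}$. Take $x_0$ with $\left< x_0, y \right> = 1 + \delta$, where $\delta > 0$. Any $x = x_0 + \Delta$ with $\overline{p_y}(\Delta) < \delta$ satisfies $\left< \Delta, y \right> > -\delta$. Hence $\left< x, y \right> > 1$, so this basic $\overline{\sigma}(X, Y)$-neighbourhood of $x_0$ lies in the complement of $G_y$. Equivalently, $\left< \cdot, y \right>$ is lower semicontinuous for $\overline{\sigma}(X, Y)$, because $-\left< \cdot, y \right> = \left< -\,\cdot, y \right>$ is upper semicontinuous there.

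Intersecting over $y \in B$ then gives all three claimed properties of $\leftidx{^\circ}{B}{}$.
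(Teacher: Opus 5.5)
Your proof is correct and follows essentially the paper's route. Convexity and nonnegative balancedness are immediate, and closedness of the prepolar rests on the lower semicontinuity of $\langle \cdot, y \rangle$ in $\overline{\sigma}(X,Y)$: you check it with basic neighbourhoods, while the paper uses a $\liminf$ bound along convergent nets. For $A^\circ$ the paper only cites an external result, so your appeal to pointwise convergence is more self-contained.

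One slip in your aside: since $q_{-x} \ge 0$, the set $\{ y : q_{-x}(y) < -1 \}$ is empty. You mean $\{ y : \langle x, y \rangle > 1 \}$, and the net argument you give for its openness is fine.
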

\begin{proof}
That $A^\circ$ and $\leftidx{^\circ}{B}{}$ are convex and nonnegatively balanced is trivial.
Moreover, $A^\circ$ is $\sigma(Y, X)$-closed by \cite[Proposition 2.4.29]{src:Cobzaş:FunctionalAnalysisinAsymmetricNormedSpaces}.
To see that $\leftidx{^\circ}{B}{}$ is $\overline{\sigma}(X, Y)$-closed, let $(x_\alpha)_{\alpha \in I}$ be a net in $\leftidx{^\circ}{B}{}$ that $\overline{\sigma}(X, Y)$-converges to some $x \in X$.
Then,
\begin{equation}
\left< x, y \right>
\le
\liminf_{\alpha} \left< x_\alpha, y \right>
\le
1 \, ,
\end{equation}
for all $y \in Y$ such that $x \in \leftidx{^\circ}{B}{}$.
\end{proof}

\begin{theorem}[Bipolar Theorem]
\label{thm:bipolar}
Let $A \subset X$ and $B \subseteq Y$.
Letting $\mathcal{N}$ denote a neighbourhood basis of the origin in $\sigma(X, Y)$,
\begin{align}
\label{eq:BipolarWeakConjugateClosureRep}
\leftidx{^\circ}{\left( A^\circ \right)}{}
&=
\mathrm{cl}_{\overline{\sigma}(Y,X)} \mathrm{conv}\, \left[ A \cup \{ 0 \} \right]
=
\bigcap_{N \in \mathcal{N}} \left[ N + \mathrm{conv}\, \left( A \cup \{ 0 \} \right) \right]
\\
\left( \leftidx{^\circ}{B}{} \right)^\circ
&=
\mathrm{cl}_{\sigma(Y,X)} \mathrm{conv}\, \left[ B \cup \{ 0 \} \right] \, .
\end{align}
\end{theorem}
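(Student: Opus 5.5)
The first identity as printed has the closure taken in $\overline{\sigma}(Y,X)$. Since $\mathrm{conv}\,[A\cup\{0\}]\subseteq X$ and \zcref{prop:PolarProperties} says prepolars are $\overline{\sigma}(X,Y)$-closed, I read this as $\overline{\sigma}(X,Y)$ and prove that version.

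For both identities, the inclusion \enquote{$\supseteq$} is formal. Put $C=\mathrm{conv}\,[A\cup\{0\}]$. We have $A\subseteq \leftidx{^\circ}{(A^\circ)}{}$ trivially. By \zcref{prop:PolarProperties}, $\leftidx{^\circ}{(A^\circ)}{}$ is convex, contains $0$ and is $\overline{\sigma}(X,Y)$-closed. Hence it contains $\mathrm{cl}_{\overline{\sigma}(X,Y)} C$. The same argument, using that $A^\circ$ is $\sigma(Y,X)$-closed, gives the inclusion \enquote{$\supseteq$} in the second identity.

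The second description of the closure is purely topological. The $\overline{\sigma}(X,Y)$-neighbourhoods of $x$ are exactly the sets $x-N$ with $N\in\mathcal{N}$. So $x\in\mathrm{cl}_{\overline{\sigma}}C$ iff $(x-N)\cap C\neq\emptyset$ for all $N$, iff $x\in N+C$ for all $N$. It also suffices to use basic neighbourhoods
\begin{equation}
N=\left\{z\in X : \langle z,y_i\rangle<\epsilon,\ i=1,\dots,n\right\},
\end{equation}
which are convex, open and absorbing.

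For the remaining inclusion in the first identity, let $x\notin \mathrm{cl}_{\overline{\sigma}}C$. Choose a basic $N$ with $x\notin C+N$, and set $D=C+\tfrac12 N$. Then $D$ is convex, absorbing and contains $\tfrac12 N$. Its Minkowski functional $p_D$ is therefore an asymmetric seminorm with
\begin{equation}
p_D \le \tfrac{2}{\epsilon}\max\{p_{y_1},\dots,p_{y_n}\}.
\end{equation}
The key quantitative step, which I expect to be the main obstacle, is to show $p_D(x)>1$ strictly. If $p_D(x)\le 1$, then $\lambda x\in D$ for some $\lambda$ close to $1$. Writing $x=\lambda x+(1-\lambda)x$ and using that $(1-\lambda)x\in\tfrac12 N$ for $\lambda$ close enough to $1$ (absorbency), we get $x\in C+\tfrac12N+\tfrac12N\subseteq C+N$, a contradiction.

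Next, the asymmetric Hahn--Banach theorem \cite[Theorem 2.2.2]{src:Cobzaş:FunctionalAnalysisinAsymmetricNormedSpaces} gives a linear $\phi\le p_D$ with $\phi(x)=p_D(x)>1$. Since $\phi$ is dominated by a finite maximum of the $p_{y_i}$, it is $\sigma(X,Y)$-upper semicontinuous. By the preceding theorem, $\phi=\langle\cdot,y\rangle$ for a unique $y\in Y$. For $a\in A\subseteq D$ we have $\langle a,y\rangle\le p_D(a)\le 1$, so $y\in A^\circ$. Since $\langle x,y\rangle>1$, this shows $x\notin\leftidx{^\circ}{(A^\circ)}{}$.

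For the second identity I reduce to the classical bipolar theorem for the ordinary dual pair $(\widetilde{Y},X)$. By \zcref{prop:WeakStarTop}, $\sigma(Y,X)$ is the subspace topology of $\sigma(\widetilde{Y},X)$. Hence
\begin{equation}
\mathrm{cl}_{\sigma(Y,X)}\mathrm{conv}[B\cup\{0\}]=Y\cap \mathrm{cl}_{\sigma(\widetilde{Y},X)}\mathrm{conv}[B\cup\{0\}].
\end{equation}
Let $y_0\in Y$ lie outside this set. The ordinary separation theorem in $(\widetilde{Y},\sigma(\widetilde{Y},X))$, whose dual is $X$, yields $x\in X$ and $c\in\mathbb{R}$ with $\langle x,y_0\rangle>c\ge\sup_{B\cup\{0\}}\langle x,\cdot\rangle$. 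In particular $c\ge0$, because $0$ belongs to the separated set. Enlarging $c$ slightly if necessary, we may take $c>0$ and rescale $x$ so that $c=1$. Then $x\in\leftidx{^\circ}{B}{}$ and $\langle x,y_0\rangle>1$, so $y_0\notin(\leftidx{^\circ}{B}{})^\circ$, which completes the proof.
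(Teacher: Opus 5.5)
Your proof is correct. For the second identity you follow the paper: you reduce to the ordinary pair $(\widetilde{Y},X)$ via \zcref{prop:WeakStarTop}, and you write out the separation and the normalisation $c>0$ instead of quoting the classical bipolar theorem. Your reading of the first closure as $\mathrm{cl}_{\overline{\sigma}(X,Y)}$ is also how the paper's proof uses it.

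For the first identity the routes differ.
\begin{itemize}
\item The paper takes $x$ outside the $\overline{\sigma}(X,Y)$-closure and applies the asymmetric separation theorem \cite[Theorem 2.2.9]{src:Cobzaş:FunctionalAnalysisinAsymmetricNormedSpaces} in the conjugate space $(X,\overline{\sigma}(X,Y))$. This gives a $\overline{\sigma}$-upper semicontinuous $\phi$ with $\phi(x)<\inf\phi$ on the closure. It then passes to $\psi=-\phi\in Y$ and rescales by some $r>1$. The description via $\bigcap_N[N+\mathrm{conv}(A\cup\{0\})]$ is read off from \zcref{lem:ClosureAsUInveseThickening}.
\item You make $\bigcap_N(C+N)$ the working characterisation, derived directly from the fact that the $\overline{\sigma}$-neighbourhoods of $x$ are the sets $x-N$. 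You then prove the separation by hand. Because $0\in C$, the gauge of $C+\tfrac12N$ is already dominated by $\tfrac{2}{\epsilon}\max_i p_{y_i}$. So the dominated-extension form of Hahn--Banach directly produces a $\sigma(X,Y)$-upper semicontinuous functional, with no detour through the conjugate dual and no sign change.
\end{itemize}
The paper's version is shorter and gets strictness for free from the strict separation inequality. Yours costs the extra argument $p_D(x)>1$, but it is self-contained: it needs only the algebraic Hahn--Banach theorem and the identification $Y\simeq(X,\sigma(X,Y))^*$. It also makes transparent why the separating functional lands in $Y$.

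One small precision in the strictness step. From $p_D(x)\le 1$ you get $\lambda x\in D$ for \emph{every} $\lambda\in(0,1)$, since $p_D(\lambda x)\le\lambda<1$, $D$ is convex and $0\in D$. This is what lets you choose $\lambda$ with $(1-\lambda)x\in\tfrac12N$ at the same time.
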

\begin{proof}
We begin, by showing the bipolar theorem for $A$.
Trivially, $0 \in \leftidx{^\circ}{( A^\circ )}{}$ and $\langle x, y \rangle \le 1$ for all $x \in A$, $y \in A^\circ$, such that $A \subseteq \leftidx{^\circ}{( A^\circ )}{}$.
By \zcref{prop:PolarProperties}, it follows that
\begin{equation}
\mathrm{cl}_{\overline{\sigma}(Y,X)} \mathrm{conv}\, \left( A \cup \{ 0 \} \right)
\subseteq
\leftidx{^\circ}{\left( A^\circ \right)}{} \, .
\end{equation}
If strict inequality holds, there is some $x \in \leftidx{^\circ}{( A^\circ )}{}$ that is not contained in $\mathrm{cl}_{\overline{\sigma}(Y,X)} \mathrm{conv}\, ( A \cup \{ 0 \} )$.
Consequently, we may apply the Hahn-Banach theorem \cite[Theorem 2.2.9]{src:Cobzaş:FunctionalAnalysisinAsymmetricNormedSpaces} and find a $\phi \in (X, \overline{\sigma}(X, Y))^*$ with
\begin{equation}
\phi \left( x \right)
<
\inf \phi \left( \mathrm{cl}_{\overline{\sigma}(Y,X)} \mathrm{conv}\, ( A \cup \{ 0 \} ) \right) \, .
\end{equation}
By definition, $\phi$ is $\overline{\sigma}(X, Y)$-upper semicontinuous, and thus $\psi = -\phi \in Y$ with
\begin{equation}
\sup \psi \left( \mathrm{cl}_{\overline{\sigma}(Y,X)} \mathrm{conv}\, ( A \cup \{ 0 \} ) \right)
<
\psi \left( x \right) \, .
\end{equation}
Since the left-hand side contains the origin, $\psi(x) > 0$ and by the strict inequality, there is some $r > 1$ with $r \psi / \psi(x) \in A^\circ$.
However, this contradicts $x \in \leftidx{^\circ}{( A^\circ )}{}$.
Consequently,
\begin{equation}
\leftidx{^\circ}{\left( A^\circ \right)}{}
=
\mathrm{cl}_{\overline{\sigma}(Y,X)} \mathrm{conv}\, \left( A \cup \{ 0 \} \right) \, .
\end{equation}
The rest of \zcref{eq:BipolarWeakConjugateClosureRep} follows from \zcref{lem:ClosureAsUInveseThickening}.

For $B$, the claim is obvious by recalling that $\sigma(Y, X)$ coincides with the subspace topology induced by $\sigma(\widetilde{Y}, X)$ and applying the bipolar theorem for dual pairs of vector spaces. 
\end{proof}
Let $\tau(X,Y)$ denote the \textbf{Mackey topology} on $X$, i.e. the $\mathcal{G}$-topology induced by
\begin{equation}
\mathcal{G}
=
\left\{ B \subseteq Y : B \text{ is convex and $\sigma(Y,X)$-compact} \right\} \, .
\end{equation}
Note that, in any Hausdorff, topological vector space, the convex, balanced hull of a convex, compact set is a compact disk \cite[p.66, 10.2]{src:SchaeferWolff:TVS}.
Hence, in the vector space setting, $\tau(X,Y)$ coincides with the ordinary Mackey topology.
\begin{lemma}
\label{lem:MackeyTopIsFiner}
Let $X$ be a vector space with a $T_0$ asymmetric locally convex topology $\alpha$.
Then, $\alpha \subseteq \tau(X, (X, \alpha)^*)$.
\end{lemma}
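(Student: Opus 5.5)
Write $Y = (X, \alpha)^*$. By \zcref{cor:XandXDualAreInDuality}, $X$ and $Y$ are in duality, so the Mackey topology $\tau(X, Y)$ makes sense. Since $\alpha$ is an asymmetric locally convex topology, it is generated by a family $\mathcal{P}$ of asymmetric seminorms. It therefore suffices to show the following: for every $p \in \mathcal{P}$, the basic neighbourhood $\{x : p(x) < 1\}$ of the origin contains a $\tau(X,Y)$-neighbourhood of the origin. Since $\tau(X,Y)$ is translation invariant like $\alpha$, this gives $\alpha \subseteq \tau(X,Y)$.

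Fix $p \in \mathcal{P}$ and set
\begin{equation}
B_p = \left\{ \phi \in Y : \forall x \in X : \phi(x) \le p(x) \right\} .
\end{equation}
I will check four properties of $B_p$.

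First, $B_p$ is a subset of $Y$. Each $\phi \in B_p$ is linear with $\phi \le p$, so it is $\alpha$-upper semicontinuous by \cite[Proposition 2.1.11]{src:Cobzaş:FunctionalAnalysisinAsymmetricNormedSpaces}.

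Second, $B_p$ is convex. This is clear from the defining inequality.

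Third, $B_p$ is $\sigma(Y,X)$-compact. This is the Alaoglu-type step and the main point of the proof. I would use Tychonoff: embed $B_p$ into the product $\prod_{x \in X} [-p(-x), p(x)]$, which is compact because $\phi(x) \le p(x)$ and $-\phi(x) = \phi(-x) \le p(-x)$. The image of $B_p$ is exactly the set of linear functionals $\phi$ with $\phi \le p$ pointwise. This set is closed in the product, since linearity and the pointwise inequality are preserved under pointwise limits. By the first step, every such functional lies in $Y$. By \zcref{prop:WeakStarTop}, $\sigma(Y,X)$ is the topology of pointwise convergence, so the product topology restricts to $\sigma(Y,X)$ on $B_p$. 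Hence $B_p$ is $\sigma(Y,X)$-compact. This is essentially the asymmetric Alaoglu theorem (cf.\ \cite[Section 2.4]{src:Cobzaş:FunctionalAnalysisinAsymmetricNormedSpaces}).

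Fourth, $p_{B_p} = p$. By construction $p_{B_p}(x) = \sup_{\phi \in B_p} \max\{\phi(x), 0\} \le p(x)$. For the reverse inequality, fix $x$ with $p(x) > 0$. The asymmetric Hahn-Banach theorem \cite[Theorem 2.2.2]{src:Cobzaş:FunctionalAnalysisinAsymmetricNormedSpaces} gives a linear $\phi \le p$ with $\phi(x) = p(x)$, and $\phi \in B_p$ by the first step. So $p_{B_p}(x) \ge p(x)$, and equality holds.

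Since $B_p$ is convex and $\sigma(Y,X)$-compact, it belongs to the collection $\mathcal{G}$ defining $\tau(X,Y)$. Therefore $p = p_{B_p}$ is one of the seminorms generating $\tau(X,Y)$, and the set $\{x : p(x) < 1\}$ is itself a $\tau(X,Y)$-neighbourhood of the origin.

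A finite intersection of such sets is also a $\tau(X,Y)$-neighbourhood of the origin. Hence every basic $\alpha$-neighbourhood of the origin is $\tau(X,Y)$-open around $0$, and translating gives $\alpha \subseteq \tau(X,Y)$.

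The only delicate point is the compactness in the third step. Boundedness of the coordinates needs both $p(x)$ and $p(-x)$ to be finite, which holds since they are real numbers. The $T_0$ hypothesis is used only to make $X$ and $Y$ a dual pair.
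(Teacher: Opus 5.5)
Your proof is correct and follows the paper's approach. Your $B_p$ is exactly the paper's polar $K = p^{-1}([0,1])^\circ$, and you show, as the paper does, that this set is convex and $\sigma(Y,X)$-compact with $p_K = p$.

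The differences are only in how the two sub-steps are justified. You prove the Alaoglu-type compactness directly via Tychonoff instead of citing it. You obtain $p_{B_p} \ge p$ directly from the Hahn--Banach extension theorem, rather than from the bipolar theorem applied to the closed unit set $p^{-1}([0,1])$. Your route has the advantage of not needing that set to be closed in the conjugate weak topology $\overline{\sigma}(X,Y)$.
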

\begin{proof}
Set  $Y = (X, \alpha)^*$ for brevity and let $\mathcal{P}$ denote a set of asymmetric seminorms on $X$ generating its topology.
Furthermore, let $\beta$ denote the asymmetric locally convex topology induced by the conjugate family $\overline{\mathcal{P}}$ of seminorms.
For every $p \in \mathcal{P}$, the set $p^{-1}([0, 1] )$ is $\beta$-closed \cite[Proposition 1.1.59]{src:Cobzaş:FunctionalAnalysisinAsymmetricNormedSpaces} and moreover, the polar $K = p^{-1}([0, 1])^\circ$ in the pairing $(X, Y)$ is a convex, $\sigma(Y, X)$-compact set \cite[Theorem 2.4.30]{src:Cobzaş:FunctionalAnalysisinAsymmetricNormedSpaces}.
Furthermore, by \zcref{thm:bipolar},
\begin{equation}
\forall x \in X : p_K \left( x \right) \le 1
\iff
p \left( x \right) \le 1 \, .
\end{equation}
Hence, $p = p_K$, i.e. $p$ is an asymmetric seminorm defining the Mackey topology.
\end{proof}
\begin{theorem}
Let $X$ and $Y$ be in duality.
Then, $\tau(X, Y)^*$ is the finest $T_0$ asymmetric locally convex topology on $X$ with dual space $Y$.
\end{theorem}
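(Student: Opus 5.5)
The statement is presumably meant as: $\tau(X,Y)$ is a $T_0$ asymmetric locally convex topology with $(X,\tau(X,Y))^* = Y$, and every $T_0$ asymmetric locally convex topology $\alpha$ on $X$ with $(X,\alpha)^* = Y$ satisfies $\alpha \subseteq \tau(X,Y)$. The proof splits accordingly into a compatibility part and a maximality part.

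\textbf{Maximality.} Let $\alpha$ be $T_0$ with $(X,\alpha)^* = Y$. Then \zcref{lem:MackeyTopIsFiner} gives directly $\alpha \subseteq \tau(X,(X,\alpha)^*) = \tau(X,Y)$. This part is immediate.

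\textbf{Compatibility, inclusion $Y \subseteq (X,\tau(X,Y))^*$.} For $y \in Y$ take $B = \{y\}$. It is convex and $\sigma(Y,X)$-compact, so $p_y = p_B$ is one of the seminorms defining $\tau(X,Y)$. Hence $\sigma(X,Y) \subseteq \tau(X,Y)$. It follows that $\tau(X,Y)$ is $T_0$ by \zcref{prop:WeakTop}, and that every $\langle \cdot, y\rangle$ is $\tau(X,Y)$-upper semicontinuous.

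\textbf{Compatibility, inclusion $(X,\tau(X,Y))^* \subseteq Y$.} This is the main obstacle. Let $\phi$ be linear and $\tau(X,Y)$-upper semicontinuous. By \cite[Proposition 2.1.11]{src:Cobzaş:FunctionalAnalysisinAsymmetricNormedSpaces} there are convex $\sigma(Y,X)$-compact sets $B_1,\dots,B_n$ and $C>0$ with
\begin{equation}
\phi(x) \le C \max_i p_{B_i}(x) \quad \text{for all } x \in X .
\end{equation}
Set $K = \mathrm{conv}\,\bigl(\{0\} \cup C B_1 \cup \dots \cup C B_n\bigr)$. In the Hausdorff vector space $(\widetilde Y, \sigma(\widetilde Y, X))$ the convex hull of finitely many compact convex sets is compact, being a continuous image of a simplex times the product of the sets. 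Since $Y$ is a convex cone, $K \subseteq Y$, so $K$ is $\sigma(Y,X)$-compact and convex. Moreover $\sup_{y\in K}\langle x,y\rangle \ge C\max_i p_{B_i}(x)$, because $0 \in K$ makes the supremum nonnegative. Hence
\begin{equation}
\sup_{y \in K} \langle x, y\rangle \le 1 \implies \phi(x) \le 1 .
\end{equation}
This is exactly the hypothesis of \cite[Theorem 3.8]{src:KeimelRoth:OrderedConesAndApproximation} used in the proof that $(X,\sigma(X,Y))^* \simeq Y$. Applying it yields $y \in K \subseteq Y$ with $\phi = \langle\cdot,y\rangle$, and uniqueness follows from nondegeneracy.

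If that citation does not fit directly, the fallback is to argue by hand. The inequality says $\phi$ lies in the $\sigma(\widetilde Y,X)$-closed bipolar of $K$ taken in the vector dual pair $(X,\widetilde Y)$. Since $K$ is compact convex and contains $0$, the bipolar theorem (\zcref{thm:bipolar}) combined with a separation argument in the algebraic dual of $X$ with its pointwise topology, where $K$ remains compact, forces $\phi \in K$.
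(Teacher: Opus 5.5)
Your proof is correct and follows the paper's route. Maximality comes from \zcref{lem:MackeyTopIsFiner}, and the inclusion $(X,\tau(X,Y))^*\subseteq Y$ uses the same bound $\phi\le C\max_i p_{B_i}$ and the same compact convex set $\mathrm{conv}\bigl(\{0\}\cup C\bigcup_i B_i\bigr)$. The paper closes with the bipolar theorem in the pair $(X,Z)$, $Z=(X,\tau(X,Y))^*$, which is your fallback with $Z$ in place of the algebraic dual; phrase the fallback that way, since the bipolar must be taken in a pair already containing $\phi$, not in $(X,\widetilde Y)$.
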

\begin{proof}
We follow the proof given in \cite[p. 131, Theorem 3.2]{src:SchaeferWolff:TVS} for the vector space setting.
It is clear, that $\tau(X,Y)$ is finer than $\sigma(X,Y)$, such that
\begin{equation}
Y \simeq (X, \sigma(X, Y))^* \subseteq (X, \tau(X, Y))^* =: Z \, .
\end{equation}
For $\phi \in Z$ there exist $C > 0$ and $\sigma(Y,X)$-compact, convex sets $B_1, \dots B_n \subseteq Y$ \cite[Proposition 2.1.11]{src:Cobzaş:FunctionalAnalysisinAsymmetricNormedSpaces} with
\begin{equation}
\phi
\le
C \max \left\{ p_{B_1}, \dots, p_{B_n} \right\} \, .
\end{equation}
Recall that $\sigma(Y, X)$ coincides with the subspace topology induced by $\sigma(\widetilde{Y}, X)$ and that the convex hull of finitely many convex compact sets in a topological vector space is compact.
Then,
\begin{equation}
B = \mathrm{conv}\, \left[ \left\{ 0 \right\} \cup C \bigcup_{m = 1}^n B_m \right]
\end{equation}
is also convex and $\sigma(Y, X)$-compact, i.e. $B \in \mathcal{G}$.
Clearly, $\phi \le p_B$ and thus,
\begin{equation}
\leftidx{^\circ}{\left\{ \phi \right\}}{}
=
\left\{ x \in X : \phi \left( x \right) \le 1 \right\}
\supseteq
\left\{ x \in X : p_B \left( x \right) \le 1 \right\}
=
\leftidx{^\circ}{B}{} \, ,
\end{equation}
where the prepolars are considered in the dual pair $(X, Z)$.
Taking polars in $(X, Z)$ on both sides, we thus have, $( \leftidx{^\circ}{\{ \phi \}}{} )^\circ \subseteq ( \leftidx{^\circ}{B}{} )^\circ$.
Equipping both $Y$ and $Z$ with their respective weak-$\ast$ topologies, it is clear that the canonical injection $Y \to Z$ is linear and continuous.
Since $B$ is convex and compact in $Y$, it is thus also convex and compact in $Z$.
Hence, by the bipolar theorem,
\begin{equation}
\phi \subseteq \left( \leftidx{^\circ}{\left\{ \phi \right\}}{} \right)^\circ \subseteq \left( \leftidx{^\circ}{B}{} \right)^\circ = B \, .
\end{equation}
Consequently, $\phi \in Y$ and $Y = Z$.
That the Mackey topology is the finest asymmetric locally convex topology with dual space $Y$ follows immediately from \zcref{lem:MackeyTopIsFiner}.
\end{proof}
The following results constitute the asymmetric analogues of the Mackey-Arens theorem.
\begin{corollary}
Let $X$ be a $T_0$ asymmetric locally convex space $X$ with topology $\alpha$.
Then, $\sigma(X, X^*) \subseteq \alpha \subseteq \tau(X, X^*)$.
\end{corollary}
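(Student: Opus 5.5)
The argument has two inclusions. The upper bound $\alpha \subseteq \tau(X, X^*)$ is exactly \zcref{lem:MackeyTopIsFiner} applied to the given $T_0$ asymmetric locally convex topology $\alpha$. Its hypotheses are satisfied, since $(X,\alpha)$ is $T_0$ and $X^* = (X,\alpha)^*$. The pairing $(X, X^*)$ in which $\tau(X,X^*)$ is formed is an asymmetric dual pair by \zcref{cor:XandXDualAreInDuality}.

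For the lower bound $\sigma(X, X^*) \subseteq \alpha$, I will show that every generating seminorm of $\sigma(X,X^*)$ is $\alpha$-upper semicontinuous at the origin. These seminorms are $p_\phi(x) = \max\{\phi(x), 0\}$ with $\phi \in X^*$.

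Fix $\phi \in X^*$ and $\epsilon > 0$. By definition $\phi$ is linear and $\alpha$-upper semicontinuous, and $\phi(0)=0$. Hence $U = \{x : \phi(x) < \epsilon\}$ is an $\alpha$-open neighbourhood of the origin, and on $U$ we have $p_\phi < \epsilon$. Now take a basic $\sigma(X,X^*)$-neighbourhood
\begin{equation}
x + \{\Delta : p_{\phi_i}(\Delta) < \epsilon,\ i = 1,\dots,n\}.
\end{equation}
It contains $x + \bigcap_i \{\Delta : \phi_i(\Delta) < \epsilon\}$. Each set in this intersection is $\alpha$-open, and translation is an $\alpha$-homeomorphism because addition is continuous. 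So every $\sigma(X,X^*)$-neighbourhood of $x$ contains an $\alpha$-neighbourhood of $x$, and therefore $\sigma(X,X^*) \subseteq \alpha$.

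Alternatively, this inclusion is the statement in \zcref{cor:XandXDualAreInDuality} that $\sigma(X,X^*)$ is the weakest $T_0$ asymmetric locally convex topology preserving the dual $X^*$, and $\alpha$ preserves that dual by definition.

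I expect no real obstacle. The only subtlety is bookkeeping: the Mackey topology in \zcref{lem:MackeyTopIsFiner} must be taken with respect to the same pairing $(X, X^*)$, and $(X,\alpha)^*$ must coincide with $X^*$, which holds by definition.
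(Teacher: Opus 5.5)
Your proof is correct and matches the paper, which gives no separate proof for this corollary: the lower inclusion is the statement in \zcref{cor:XandXDualAreInDuality} that $\sigma(X,X^*)$ is the weakest $T_0$ asymmetric locally convex topology with dual $X^*$, and the upper inclusion is \zcref{lem:MackeyTopIsFiner} with $(X,\alpha)^* = X^*$. Your direct check, that each basic set $\{y : \phi_i(y) < \phi_i(x) + \epsilon\}$ is $\alpha$-open by upper semicontinuity, spells out the step the paper calls obvious.
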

\begin{corollary}
Let $\alpha$ be a $T_0$ asymmetric locally convex topology on $X$ with $\sigma(X, Y) \subseteq \alpha \subseteq \tau(X, Y)$.
Then $(X, \alpha)^* = Y$.
\end{corollary}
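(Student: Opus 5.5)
We have to show $(X,\alpha)^* = Y$, and the plan is a sandwich argument. Write $Z = (X,\alpha)^*$.

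\emph{Lower inclusion, $Y \subseteq Z$.} Every $y \in Y$ gives a functional $\langle \cdot, y\rangle$ that is linear and $\sigma(X,Y)$-upper semicontinuous; this is part (ii) of the theorem identifying $(X,\sigma(X,Y))^*$ with $Y$. Since $\sigma(X,Y) \subseteq \alpha$, upper semicontinuity survives passing to the finer topology $\alpha$. Hence $\langle \cdot, y \rangle \in Z$. By nondegeneracy, distinct elements of $Y$ give distinct functionals, so $Y$ embeds in $Z$ and we regard $Y \subseteq Z$.

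\emph{Upper inclusion, $Z \subseteq Y$.} Take $\phi \in Z$. Because $\alpha \subseteq \tau(X,Y)$, the functional $\phi$ is also upper semicontinuous for $\tau(X,Y)$, so $\phi \in (X,\tau(X,Y))^*$. The preceding theorem says $\tau(X,Y)$ is a $T_0$ asymmetric locally convex topology whose dual space is $Y$. Therefore $\phi = \langle \cdot, y\rangle$ for a unique $y \in Y$. Combined with the lower inclusion, this gives $Z = Y$.

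Two points need care. First, we must confirm that the preceding theorem's conclusion really is $(X,\tau(X,Y))^* = Y$. Its proof establishes exactly $Z = Y$ for $Z = (X,\tau(X,Y))^*$, so this holds. Second, the identification of functionals with elements of $Y$ must be the same one in both inclusions. It is: in each case the map is $y \mapsto \langle\cdot,y\rangle$, and it is injective by nondegeneracy. Beyond checking these, there is no real obstacle; both inclusions are just monotonicity of upper semicontinuity under refinement of topologies.
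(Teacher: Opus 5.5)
Your sandwich argument $Y \simeq (X,\sigma(X,Y))^* \subseteq (X,\alpha)^* \subseteq (X,\tau(X,Y))^* = Y$ is correct and is how the paper intends this corollary to follow; the paper gives no separate proof. You combine monotonicity of upper semicontinuity under refinement of topologies with the equality $Z = Y$ established in the proof of the preceding Mackey-type theorem, and you were right to read that equality off the proof, since the theorem's statement is garbled.
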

Finally, every $T_0$ asymmetric locally convex topology is naturally a $\mathcal{G}$-topology.
\begin{proposition}
\label{prop:PolarOfNeighbourhoodIsEquicontinuous}
Let $X$ be an asymmetric locally convex space and $V \subseteq X$ a neighbourhood of the origin.
Then $V^\circ$ is equicontinuous.
\end{proposition}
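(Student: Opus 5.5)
The plan is to verify the definition of equicontinuity directly, using the polar taken in the pairing of $X$ with $X^*$:
\begin{equation}
V^\circ = \left\{ \phi \in X^* : \sup_{x \in V} \phi(x) \le 1 \right\} .
\end{equation}
No duality result or topological argument should be needed; the proof is a scaling argument.

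Fix $\epsilon > 0$. The candidate neighbourhood is $U = \tfrac{\epsilon}{2} V$. Since scalar multiplication by a positive number is a homeomorphism of an asymmetric locally convex space, $U$ is again a neighbourhood of the origin. To see this concretely, note that the basic sets $U_{0,F,\delta}$ are mapped onto $U_{0,F,c\delta}$ under $x \mapsto cx$ for $c > 0$, by positive homogeneity of the seminorms.

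Now take $\phi \in V^\circ$ and $x \in U$. Write $x = \tfrac{\epsilon}{2} v$ with $v \in V$. Linearity gives
\begin{equation}
\phi(x) = \tfrac{\epsilon}{2}\, \phi(v) \le \tfrac{\epsilon}{2} < \epsilon .
\end{equation}
This bound holds uniformly for all $\phi \in V^\circ$ and $x \in U$, which is exactly equicontinuity.

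The only step needing care is the claim that $\tfrac{\epsilon}{2} V$ is a neighbourhood of $0$. This holds because $V$ contains some $U_{0,F,\delta}$, and then $\tfrac{\epsilon}{2} V \supseteq U_{0,F,\epsilon\delta/2}$. We use $\epsilon/2$ rather than $\epsilon$ because the polar only gives the non-strict bound $\phi(v) \le 1$, while the definition of equicontinuity asks for the strict inequality $\phi(x) < \epsilon$.
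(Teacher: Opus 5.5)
Your proof is correct. The paper states this proposition without proof, and your scaling argument is exactly the routine verification it leaves to the reader: take $U=\tfrac{\epsilon}{2}V$, which is a $0$-neighbourhood because $c\,U_{0,F,\delta}=U_{0,F,c\delta}$, then use linearity of $\phi\in V^\circ$ to get $\phi(x)\le\epsilon/2<\epsilon$, and the factor $\tfrac{\epsilon}{2}$ correctly secures the strict inequality the definition of equicontinuity demands.
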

%
\begin{theorem}
Let $X$ be a $T_0$ asymmetric locally convex space and set
\begin{equation}
\mathcal{G} = \left\{
H \subseteq X^* : \text{$H$ is equicontinuous}
\right\} \, .
\end{equation}
Then the $\mathcal{G}$-topology on $X$ coincides with its original topology.
\end{theorem}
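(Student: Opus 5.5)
We prove the two inclusions between the original topology $\alpha$ of $X$ and the $\mathcal{G}$-topology separately. First note that the $\mathcal{G}$-topology is well defined. Indeed, an equicontinuous $H \subseteq X^*$ is $\sigma(X^*, X)$-bounded: given $x \in X$, choose the neighbourhood $U$ for $\epsilon = 1$; since $U$ is absorbing, some $t > 0$ satisfies $t x \in U$, so $\sup_{\phi \in H} \phi(x) < 1/t$. By the proposition on $p_B$, each $p_H$ is therefore an asymmetric seminorm. By \zcref{cor:XandXDualAreInDuality}, $X$ and $X^*$ are in duality, so the polar calculus of this section applies.

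\emph{The $\mathcal{G}$-topology is coarser than $\alpha$.} Let $H$ be equicontinuous and $\epsilon > 0$. Pick a neighbourhood $U$ of the origin with $\phi(x) < \epsilon$ for all $\phi \in H$ and $x \in U$. Then $p_H(x) = \sup_{\phi \in H} \max\{\phi(x), 0\} \le \epsilon$ on $U$, so $p_H$ is bounded by $\epsilon$ on a neighbourhood of zero. Since the topologies are translation invariant through the quasi-uniformities $V_{F,\epsilon}$, the basic set $\{ y : p_H(y - x) < \epsilon\}$ contains $x + U'$ with $U' = \{ \Delta : p_H(\Delta) < \epsilon \} \supseteq U$ (after shrinking $\epsilon$ to $\epsilon/2$). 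Finite intersections are handled the same way, so every $\mathcal{G}$-open set is $\alpha$-open.

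\emph{$\alpha$ is coarser than the $\mathcal{G}$-topology.} Let $\mathcal{P}$ generate $\alpha$ and fix $p \in \mathcal{P}$. Set $V = p^{-1}([0,1])$, which is an $\alpha$-neighbourhood of the origin. By \zcref{prop:PolarOfNeighbourhoodIsEquicontinuous}, $H = V^\circ \in \mathcal{G}$. Following the argument of \zcref{lem:MackeyTopIsFiner}, $V$ is closed in the conjugate topology, and it is convex and contains $0$. By the Hahn--Banach separation argument used in \zcref{thm:bipolar}, applied with the pairing $(X, X^*)$, we get ${}^\circ(V^\circ) = V$. Hence $p_H(x) \le 1 \iff p(x) \le 1$, and positive homogeneity gives $p = p_H$. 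Thus every generating seminorm of $\alpha$ lies in the $\mathcal{G}$-family, and $\alpha$ is contained in the $\mathcal{G}$-topology.

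The main obstacle is the bipolar identity ${}^\circ(V^\circ) = V$. \zcref{thm:bipolar} is stated in terms of closures for $\overline{\sigma}(X, X^*)$, whereas $V$ is only known to be closed in the conjugate topology $\beta$ of $\alpha$. The cleanest route is to redo the separation argument from the proof of \zcref{thm:bipolar} directly. Suppose $x \notin V$. Apply the asymmetric Hahn--Banach theorem in $(X, \beta)$ to the $\beta$-closed convex set $V$ to obtain a $\beta$-u.s.c.\ functional $\phi$ with $\phi(x) < \inf \phi(V)$. Then $\psi = -\phi$ is $\alpha$-upper semicontinuous, so $\psi \in X^*$, and $\psi(x) > \sup \psi(V) \ge 0$. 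A rescaling of $\psi$ lies in $V^\circ$ and takes a value greater than $1$ at $x$, so $x \notin {}^\circ(V^\circ)$. This mirrors exactly what \zcref{lem:MackeyTopIsFiner} cites.
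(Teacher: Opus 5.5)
Your proposal is correct and follows essentially the paper's proof. It has the same three parts: the boundedness argument, the easy inclusion via equicontinuity (using finite unions of equicontinuous sets), and the reverse inclusion via polars of convex, conjugate-closed $0$-neighbourhoods combined with the bipolar identity. The one difference is that you take the specific neighbourhoods $p^{-1}([0,1])$ and conclude $p = p_{V^\circ}$, as in \zcref{lem:MackeyTopIsFiner}, whereas the paper works with an arbitrary such neighbourhood basis.

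Your explicit Hahn--Banach separation for the $\beta$-closed set $V$ is a useful addition. It justifies a step the paper leaves implicit: the paper applies the bipolar theorem, which is phrased with $\overline{\sigma}(X,X^*)$-closures, to sets known only to be closed in the conjugate of the original topology.
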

\begin{proof}
First, we need to show that every $H \in \mathcal{G}$ is $\sigma(X^*, X)$-bounded.
Let $\epsilon > 0$ and $U \subseteq X$ be a neighbourhood of the origin with $H(U) \subseteq ( -\infty, \epsilon )$.
For every $x \in X$, there is clearly some $t > 0$ with $t x \in U$ such that
\begin{equation}
\sup_{\phi \in H} \phi \left( x \right)
=
\frac{1}{t} \sup_{\phi \in H} \phi \left( t x \right)
<
\frac{\epsilon}{t} \, .
\end{equation}
Hence, $H$ is $\sigma(X^*, X)$-bounded and $\mathcal{G}$ indeed defines an asymmetric locally convex topology $\tau'$ on $X$.
Letting $\tau$ denote the original topology on $X$ and $\overline{\tau}$ the conjugate topology, $\tau$ has a neighbourhood basis $\mathcal{N}$ at the origin consisting of convex, $\overline{\tau}$-closed sets \cite[Section 1.1.7]{src:Cobzaş:FunctionalAnalysisinAsymmetricNormedSpaces}.
Moreover, for every $U \in \mathcal{N}$, the polar $U^\circ$ is equicontinuous by \zcref{prop:PolarOfNeighbourhoodIsEquicontinuous} such that $p_{U^\circ}$ is an upper semicontinuous asymmetric seminorm on $\tau'$.
By the bipolar theorem, 
\begin{equation}
p_{U^\circ}^{-1}([0, 1))
\subseteq
p_{U^\circ}^{-1}([0, 1])
=
U \, .
\end{equation}
Since $p_{U^\circ}^{-1}([0, 1))$ is a $0$-neighbourhood in $\tau'$, so is $U$ and thus $\tau \subseteq \tau'$.

Conversely, let $U \subseteq X$ be a $0$-neighbourhood in $\tau'$.
Then, there are equicontinuous subsets $H_1, \dots, H_n \subseteq Y$ and some $\epsilon > 0$ such that
\begin{equation}
\cap_{m = 1}^n p_{H_m}^{-1} \left( [0, \epsilon ) \right)
\subseteq
U \, .
\end{equation}
Clearly, the set $H = \cup_{m = 1}^n H_m$ is equicontinuous as well and $p_H^{-1}([0, \epsilon)) \subseteq U$.
By definition, there is a $\tau$-neighbourhood $V$ of the origin with $V \subseteq p_H^{-1}([0, \epsilon / 2))$ and thus $U$ is a $0$-neighbourhood in $\tau$, i.e. $\tau' \subseteq \tau$.
\end{proof}
\section{Countability Conditions and Weak Compactness}
\label{sec:WeakCompactnessAndCountability}
By taking linear (conic) combinations with rational coefficients, the following is obvious.
\begin{proposition}
Let $X$ and $Y$ be in duality and equip both with any $\mathcal{G}$-topologies.
If $A \subseteq X$ respectively $B \subseteq Y$ is separable, then $\mathrm{span}\, A$ respectively $\mathrm{cone}\, B$ is separable.
\end{proposition}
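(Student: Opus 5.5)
Let $D \subseteq A$ be a countable dense subset (in the respective $\mathcal{G}$-topology) and take as candidate countable dense subset of $\mathrm{span}\, A$ the set $E$ of all finite linear combinations $\sum_{m=1}^n r_m d_m$ with $r_m \in \mathbb{Q}$ and $d_m \in D$. In the cone case, take instead $B$, a countable dense $D \subseteq B$, and nonnegative rational coefficients $r_m \in \mathbb{Q}_{\ge 0}$. In both cases $E$ is a countable union of images of $\mathbb{Q}^n \times D^n$, hence countable. What remains is to show that $E$ is dense.

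I will use that a $\mathcal{G}$-topology is an asymmetric locally convex topology. Addition is then continuous, and so is multiplication by a fixed nonnegative scalar, since $p(r\Delta) = r p(\Delta)$ for $r \ge 0$.

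Given $z = \sum_{m=1}^n r_m a_m$ in the span (or cone) and a basic neighbourhood $U_{z,F,\epsilon}$, I proceed in two steps.

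\begin{enumerate}
\item \emph{Making the coefficients rational.} In the vector space case, replace each $r_m$ by a rational $s_m$ of the same sign and close to $r_m$. The error $(s_m - r_m) a_m$ is a fixed vector scaled by a small real number. Since every $p \in F$ is finite, $p(\pm a_m)$ is finite and $p((s_m - r_m) a_m) \le |s_m - r_m| \max\{p(a_m), p(-a_m)\}$ can be made smaller than $\epsilon / (2n)$. In the cone case use $s_m \ge 0$; then the error is a difference of cone elements, which is why the estimate is carried out in $X$, where every element of $F$ is an asymmetric seminorm defined on the whole vector space $X$.
\item \emph{Approximating the points.} Write the combination with rational coefficients by splitting each $s_m$ into its sign and its modulus, so that only nonnegative scalings and, for negative coefficients, the vector $-a_m$ occur. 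The map $(x_1, \dots, x_n) \mapsto \sum_m s_m x_m$ is continuous at the points with nonnegative $s_m$, so I can choose $d_m \in D$ with $p(s_m(d_m - a_m)) < \epsilon/(2n)$ for all $p \in F$. By subadditivity, the total error then lies in $U_{z,F,\epsilon}$.
\end{enumerate}

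The one real obstacle is negative coefficients in the vector space case. Because the topology is asymmetric, negation is not continuous, so $d_m \to a_m$ does not give $-d_m \to -a_m$. To handle this, I will observe that $\mathrm{span}\, A = \mathrm{cone}(A \cup -A)$. The step then needs $D \cup -D$ to be dense in $A \cup -A$, and that is exactly the point that may fail for asymmetric topologies. I will check whether the $\mathcal{G}$-topology setting rescues it: for instance, whether the intended statement implicitly concerns $\sigma_s$, or whether density should be interpreted so that it holds.

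If it does not, the fallback is to prove the statement in the cone form, with nonnegative rational coefficients, where only continuity of addition and nonnegative scaling are used. In that case I will note that the span is recovered precisely when negation preserves density.
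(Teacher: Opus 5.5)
The obstacle you single out is genuine, and nothing in the $\mathcal{G}$-topology setting removes it. The span half of the statement is false, so your proposal cannot be completed there. The paper's one-line justification (rational linear combinations of a countable dense set) breaks at exactly the step you flagged: density of $D$ in $A$ gives no control of $-D$ near $-A$.

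Here is a counterexample. Let $I$ be uncountable, let $X=\mathbb{R}^{(I)}$ be the finitely supported real functions on $I$, let $Y$ be the cone of finitely supported nonnegative functions, and set $\langle x,y\rangle=\sum_{i\in I}x(i)y(i)$. Testing against the indicators $e_i\in X$ and $\delta_i\in Y$ shows nondegeneracy. In $\sigma(X,Y)$ the sets $\{x\in X : x(k)<z(k)+\epsilon \text{ for } k\in F\}$, with $F\subseteq I$ finite, form a neighbourhood basis at $z$.

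Let $A=\{0\}\cup\{e_i : i\in I\}$. Since $0\le e_i$ pointwise, $0$ lies in every neighbourhood of every $e_i$, so $\{0\}$ is dense in $A$ and $A$ is separable. However, $\mathrm{span}\,A=X$ is not separable. Given any countable $E\subseteq X$, choose $i\in I$ outside the countable union of the supports of the elements of $E$. Then $\{x\in X : x(i)<-\tfrac12\}$ is an open neighbourhood of $-e_i$ disjoint from $E$. This works for either reading of separability of a subset, and each $-e_i$ is even isolated in $-A$.

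What is true is your fallback. The cone half follows from your argument. The span half holds whenever negation is continuous, for example if $A$ is separable for the symmetrised topology: then $D\cup(-D)$ is dense in $A\cup(-A)$, and $\mathrm{span}\,A=\mathrm{cone}\,(A\cup(-A))$.

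One repair is needed in the cone case. There the seminorms are the $q_G$ on $Y$, and boundedness of $G\subseteq X$ only bounds $\sup_{x\in G}\langle x,y\rangle$ for $y\in Y$. So $q_G(-b)$ may be $+\infty$; take $X=\mathbb{R}$, $Y=[0,\infty)$, $G=(-\infty,0]$. Your estimate through $\max\{p(a_m),p(-a_m)\}$ on the whole vector space therefore does not apply. Instead, round the coefficients up: choose rational $s_m\ge r_m$ and write $d_m=b_m+\Delta_m$. Then
\[
\sum_m s_m d_m-\sum_m r_m b_m=\sum_m (s_m-r_m)\,b_m+\sum_m s_m\Delta_m .
\]
Its seminorm values are at most $\sum_m (s_m-r_m)\,q(b_m)+\sum_m s_m\,q(\Delta_m)$, all of which are finite and small. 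It also lies in $Y$ whenever the $\Delta_m$ do, so the estimate is valid whichever way neighbourhoods in a cone are read: the paper's literal definition with $\Delta$ in the cone, or the subspace topology from $\widetilde{Y}$.
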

For the remainder of this section, $X$ and $Y$ are taken to be in duality and are assumed to carry their respective weak topologies.
The following \zcref[noref]{prop:VectorDualitySeparableVersusCountablyCompact} appears to be folklore.
\begin{proposition}
\label{prop:VectorDualitySeparableVersusCountablyCompact}
Suppose $X$ is separable and $Y$ is a vector space.
Then every countably compact subset of $Y$ is compact and metrisable.
\end{proposition}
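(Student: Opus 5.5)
Let $D = \{ x_1, x_2, \dots \} \subseteq X$ be a countable $\sigma(X,Y)$-dense subset and let $K \subseteq Y$ be countably compact in $\sigma(Y,X)$. Since $Y$ is a vector space, $(X,Y)$ is an ordinary dual pair. By \zcref{prop:WeakStarTop}, $\sigma(Y,X)$ is the usual topology of pointwise convergence on $X$. In particular it is Hausdorff and completely regular, and each evaluation $y \mapsto \langle x, y \rangle$ is continuous on $Y$.

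The plan is to apply \zcref{lem:USCSeparation} to $K$ with the countable family
\begin{equation}
F = \left\{ \langle x_n, \cdot \rangle|_K : n \in \mathbb{N} \right\} \cup \left\{ -\langle x_n, \cdot \rangle|_K : n \in \mathbb{N} \right\} ,
\end{equation}
which consists of continuous, hence upper semicontinuous, real functions on $K$. Note that $K$ with the subspace topology is countably compact in itself. This holds because relative countable compactness of $K$ in $K$ is exactly countable compactness: every sequence in $K$ has a cluster point in $K$.

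The key step is to verify the separation hypothesis. Let $y \neq z$ in $K$. By nondegeneracy there is some $x \in X$ with $\langle x, y \rangle \neq \langle x, z \rangle$. Since $Y$ is a vector space, $w = y - z \in Y$ and $\langle \cdot, w \rangle$ is $\sigma(X,Y)$-continuous, in fact linear and continuous in both directions. Because $\langle x, w \rangle \neq 0$, density of $D$ gives some $n$ with $\langle x_n, w \rangle \neq 0$. Hence one of $\pm\langle x_n, \cdot \rangle$ is strictly smaller at $y$ than at $z$. This is the only place where separability and the vector space structure of $Y$ enter. The latter is needed so that $\sigma(X,Y)$ is symmetric and the functional $\langle \cdot, w \rangle$ is continuous, not merely upper semicontinuous. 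Lower semicontinuity is what lets density detect $\langle x, w \rangle \neq 0$ in both signs.

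\zcref{lem:USCSeparation} then yields that $K$ is compact and has a countable network. For metrisability, the cleanest route avoids the network. The map
\begin{equation}
\Phi : K \to \mathbb{R}^{\mathbb{N}}, \quad y \mapsto \left( \langle x_n, y \rangle \right)_{n \in \mathbb{N}}
\end{equation}
is continuous and, by the separation step, injective. Since $K$ is now known to be compact and $\mathbb{R}^{\mathbb{N}}$ is Hausdorff, $\Phi$ is a homeomorphism onto its image, a subspace of a metrisable space. Thus $K$ is metrisable. Alternatively, a compact Hausdorff space with a countable network is metrisable. I expect the only delicate point to be the separation step above. Everything else is direct bookkeeping with \zcref{lem:USCSeparation}.
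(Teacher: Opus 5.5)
Your argument is correct and is essentially the paper's proof: the paper also applies \zcref{lem:USCSeparation} to $F \cup (-F)$ with $F = \{\langle x_n, \cdot \rangle : n \in \mathbb{N}\}$. It uses, as you do, that $\langle \cdot, y \rangle$ is $\sigma(X,Y)$-continuous because $Y$ is a vector space, so density of $(x_n)_{n \in \mathbb{N}}$ gives the required separation. The only difference is the last step: the paper combines the Hausdorffness of $\sigma(Y,X)$ with the countable network and \cite[Theorem 3.1.19]{src:Engelking:GeneralTopology}, whereas your injective continuous map $\Phi$ from the compact set $K$ into $\mathbb{R}^{\mathbb{N}}$ gives metrisability directly and somewhat more elementarily.
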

\begin{proof}
Let $A \subseteq Y$ be countably compact and $(x_n)_{n \in \mathbb{N}}$ a dense sequence in $X$.
For each $n \in \mathbb{N}$, consider the function
\begin{equation}
f_n : Y \to \mathbb{R}, y \mapsto \left< x_n, y \right> \, .
\end{equation}
Dually, we may consider the functions $g_y : X \to \mathbb{R}, x \mapsto \langle x, y \rangle$ for all $y \in Y$.
By definition of $\sigma(X, Y)$, each $g_y$ is upper semicontinuous.
Since $Y$ is a vector space, so is $g_{-y} = - g_y$ such $g_y$ is, in fact, continuous.
Hence, it is immediate that $G = F \cup (-F)$ with $F = \{ f_n : n \in \mathbb{N} \}$ satisfies the conditions of \zcref{lem:USCSeparation}.
Since \zcref{prop:WeakStarTop} implies that $S$ is Hausdorff, the claim follows from \cite[Theorem 3.1.19]{src:Engelking:GeneralTopology}.
\end{proof}
In the asymmetric setting separability is not enough, but a countable basis suffices.
\begin{lemma}
\label{lem:CountableBasisImpliesMetrisableCompactaInDual}
If $X = \mathrm{span}\, E$ for some countable set $E \subseteq X$, every countably compact subset of $Y$ is compact and metrisable.
\end{lemma}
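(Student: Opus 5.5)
Let $A \subseteq Y$ be countably compact in $\sigma(Y,X)$ and let $E = \{e_n : n \in \mathbb{N}\}$ with $X = \mathrm{span}\, E$. Enlarging $E$ to $E \cup (-E)$ keeps it countable, so we may assume $E = -E$. The plan follows the proof of \zcref{prop:VectorDualitySeparableVersusCountablyCompact}, but the continuous evaluations are replaced by the functions
\begin{equation}
f_n : A \to \mathbb{R}, \quad y \mapsto \left< e_n, y \right> \, .
\end{equation}
Because $E = -E$, each $-f_n$ is again some $f_m$. The family $F = \{ f_n : n \in \mathbb{N} \}$ is then fed into \zcref{lem:USCSeparation}.

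\textbf{Step 1: continuity.} By \zcref{prop:WeakStarTop}, $\sigma(Y,X)$ is the topology of pointwise convergence on $X$. Hence every $y \mapsto \langle x, y\rangle$ is continuous on $Y$, and in particular each $f_n$ is (upper semi)continuous on $A$. This is where a countable spanning set is essential. Mere separability of $X$ would give density only for $\sigma(X,Y)$, which is an asymmetric topology, so values $\langle x, y\rangle$ could not be recovered from a dense sequence by approximation. With a spanning set, every $x$ is an exact finite linear combination of elements of $E$.

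\textbf{Step 2: separation.} Let $y \neq z$ in $A$. By nondegeneracy there is some $x \in X$ with $\langle x, y\rangle \neq \langle x, z\rangle$. Write $x = \sum_k r_k e_{n_k}$ with finitely many terms. By linearity in $x$, some $e_{n_k}$ satisfies $\langle e_{n_k}, y\rangle \neq \langle e_{n_k}, z\rangle$. Using $E = -E$, we obtain functions $f, g \in F$ with $f(y) < f(z)$ and $g(z) < g(y)$. The hypothesis of \zcref{lem:USCSeparation} only asks for a one-directional strict inequality for each pair $(x,y)$, and this gives it in both orders. \zcref{lem:USCSeparation} therefore shows that $A$ is compact and has a countable network.

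\textbf{Step 3: metrisability.} The remaining work is to upgrade compactness to metrisability, and I expect this to be the main point. The plan is to use the map $\Phi : A \to \mathbb{R}^{\mathbb{N}}$, $y \mapsto (f_n(y))_n$. It is continuous by Step 1 and injective by Step 2. Its image lies in a metrisable space, and $A$ is compact, so it suffices to know that $A$ is Hausdorff. I will check this directly: the continuous real-valued function $f_n$ that separates two points gives disjoint open preimages. A continuous bijection from a compact space onto a Hausdorff space is a homeomorphism, so $A$ is metrisable.

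Alternatively, one can cite \cite[Theorem 3.1.19]{src:Engelking:GeneralTopology} as in \zcref{prop:VectorDualitySeparableVersusCountablyCompact}, using the countable network together with Hausdorffness. The subtle issue is precisely Hausdorffness, because $\sigma(Y,X)$ on a cone need not look symmetric. The argument above shows that on $A$ it is Hausdorff, since $\sigma(Y,X)$ is the subspace topology of the Hausdorff space $\sigma(\widetilde{Y},X)$.
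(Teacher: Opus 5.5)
Your proof is correct and matches the paper's argument: the paper also feeds the evaluations $f_n$ together with $-f_n$ (your $E = -E$ device) into \zcref{lem:USCSeparation}, with continuity taken from \zcref{prop:WeakStarTop} and point separation from nondegeneracy plus linearity in the first variable. The only difference is the metrisability step. The paper cites \cite[Theorem 3.1.19]{src:Engelking:GeneralTopology} (a compact Hausdorff space with a countable network is metrisable), whereas your embedding $\Phi : A \to \mathbb{R}^{\mathbb{N}}$ is an equally valid elementary alternative; it only needs the target to be Hausdorff, so your separate check that $A$ is Hausdorff is not actually required.
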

\begin{proof}
Let $S \subseteq Y$ be countably compact and $(x_n)_{n \in \mathbb{N}}$ an enumeration of $E$.
By the non-degeneracy, it is clear that whenever there are $y, z \in Y$ with $\langle x_n, y \rangle = \langle x_n, z \rangle$ for all $n \in \mathbb{N}$, then $y = z$.
Hence, the set $F = \{ f_n : n \in \mathbb{N} \}$ with
\begin{equation}
f_n : Y \to \mathbb{R}, y \mapsto \left< x_n, y \right> \, ,
\end{equation}
separates points in $Y$.
Since each $f_n$ is continuous by \zcref{prop:WeakStarTop}, the set $G = F \cup (-F)$ satisfies the conditions of \zcref{lem:USCSeparation}.
Hence, $S$ is compact and has a countable network
\zcref[S]{prop:WeakStarTop} and \cite[Theorem 3.1.19]{src:Engelking:GeneralTopology} together imply the metrisability.
\end{proof}
In the reverse setting, separability suffices, but requires additional separation assumptions.
\begin{theorem}
\label{thm:SeparableDualImpliesQuasimetrisableComplicatedSetsBase}
If $Y$ is separable, every subset $S \subseteq X$ that is $T_1$ and countably compact is compact and has a countable network.
\end{theorem}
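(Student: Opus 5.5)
Let $(y_n)_{n \in \mathbb{N}}$ be a dense sequence in $Y$ with respect to $\sigma(Y, X)$. The plan is to apply \zcref{lem:USCSeparation} to the countably compact space $S$, using the countable family of functions
\begin{equation}
f_n : S \to \mathbb{R}, \quad x \mapsto \left< x, y_n \right> \, .
\end{equation}
Each $f_n$ is upper semicontinuous with respect to $\sigma(X, Y)$, since $\langle \cdot, y_n \rangle \le p_{y_n}$ is an element of the dual by the duality theorem. Its restriction to $S$ is therefore also upper semicontinuous.

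It remains to verify the separation hypothesis of \zcref{lem:USCSeparation}: for any distinct $x, x' \in S$ there should be some $n$ with $f_n(x) < f_n(x')$. Since $S$ is $T_1$ in the subspace topology, $x' \notin \mathrm{cl}\,\{x\}$. By \zcref{prop:WeakTop}, the closure of a point is described by the specialization order, so $x \not\le x'$ in the order on $X$. Hence there is some $y \in Y$ with $\langle x, y \rangle > \langle x', y \rangle$, or, exchanging the roles of $x$ and $x'$ (the $T_1$ property is symmetric), some $y$ with $\langle x, y \rangle < \langle x', y \rangle$.

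One caveat needs care. $T_1$ of $S$ only gives $x' \notin \mathrm{cl}_S \{x\} = S \cap \mathrm{cl}\,\{x\}$, but since $x' \in S$ this is the same as $x' \notin \mathrm{cl}\,\{x\}$. So both strict inequalities $\langle x, y \rangle < \langle x', y \rangle$ and $\langle x, z \rangle > \langle x', z \rangle$ hold for suitable $y, z \in Y$.

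The next step passes from an arbitrary $y$ to a member of the dense sequence. The set $\{ w \in Y : \langle x' - x, w \rangle > 0 \}$ is $\sigma(Y, X)$-open, because by \zcref{prop:WeakStarTop} the weak-$\ast$ topology is pointwise convergence, so $w \mapsto \langle x' - x, w \rangle$ is continuous. This set is nonempty since it contains $y$, so it contains some $y_n$. This yields $f_n(x) < f_n(x')$, which is exactly the hypothesis of \zcref{lem:USCSeparation}.

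The lemma then gives that $S$ is compact, $T_1$ and has a countable network, as claimed. The main point needing attention is precisely this use of $T_1$ to obtain strict separation in both directions. The weak topology is only $T_0$ in general, so without the $T_1$ assumption only one direction of the order inequality would be available and the lemma would not apply.
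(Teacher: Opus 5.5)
Your proof is correct and follows essentially the same route as the paper. You use the $T_1$ property of $S$ to obtain, for each ordered pair of distinct points, some $y\in Y$ with $\langle x,y\rangle<\langle x',y\rangle$, then replace $y$ by a member of a countable $\sigma(Y,X)$-dense set using continuity of $w\mapsto\langle x'-x,w\rangle$, and finally apply \zcref{lem:USCSeparation} to the upper semicontinuous functions $\langle\cdot,y_n\rangle$ on $S$. The only cosmetic difference is that you read off the strict inequality from the specialization order of \zcref{prop:WeakTop}, whereas the paper reads it off directly from a basic neighbourhood, which is the same computation.
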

\begin{proof}
First, note that for every $y \in Y$, the function
\begin{equation}
f_y : X \to \mathbb{R}, x \mapsto \left< x, y \right>
\end{equation}
is upper semicontinuous.
Now, let $x_1, x_2 \in S$ be distinct.
By assumption, each has a neighbourhood not containing the other, such that there are $y_1, \dots, y_n \in Y$ and some $\epsilon > 0$ with
\begin{equation}
\max_{m \in \{1, \dots, n \}} \left< x_2 - x_1, y_m \right>
>
\epsilon \, .
\end{equation}
In particular, there is some $y' \in Y$ with $\langle x_1, y' \rangle < \langle x_2, y' \rangle$.
Letting $D \subseteq Y$ denote a countable $\sigma(Y, X)$-dense set, we see from \zcref{prop:WeakStarTop}, that there is some $y \in D$ with $\langle x_1, y \rangle < \langle x_2, y \rangle$.
Consequently, \zcref{lem:USCSeparation} applies.
\end{proof}
\begin{remark}
It is unknown to the author, if anything useful can be said if $S$ is considered without the $T_1$ assumption.
However, recall that $X$ is $T_0$ by \zcref{prop:WeakTop}.
\end{remark}
\begin{remark}
\label{rem:HausdorffSubsetAndSeparableDualImpliesMetrisability}
If the subset in question is Hausdorff, then metrisability follows \cite[Theorem 3.1.19]{src:Engelking:GeneralTopology}.
\end{remark}
If $X$ is Lindelöf, every closed countably compact set is clearly compact.
Curiously, given the Lindelöf property of $\sigma_s(X, Y)$, the failure of a countably compact set to be compact is connected to the non-closedness in the symmetric topology.
\begin{lemma}
\label{lem:CountableCompactnessImpliesRelativeCompactnessInSymmetricClosure}
If $(X, \sigma_s(X, Y))$ is Lindelöf, every countably compact subset of $X$ is precompact.
Moreover, every countably compact $A \subseteq X$ is relatively compact in $B = \mathrm{cl}_{\sigma_s(X, Y)} A$ with the subspace topology inherited from $X$.
\end{lemma}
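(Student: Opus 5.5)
The plan is to prove both assertions with one device. A closed subset of a Lindelöf space is Lindelöf, so $B = \mathrm{cl}_{\sigma_s(X, Y)} A$ is Lindelöf for the subspace topology of $\sigma_s(X, Y)$. Moreover $\sigma(X, Y) \subseteq \sigma_s(X, Y)$, so every $\sigma(X, Y)$-open set is $\sigma_s(X, Y)$-open. Lindelöfness of $B$ will reduce an arbitrary cover to a countable one, and countable compactness of $A$ will then reduce that to a finite one.

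First I prove the second statement, relative compactness of $A$ in $B$. Let $(O_\alpha \cap B)_{\alpha \in I}$ be a cover of $B$ by sets open in the subspace topology inherited from $\sigma(X, Y)$, where each $O_\alpha$ is $\sigma(X, Y)$-open. Each $O_\alpha \cap B$ is then relatively $\sigma_s(X, Y)$-open in $B$. Since $B$ is $\sigma_s$-Lindelöf, there is a countable subcover $(O_{\alpha_k} \cap B)_{k \in \mathbb{N}}$ of $B$, and in particular of $A \subseteq B$. Now $(O_{\alpha_k})_{k \in \mathbb{N}}$ is a countable $\sigma(X, Y)$-open cover of the countably compact set $A$, so finitely many $O_{\alpha_k}$ already cover $A$. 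This is exactly relative compactness of $A$ in $B$ in the sense of the preliminaries.

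Next I prove precompactness. It suffices to treat the generating entourages $V_{F, \epsilon}$ with $F = \{p_{y_1}, \dots, p_{y_n}\}$ and $\epsilon > 0$, because every $U$ in the quasi-uniformity contains a finite intersection of such sets, and a finite intersection of them is again of this form. Note that $V_{F,\epsilon}[z] = \{ x : \langle x - z, y_i \rangle < \epsilon \text{ for all } i \}$ is $\sigma(X, Y)$-open.
\begin{enumerate}
\item For each $z \in A$, introduce the symmetric set $W_z = \{ x \in X : \vert \langle x - z, y_i \rangle \vert < \epsilon/2 \text{ for all } i \}$, which is $\sigma_s(X, Y)$-open.
\item These sets cover $B$. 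Indeed, if $b \in B$, the $\sigma_s$-neighbourhood $\{ x : \vert \langle x - b, y_i\rangle \vert < \epsilon/2 \}$ of $b$ meets $A$ in some point $z$. Since the condition is symmetric in $x$ and $b$, this gives $b \in W_z$.
\item By Lindelöfness of $B$, countably many $W_{z_k}$ with $z_k \in A$ cover $B \supseteq A$.
\item Each $W_{z_k}$ is contained in the $\sigma(X, Y)$-open set $V_{F,\epsilon}[z_k]$. Hence $(V_{F,\epsilon}[z_k])_{k \in \mathbb{N}}$ is a countable $\sigma(X, Y)$-open cover of $A$.
\item By countable compactness of $A$, finitely many of these cover $A$, which gives the required $a_1, \dots, a_m \in A$ with $A \subseteq \bigcup_j V_{F,\epsilon}[a_j]$.
\end{enumerate}

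The main obstacle is the precompactness part, because the centres must lie in $A$ itself. A direct sequential argument runs into trouble with the asymmetry, since a cluster point does not give control of $\langle a - a_k, y_i \rangle$. Passing through the symmetric neighbourhoods $W_z$, and using that $B$ is the $\sigma_s$-closure of $A$, is exactly what lets the centres be chosen in $A$. The remaining details are routine checks: that the relevant sets are open in the correct topology, and the $\epsilon/2$ bookkeeping.
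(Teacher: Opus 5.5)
Your proof is correct and takes essentially the same approach as the paper. The paper obtains $B \subseteq U[A]$ from the closure lemma, while you reach the same covering with the symmetric $\sigma_s(X,Y)$-open sets $W_z$ centred in $A$; both then pass to a countable subcover by Lindelöfness of $B$ and to a finite one by countable compactness of $A$, and your relative-compactness argument is identical to the paper's (and slightly more careful than it in checking that the covering sets are open in the right topology).
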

\begin{proof}
Let $\mathcal{U}$ denote the canonical quasi-uniformity on $(X, \sigma(X, Y))$ and let $U \in \mathcal{U}$.
Then, $U[A] \supseteq B$ by \zcref{lem:ClosureAsUInveseThickening}.
Since $B$ is $\sigma_s(X, Y)$-closed, $B$ is $\sigma_s(X, Y)$-Lindelöf and thus also $\sigma(X, Y)$-Lindelöf.
Consequently, there is a countable $S \subseteq A$ with $\cup_{a \in S} U[a] \supseteq B \supseteq A$.
Because $A$ is countably compact, there is now a finite $F \subseteq S$ with $\cup_{a \in F} U[a] \supseteq A$.
Similarly, since $B$ is Lindelöf, every open cover of $B$ has a countable subcover and thus a finite subcover of $A$.
\end{proof}
\begin{theorem}
\label{thm:LindelöfCompactnessCondition}
If $(X, \sigma_s(X, Y))$ is Lindelöf, a countably compact subset $A \subseteq X$ is compact if and only if
\begin{equation}
\label{eq:SymmetricClosureContainedInDOwnwardClosure}
\mathrm{cl}_{\sigma_s(X, Y)} A \subseteq \downarrow A \, ,
\end{equation}
where $\downarrow A$ denotes the downward closure of $A$ in the partial order introduced in \zcref{prop:WeakTop}.
\end{theorem}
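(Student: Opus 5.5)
The proof splits into the two implications. Only the sufficiency direction uses the Lindelöf hypothesis, through \zcref{lem:CountableCompactnessImpliesRelativeCompactnessInSymmetricClosure}. The necessity direction is a direct separation argument. Write $B = \mathrm{cl}_{\sigma_s(X, Y)} A$ throughout.

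\emph{Sufficiency.} Assume $B \subseteq \downarrow A$ and let $(U_\alpha)_{\alpha \in I}$ be a family of $\sigma(X, Y)$-open sets covering $A$.
\begin{enumerate}
\item The first step is to show that this family already covers $B$. Let $b \in B$ and pick $a \in A$ with $b \le a$. By \zcref{prop:WeakTop}, $b \le a$ means $a \in \mathrm{cl}\, \{ b \}$. Hence every open set containing $a$, in particular some $U_\alpha$, also contains $b$.
\item So $(U_\alpha)_{\alpha \in I}$ is an open cover of $B$ in the subspace topology. By \zcref{lem:CountableCompactnessImpliesRelativeCompactnessInSymmetricClosure}, $A$ is relatively compact in $B$, so this cover has a finite subcover of $A$. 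Thus $A$ is compact.
\end{enumerate}

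\emph{Necessity.} Suppose $A$ is compact and, for contradiction, that some $b \in B$ does not lie in $\downarrow A$.
\begin{enumerate}
\item For every $a \in A$ we have $b \not\le a$. By the definition of the order, there are $y_a \in Y$ and a rational $c_a$ with $\langle a, y_a \rangle < c_a < \langle b, y_a \rangle$.
\item The set $U_a = \{ x \in X : \langle x, y_a \rangle < c_a \}$ is $\sigma(X, Y)$-open, since $\langle \cdot, y_a \rangle$ is upper semicontinuous. It contains $a$, so $\{ U_a : a \in A \}$ covers $A$.
\item By compactness there are $a_1, \dots, a_n \in A$ with $A \subseteq \cup_{m=1}^n U_{a_m}$.
\item Now consider $W = \{ x \in X : \langle x, y_{a_m} \rangle > c_{a_m} \text{ for } m = 1, \dots, n \}$. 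Each $\langle \cdot, y \rangle$ is upper semicontinuous for the conjugate topology applied to $-x$, i.e.\ lower semicontinuous for $\overline{\sigma}(X, Y)$. Hence $W$ is $\overline{\sigma}(X, Y)$-open, and so $\sigma_s(X, Y)$-open. It contains $b$ by the choice of the $c_a$.
\item Since $b \in \mathrm{cl}_{\sigma_s(X, Y)} A$, there is some $a \in A \cap W$. But $a$ lies in some $U_{a_m}$, so $\langle a, y_{a_m} \rangle < c_{a_m} < \langle a, y_{a_m} \rangle$, a contradiction.
\end{enumerate}

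The only delicate point is checking that $W$ is open in $\overline{\sigma}(X, Y)$. This follows directly from the seminorms $p_y(-\,\cdot)$: $\langle x, y \rangle > c$ is an open condition there, just as $\langle x, y \rangle < c$ is in $\sigma(X, Y)$.
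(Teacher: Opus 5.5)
Your proof is correct and is essentially the paper's argument. For sufficiency, you combine the lemma that $A$ is relatively compact in $\mathrm{cl}_{\sigma_s(X,Y)} A$ with the fact that every $\sigma(X,Y)$-open set containing $a$ also contains every $b \le a$; for necessity, you use compactness of $A$ together with the $\sigma_s(X,Y)$-continuity of $\langle \cdot, y \rangle$. The only difference is that you work with open covers and build an explicit $\sigma_s(X,Y)$-neighbourhood of $b$ disjoint from $A$, whereas the paper uses nets and cluster points and calls the inequality $x \le x'$ obvious, which your separation argument in effect verifies.
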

\begin{proof}
$\Rightarrow$:
Suppose that \zcref{eq:SymmetricClosureContainedInDOwnwardClosure} holds and let $(x_\alpha)_{\alpha \in I}$ be a net in $A$.
By \zcref{lem:CountableCompactnessImpliesRelativeCompactnessInSymmetricClosure}, there is a cluster point $x \in \mathrm{cl}_{\sigma_s(X, Y)} A$ and thus $x \in \downarrow A$.
Hence, there is some $x' \in A$ with $x \in U$ for every open neighbourhood $U$ of $x'$.
Consequently, $x'$ is also a cluster point.

$\Leftarrow$:
Suppose that $A$ is compact and let $(x_\alpha)_{\alpha \in I}$ be a net in $A$ $\sigma_s(X, Y)$-converging to some $x \in \mathrm{cl}_{\sigma_s(X, Y)}$.
By compactness, there is a cluster point $x' \in A$ and it is obvious that $x \le x'$ and thus $x \in \downarrow A$.
\end{proof}
This raises the question, whether the closure in the symmetric topology preserves countable compactness.
\begin{theorem}
\label{thm:LindelöfAndSymmetricClosureImpliesCompactness}
If $(X, \sigma_s(X, Y))$ is Lindelöf, the $\sigma_s(X, Y)$-closure of every countably compact subset of $X$ is compact.
\end{theorem}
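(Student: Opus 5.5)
The plan is to verify compactness of $B = \mathrm{cl}_{\sigma_s(X, Y)} A$ directly with open covers, in the same spirit as \zcref{lem:CountableCompactnessImpliesRelativeCompactnessInSymmetricClosure}. The idea is to thicken neighbourhoods by a \enquote{half-size} neighbourhood, so that the translation structure of $\sigma(X, Y)$ absorbs the $\sigma_s$-approximation of points of $B$ by points of $A$. Compactness is meant with respect to $\sigma(X, Y)$. Since addition is continuous, every $\sigma(X, Y)$-neighbourhood of $b \in X$ contains a set $b + N$, where $N$ is a $\sigma(X, Y)$-neighbourhood of the origin. Such an $N$ contains $N' + N'$ for some open $0$-neighbourhood $N'$, for instance the sets $\{ x : p_y(x) < \epsilon/2 \text{ for } y \in F\}$.

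The first step fixes, for a $\sigma(X, Y)$-open cover $(U_i)_{i \in I}$ of $B$ and each $b \in B$, an index $i(b)$ and an open $0$-neighbourhood $N'_b$ with $b + N'_b + N'_b \subseteq U_{i(b)}$. The sets $(b + N'_b)_{b \in B}$ form a $\sigma(X, Y)$-open cover of $B$. Since $B$ is $\sigma_s(X, Y)$-closed in the Lindelöf space $(X, \sigma_s(X, Y))$, it is $\sigma_s$-Lindelöf and hence $\sigma(X, Y)$-Lindelöf. So there is a countable sequence $(b_k)_{k \in \mathbb{N}}$ in $B$ with $B \subseteq \bigcup_k (b_k + N'_{b_k})$.

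In particular this countable open family covers $A$. By the countable compactness of $A$ there is an $m \in \mathbb{N}$ with $A \subseteq \bigcup_{k=1}^m (b_k + N'_{b_k})$. The final step is to show that $B \subseteq \bigcup_{k=1}^m U_{i(b_k)}$. Let $b \in B$ and choose a net $(a_\alpha)$ in $A$ that converges to $b$ in $\sigma_s(X, Y)$. Since there are only finitely many indices, some fixed $k \le m$ satisfies $a_\alpha \in b_k + N'_{b_k}$ for cofinally many $\alpha$, so we may pass to a subnet along which this holds. The difference $b - a_\alpha$ tends to $0$ in $\sigma_s(X, Y)$, and therefore also in $\sigma(X, Y)$. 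Hence $b - a_\alpha \in N'_{b_k}$ eventually, and then
\begin{equation}
b = a_\alpha + (b - a_\alpha) \in b_k + N'_{b_k} + N'_{b_k} \subseteq U_{i(b_k)} \, .
\end{equation}

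The point needing care is this last step. The required convergence $b - a_\alpha \to 0$ in $\sigma(X, Y)$ uses the symmetric convergence essentially: a $\sigma$-neighbourhood of $b$ has the form $b + N$, whereas $\sigma_s$-closeness of $a_\alpha$ to $b$ gives both $a_\alpha - b$ and $b - a_\alpha$ small. Using only $\sigma$-closure would give the wrong direction, and this is exactly why \zcref{thm:LindelöfCompactnessCondition} needs \zcref{eq:SymmetricClosureContainedInDOwnwardClosure}. As a consistency check, $B$ trivially satisfies \zcref{eq:SymmetricClosureContainedInDOwnwardClosure}, so the result also follows from \zcref{thm:LindelöfCompactnessCondition} once the countable compactness of $B$ is known. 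The cover argument above proves compactness directly and avoids establishing countable compactness separately.
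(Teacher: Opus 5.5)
Your argument is correct, but it is organised differently from the paper's proof.

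\textbf{The paper's route.} The paper first shows that $B=\mathrm{cl}_{\sigma_s(X,Y)}A$ is countably compact. Given a sequence $(b_n)$ in $B$, it picks approximants $a^V_m\in A\cap V[b_m]$ for every symmetric entourage $V$ of $\mathcal{U}_s$. It then applies the preceding lemma (that $A$ is relatively compact in $B$) to get a cluster point $b\in B$ of the net $(a^V_m)$. Via $U\circ U$ it shows that $b$ is a cluster point of $(b_n)$. Compactness then follows because $B$ is Lindel\"of.

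\textbf{Your route.} You run a single open-cover argument:
\begin{enumerate}
\item the halving $N'_b+N'_b\subseteq N_b$ plays the role of $U\circ U$;
\item the Lindel\"of property of $B$ reduces the half-size cover to a countable one;
\item countable compactness of $A$ reduces it to a finite one;
\item approximation transfers that finite cover from $A$ to all of $B$.
\end{enumerate}
This is shorter and self-contained. It needs neither the auxiliary net indexed by $\mathcal{V}\times\mathbb{N}$, nor the earlier lemma, nor the final step ``countably compact and Lindel\"of implies compact''. The paper's version fits its cluster-point framework and gives countable compactness of $B$ as an intermediate result.

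\textbf{Two small remarks.} First, you can drop the net and pigeonhole step. Choose a single $a\in A$ with $b\in a+\bigcap_{k\le m}N'_{b_k}$; this exists because $b-\bigcap_{k\le m}N'_{b_k}$ is a $\overline{\sigma}(X,Y)$-neighbourhood of $b$.

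Second, your closing comment slightly overstates what is used. You need only the $\overline{\sigma}(X,Y)$-half of the convergence $a_\alpha\to b$, i.e.\ $B\subseteq\mathrm{cl}_{\overline{\sigma}(X,Y)}A$, not full symmetric convergence. Your argument therefore proves more: $\mathrm{cl}_{\overline{\sigma}(X,Y)}A$ is itself $\sigma(X,Y)$-compact, since it is $\sigma_s$-closed and hence $\sigma(X,Y)$-Lindel\"of.
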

\begin{proof}
Let $A \subseteq X$ be countably compact and let $(b_n)_{n \in \mathbb{N}}$ be a sequence in $B = \mathrm{cl}_{\sigma_s(X, Y)} A$.
Set
\begin{equation}
\mathcal{V} = \left\{ U \in \mathcal{U}_s : U = U^{-1} \right\} \, .
\end{equation}
For every $V \in \mathcal{V}$ and every $m \in \mathbb{N}$, pick some $a^V_m \in V[b_n]$.
Then $(a^V_m)_{V \in \mathcal{V}, m \in \mathbb{N}}$ becomes a net under the partial order
\begin{equation}
(V, m) \le (W, n)
\iff
V \supseteq W \text{ and } m \le n \, ,
\end{equation}
for all $V, W \in \mathcal{V}$ and all $m, n \in \mathbb{N}$.
By \zcref{lem:CountableCompactnessImpliesRelativeCompactnessInSymmetricClosure}, there is a cluster point $b \in B$, i.e.
\begin{equation}
\forall U \in \mathcal{U} \forall V \in \mathcal{V} \forall m \in \mathbb{N} \exists W \in \mathcal{V}, n \in \mathbb{N} :
V \supseteq W, m \le n \text{ and } a^W_n \in U \left[ b \right] \, .
\end{equation}
Since $W = W^{-1}$, we have
\begin{equation}
b_n \in W \left[ a^W_n \right] \subseteq W \left[ U \left[ b \right] \right] \, .
\end{equation}
By setting $V = U \cap U^{-1}$, we thus have $b_n \in (U \circ U)[b]$.
It follows that $b$ is a cluster point of the sequence $(b_n)_{n \in \mathbb{N}}$.
Hence, $B$ is countably compact and thus compact.
\end{proof}
With suitable countability conditions one does not need to pass to the symmetric closure.
\begin{theorem}
Let $X$ and $Y$ be in duality and consider both with their respective weak topologies.
If $(X, \sigma_s(X, Y))$ has countable tightness, is Lindelöf and $Y$ is separable, every countably compact subset of $X$ is compact.
\end{theorem}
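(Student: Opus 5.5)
By \zcref{thm:LindelöfCompactnessCondition}, it suffices to show that a countably compact $A \subseteq X$ satisfies $\mathrm{cl}_{\sigma_s(X, Y)} A \subseteq \downarrow A$. So fix $b \in B = \mathrm{cl}_{\sigma_s(X, Y)} A$; we must find $a \in A$ with $b \le a$, i.e. $\langle b, y \rangle \le \langle a, y \rangle$ for all $y \in Y$.

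The first step uses countable tightness of $\sigma_s(X, Y)$. It gives a countable set $C = \{c_n : n \in \mathbb{N}\} \subseteq A$ with $b \in \mathrm{cl}_{\sigma_s(X, Y)} C$. Since $\sigma_s(X,Y)$ is the topology of pointwise convergence of the functionals $\langle \cdot, y\rangle$, this means: for every finite $F \subseteq Y$ and every $\epsilon > 0$, some $c_n$ satisfies $|\langle c_n - b, y\rangle| < \epsilon$ for all $y \in F$.

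Next, let $D = \{d_k : k \in \mathbb{N}\}$ be a countable $\sigma(Y, X)$-dense subset of $Y$. We choose a sequence $(c_{n(k)})_{k}$ in $C$ with
\begin{equation}
\left| \left< c_{n(k)} - b, d_j \right> \right| < 1/k \qquad \text{for } j = 1, \dots, k \, .
\end{equation}
Then $\langle c_{n(k)}, d_j \rangle \to \langle b, d_j \rangle$ as $k \to \infty$ for every $j$. By countable compactness of $A$, the sequence $(c_{n(k)})_k$ has a $\sigma(X, Y)$-cluster point $a \in A$. Each $\langle \cdot, d_j \rangle$ is $\sigma(X,Y)$-upper semicontinuous, so at a cluster point we only get $\langle a, d_j\rangle \ge \limsup_k \langle c_{n(k)}, d_j\rangle = \langle b, d_j\rangle$. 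That is the correct direction: $\langle b, d \rangle \le \langle a, d \rangle$ for all $d \in D$.

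The main obstacle is extending this inequality from $D$ to all of $Y$. Fix $y \in Y$. By \zcref{prop:WeakStarTop}, density of $D$ in $\sigma(Y, X)$ means pointwise approximation on $X$. Hence for any $\epsilon > 0$ there is $d \in D$ with $|\langle a, y - d\rangle| < \epsilon$ and $|\langle b, y - d\rangle| < \epsilon$, using the two points $a$ and $b$ simultaneously; a basic neighbourhood in the weak-$\ast$ topology controls finitely many points in both directions since the functionals $y \mapsto \langle x, y\rangle$ are continuous. This yields
\begin{equation}
\langle b, y\rangle < \langle b, d\rangle + \epsilon \le \langle a, d\rangle + \epsilon < \langle a, y \rangle + 2\epsilon \, .
\end{equation}
Letting $\epsilon \to 0$ gives $b \le a$, so $b \in \downarrow A$. \zcref{thm:LindelöfCompactnessCondition} then yields compactness of $A$. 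The Lindelöf hypothesis enters only through that theorem (via \zcref{lem:CountableCompactnessImpliesRelativeCompactnessInSymmetricClosure}).
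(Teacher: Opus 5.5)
Your argument is correct and is essentially the paper's proof. You use countable tightness of $\sigma_s(X,Y)$ to get a countable $C \subseteq A$, pick a sequence in $C$ that approximates $b$ on a countable dense $D \subseteq Y$, take a cluster point $a \in A$ with $\langle b, d\rangle \le \langle a, d\rangle$ on $D$, pass to $b \le a$ by density, and finish with the Lindelöf compactness criterion $\mathrm{cl}_{\sigma_s(X,Y)} A \subseteq \downarrow A$. The differences are only presentational. You build the diagonal sequence by hand where the paper invokes metrisability of $\mathbb{R}^{\mathbb{N}}$ via $x \mapsto (\langle x, y_n\rangle)_n$, and you spell out the two-point density step that the paper compresses into the pointwise-convergence description of $\sigma(Y,X)$. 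One small imprecision: at a mere cluster point, upper semicontinuity only gives $\liminf_k \langle c_{n(k)}, d_j\rangle \le \langle a, d_j\rangle$, not the $\limsup$ bound you state. This is harmless here because $\langle c_{n(k)}, d_j\rangle$ converges.
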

\begin{proof}
Let $A \subseteq X$ be countably compact, set $B = \mathrm{cl}_{\sigma_s(X, Y)} A$ with the subspace topology of $X$ and let $b \in B$ be arbitrary.
Since $(X, \sigma_s(X, Y))$ has countable tightness, there is a countable set $S \subseteq A$ with $b \in \mathrm{cl}_{\sigma_s(X, Y)} S$.
Now, pick a dense sequence $(y_n)_{n \in \mathbb{N}}$ of $Y$ and note that the map
\begin{equation}
\Phi : (X, \sigma_s(X, Y)) \to \mathbb{R}^{\mathbb{N}}, x \mapsto \left( \left< x, y_n \right> \right)_{n \in \mathbb{N}}
\end{equation}
is continuous.
Moreover, since the range is metrisable, there is a sequence $(a'_n)_{n \in \mathbb{N}}$ in $S$ with $\lim_{n \to \infty} \Phi(a'_n) = \Phi(b)$.
Consequently, $\lim_{n \to \infty} \langle a'_n, y_m \rangle = \langle b, y_m \rangle$ for all $m \in \mathbb{N}$.
Since $A$ is countably compact, there is a subnet $(a''_\beta)_{\beta \in J}$ converging to some $c \in A$.
Hence,
\begin{equation}
\left< b, y_m \right>
=
\lim_{\beta} \left< a''_\beta, y_m \right>
\le
\left< c, y_m \right> \, ,
\end{equation}
for all $m \in \mathbb{N}$.
By \zcref{prop:WeakStarTop}, $b \le c$, i.e. $B \subseteq \downarrow A$ and \zcref{thm:LindelöfCompactnessCondition} applies.
\end{proof}
Finally, a simple criterion for countable tightness is identical to the vector space setting.
\begin{proposition}
\label{prop:DualIsSigmaCompactImpliesCountableTightness}
Let $X$ and $Y$ be in duality and $Y = \cup_{n \in \mathbb{N}} M_n$ for some sequence $(M_n)_{n \in \mathbb{N}}$ of $\sigma(Y, X)$-compact subsets of $Y$.
Then $\sigma(X, Y)$ has countable tightness.
\end{proposition}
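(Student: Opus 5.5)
We have to show that $\sigma(X, Y)$ has countable tightness. So let $A \subseteq X$ and $x \in \mathrm{cl}_{\sigma(X,Y)} A$; the goal is a countable $S \subseteq A$ with $x \in \mathrm{cl}\, S$. After translating we may take $x = 0$.

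\textbf{Reformulation.} A basic neighbourhood of $0$ has the form $\{ z : \langle z, y_i \rangle < \epsilon,\ i = 1, \dots, k \}$. Since $Y$ is a cone and $\epsilon$ can be scaled into the $y_i$, the condition $0 \in \mathrm{cl}\, A$ says:
\[
\forall k \in \mathbb{N}\ \forall y_1, \dots, y_k \in Y \ \exists a \in A : \max_{i} \langle a, y_i \rangle < 1 .
\]
Equivalently, for every finite set $F \subseteq Y$ the open sets
\[
W_a = \{ (y_1, \dots, y_k) : \max_i \langle a, y_i \rangle < 1 \}, \qquad a \in A ,
\]
cover $Y^k$.

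\textbf{Openness of $W_a$.} The map $y \mapsto \langle a, y \rangle$ is $\sigma(Y, X)$-continuous by \zcref{prop:WeakStarTop}. Hence each $W_a$ is open in the product topology on $Y^k$.

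\textbf{Choosing $S$.} For fixed $k$ and $n_1, \dots, n_k \in \mathbb{N}$, the product $M_{n_1} \times \dots \times M_{n_k}$ is compact. By the covering above it is covered by the $W_a$, so finitely many $a \in A$ already cover it. Collect these finitely many $a$ over all $k$ and all tuples $(n_1, \dots, n_k)$; since there are countably many tuples, this gives a countable $S \subseteq A$.

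\textbf{Checking $0 \in \mathrm{cl}\, S$.} Given $y_1, \dots, y_k \in Y$, choose $n_i$ with $y_i \in M_{n_i}$. Then $(y_1, \dots, y_k)$ lies in some $W_a$ with $a \in S$. Thus every basic neighbourhood of $0$ meets $S$, so $0 \in \mathrm{cl}\, S$.

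The only point needing care is the reduction of basic neighbourhoods to the normalised form with bound $1$. This uses the positive homogeneity of the pairing in $Y$ (replace $y_i$ by $y_i/\epsilon$) and that $x - a$ small means $\langle a - x, y \rangle < \epsilon$, which the translation $x = 0$ absorbs. No Hausdorffness is needed, because only compactness of products of the $M_n$ (Tychonoff) and continuity of evaluation enter.
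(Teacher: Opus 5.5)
Your proof is correct and follows essentially the paper's argument, which is itself modelled on Fabian's Theorem 3.54: cover finite products of the $\sigma(Y,X)$-compact sets $M_n$ by the open sets on which every coordinate satisfies the neighbourhood inequality, extract finite subcovers, and take the countable union of the chosen points of $A$. The differences are only cosmetic. You normalise $\epsilon$ to $1$ using positive homogeneity in $Y$ instead of indexing over $1/k$, and you use the products $M_{n_1}\times\dots\times M_{n_k}$ instead of assuming the $M_n$ are increasing.
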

\begin{proof}
We mirror the proof given in \cite[Theorem 3.54]{src:Fabian:BanachSpaceTheory} for the symmetric setting.
Let $A \subseteq X$ be nonempty, $\bar{a} \in \mathrm{cl}_{\sigma(X,Y)} A$ and without loss of generality, suppose that $M_n \subseteq M_{n+1}$ for each $n \in \mathbb{N}$.
Now, for every $m, n, k \in \mathbb{N}$ and every $n$-tuple $T = (y_1, \dots, y_n) \in M_m^n$, pick some $a_T \in A$ with
\begin{equation}
a_T \in \left \{ x \in X \middle| \forall i \in \left \{ 1, \dots, n \right \} : \left< x - \bar{a}, y_i \right> < \frac{1}{k} \right\} \, .
\end{equation}
The set $V_T = \{ y \in M_m : \langle a_T - \bar{a}, y \rangle < 1/k \}$ is open in $M_m$ with the subspace topology induced by $\sigma(Y, X)$.
Furthermore, $y_1, \dots, y_n \in V_T$, such that $V_T^n$ is an open neighbourhood of $T \in M_m^n$.
Proceeding similarly for every $n$-tuple in $M_m$, we obtain an open cover of $M_m^n$ which has a finite subcover indexed by a finite family $T_1, \dots, T_j \in M_m^n$.
Setting $A_{m,n,k} = \{ a_{T_1}, \dots, a_{T_j} \}$ and $B = \cup_{m,n,k \in \mathbb{N}} A_{m,n,k}$ we have $\bar{a} \in \mathrm{cl}_{\sigma(X,Y)} B$ by construction.
\end{proof}
\section{The space \texorpdfstring{$C_a(K)$}{Cₐ(K)}}
\label{sec:CaK}
There is extensive literature on the space of continuous functions on a given topological space with the topology of pointwise convergence (see e.g. \cite{src:Arkhangelskii1:TopologicalFunctionSpaces}).
In this section, some basic properties of an asymmetric version thereof are analysed.
Because this appears to be the first study of such spaces, no attempt is being made to extend the very general results known in the literature.
Instead, the aim is to uncover some easily accessible properties that do or do not carry over to the asymmetric setting.
Hopefully, this can provide some intuition and a basis for more general studies.

Let $K$ denote a compact Hausdorff space and $C(K)$ the linear space of real-valued continuous functions on $K$.
Moreover, for each $x \in K$, let
\begin{equation}
\delta_x : C(K) \to \mathbb{R}, f \mapsto f \left( x \right) \, .
\end{equation}
Set $\widehat{K} = \{ \delta_x : x \in K \}$, $L = \mathrm{cone}\, \widehat{K}$ and note that $C(K)$ and $L$ are in duality in the obvious way.
Let $C_a(K)$ denote the space $C(K)$ equipped with the $\sigma(C(K), L)$-topology (\enquote{a} for \enquote{asymmetric}).
This topology is similar to the one of $C_p(K)$, i.e. the topology of pointwise convergence.
\begin{corollary}
\label{cor:CaKConvergence}
A net $(f_\alpha)_{\alpha \in I}$ in $C_a(K)$ converges to $f \in C_a(K)$ if and only if for all $x \in K$,
\begin{equation}
\limsup_\alpha f_\alpha \left( x \right) \le f \left( x \right) \, .
\end{equation}
\end{corollary}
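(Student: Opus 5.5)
The plan is to reduce the statement to the general characterisation of $\sigma(X,Y)$-convergence established after the identification $Y \simeq (X,\sigma(X,Y))^*$. That remark says a net $(x_\alpha)_{\alpha\in I}$ $\sigma(X,Y)$-converges to $x$ if and only if $\limsup_\alpha \langle x_\alpha, y\rangle \le \langle x, y\rangle$ for all $y \in Y$. Here $X = C(K)$ and $Y = L = \mathrm{cone}\,\widehat{K}$, so it suffices to show that testing against all $y \in L$ is equivalent to testing against the point evaluations $\delta_x$, $x \in K$.

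For the direct argument from the definition, a neighbourhood basis of $f$ in $\sigma(C(K),L)$ consists of the sets $\{g : p_{y_i}(g - f) < \epsilon,\ i=1,\dots,n\}$ with $y_i \in L$. Convergence $f_\alpha \to f$ therefore means $\limsup_\alpha \max\{\langle f_\alpha - f, y\rangle, 0\} = 0$ for every $y \in L$, which is the same as $\limsup_\alpha \langle f_\alpha, y\rangle \le \langle f, y\rangle$ for every $y \in L$.

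For \enquote{only if}, take $y = \delta_x \in \widehat{K} \subseteq L$; this gives $\limsup_\alpha f_\alpha(x) \le f(x)$ immediately.

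For \enquote{if}, write an arbitrary $y \in L$ as $y = \sum_{m=1}^n r_m \delta_{x_m}$ with $r_m \ge 0$. Subadditivity of $\limsup$ and positive homogeneity give
\begin{equation}
\limsup_\alpha \langle f_\alpha, y\rangle \le \sum_{m=1}^n r_m \limsup_\alpha f_\alpha(x_m) \le \sum_{m=1}^n r_m f(x_m) = \langle f, y\rangle .
\end{equation}

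The only subtle point is that subadditivity of $\limsup$ requires the individual limsups not to be $+\infty$ in an indeterminate way. Under the hypothesis each $\limsup_\alpha f_\alpha(x_m) \le f(x_m) < \infty$. A limsup equal to $-\infty$ only helps the inequality, so no $\infty - \infty$ issue arises. Apart from this check the argument is routine.
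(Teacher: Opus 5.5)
Your proposal is correct and is essentially the argument the paper leaves implicit: the corollary is the $\limsup$ characterisation of $\sigma(X,Y)$-convergence specialised to $X = C(K)$, $Y = L = \mathrm{cone}\,\widehat{K}$, reduced from arbitrary conic combinations $\sum_m r_m \delta_{x_m}$ to single point evaluations via positive homogeneity and subadditivity of $\limsup$. Your check that no $\infty - \infty$ arises is the right one; you can also simply discard terms with $r_m = 0$ to avoid writing $0 \cdot (-\infty)$.
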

\begin{corollary}
$C_a(K)$ is hyperconnected and ultraconnected (see e.g. \cite[p. 29]{src:SteenSeebach:CounterexamplesInTopology}).
\end{corollary}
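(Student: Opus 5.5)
Recall the definitions from the cited reference (Steen--Seebach): a space is \emph{hyperconnected} if every two nonempty open sets intersect, and \emph{ultraconnected} if every two nonempty closed sets intersect. The plan is to exploit \zcref{cor:CaKConvergence}: in $C_a(K)$, convergence only controls values from above. Consequently, open sets are \enquote{upward closed} and closed sets are \enquote{downward closed} in a strong sense. Concretely, I would use the specialization order of \zcref{prop:WeakTop}, which for $C_a(K)$ is the pointwise order: $f \le g$ iff $f(x) \le g(x)$ for all $x \in K$, since the duality is against $L = \mathrm{cone}\,\widehat{K}$. By \zcref{prop:WeakTop}, $f \le g$ means $g \in \mathrm{cl}\,\{f\}$. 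Hence every open set $U$ is upward closed: if $f \in U$ and $f \le g$, then $f \in \mathrm{cl}\,\{g\}$? Care is needed with the direction. From $g \in \mathrm{cl}\,\{f\}$, every neighbourhood of $g$ contains $f$, so closed sets are downward closed and open sets are upward closed. I would double-check this with the net characterisation: the constant net $f$ converges to $g$ whenever $f \le g$, so any closed set containing $f$ contains every $g \ge f$. That contradicts what I just said, so the correct statement is this: closed sets are upward closed and open sets are downward closed. I would settle the direction by this constant-net argument rather than by the abstract order.

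For ultraconnectedness, take nonempty closed sets $A \ni f$ and $B \ni g$. The function $h = \max\{f, g\} \in C(K)$ dominates both, so the constant net at $f$ converges to $h$ by \zcref{cor:CaKConvergence}, and likewise for $g$. Closedness then gives $h \in A \cap B$.

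For hyperconnectedness, take nonempty open sets $U \ni f$ and $V \ni g$. Open sets are downward closed: if $f \in U$, $h \le f$ and $h \notin U$, then $h$ lies in the closed complement, and the constant net at $h$ converges to $f$, forcing $f \notin U$, a contradiction. Thus $\min\{f, g\} \in U \cap V$.

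The only real obstacle is getting the direction of the order right; compactness of $K$ is used only to ensure that $\max$ and $\min$ of continuous functions remain in $C(K)$.
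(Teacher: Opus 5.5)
Your argument is correct and is essentially the paper's proof. The paper likewise observes that $\min\{f,g\}$ lies in every neighbourhood of both $f$ and $g$, and that $\max\{f,g\}$ lies in $\mathrm{cl}\,\{f\} \cap \mathrm{cl}\,\{g\}$; your constant-net argument via the $\limsup$ characterisation of convergence justifies both cleanly.

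In a write-up, delete the intermediate sentence that gets the direction wrong: the correct statement is that closed sets are upward closed and open sets are downward closed. Also drop the remark about compactness of $K$. It plays no role here, since $\max$ and $\min$ of continuous functions are continuous on any space.
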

\begin{proof}
Letting $f, g \in C_a(K)$ be any two functions, it is clear that $\min\{ f, g \}$ lies in every neighbourhood of both $f$ and $g$.
Similarly, $\max\{ f, g \} \subseteq \overline{\{ f \}} \cap \overline{ \{ g \} }$.
\end{proof}
\begin{corollary}
$\sigma_s( C(K), L )$ coincides with the topology of $C_p(K)$, i.e. that of pointwise convergence.
\end{corollary}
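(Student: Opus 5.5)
The statement is that $\sigma_s(C(K), L)$, the smallest topology containing $\sigma(C(K), L)$ and its conjugate $\overline{\sigma}(C(K), L)$, equals the pointwise topology of $C_p(K)$. I would compare the two topologies through their subbases at an arbitrary point, using that both are translation invariant.

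\textbf{Subbasis of $\sigma_s$.} A subbasis for $\sigma(C(K), L)$ consists of the sets $U_{f, \{p_y\}, \epsilon}$ for $y \in L$. Using the seminorms $p_y$ of \zcref{prop:WeakTop}, such a set is
\begin{equation}
\left\{ g \in C(K) : \left< g - f, y \right> < \epsilon \right\} .
\end{equation}
For $\overline{\sigma}(C(K), L)$ the subbasic sets are the same with $g - f$ replaced by $f - g$. Hence $\sigma_s$ is generated by the sets
\begin{equation}
\left\{ g : \left| \left< g - f, y \right> \right| < \epsilon \right\}
\end{equation}
(intersecting the two one-sided sets), together with the one-sided sets themselves. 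Equivalently, a net converges in $\sigma_s$ exactly when it converges in both topologies. By the characterisation after \zcref{cor:XandXDualAreInDuality}, this means $\lim_\alpha \langle f_\alpha, y \rangle = \langle f, y \rangle$ for every $y \in L$.

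\textbf{Reduction to point evaluations.} Every $y \in L$ is a finite nonnegative combination $\sum_i r_i \delta_{x_i}$. So $\lim_\alpha \langle f_\alpha, y\rangle = \langle f, y\rangle$ for all $y \in L$ holds if and only if $f_\alpha(x) \to f(x)$ for every $x \in K$. One direction takes $y = \delta_x$; the other uses linearity of finite sums of limits.

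\textbf{Conclusion and main obstacle.} Since both topologies are determined by their convergent nets, they coincide. The only real subtlety is justifying that convergence in the supremum topology $\sigma_s$ is exactly convergence in each of $\sigma$ and $\overline{\sigma}$. This holds because a basic $\sigma_s$-neighbourhood of $f$ is an intersection $U \cap V$ of a $\sigma$-neighbourhood and a $\overline{\sigma}$-neighbourhood of $f$. Alternatively, one can avoid nets: directly compare the basic neighbourhoods $\{ g : |g(x_i) - f(x_i)| < \epsilon,\ i = 1, \dots, n\}$ of $C_p(K)$ with the finite intersections above, and note that each $\langle \cdot, y\rangle$ is a finite positive combination of the $\delta_{x_i}$ and hence $C_p(K)$-continuous.
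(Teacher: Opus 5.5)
Your proof is correct and takes essentially the route the paper leaves implicit, since the corollary is stated there without proof: the net characterisation of $\sigma(C(K),L)$-convergence, $\limsup_\alpha f_\alpha(x)\le f(x)$ for all $x\in K$ (\zcref{cor:CaKConvergence}), combined with its conjugate version $\liminf_\alpha f_\alpha(x)\ge f(x)$, gives exactly pointwise convergence. Your alternative subbasis comparison says the same thing without nets: you show $\sigma_s(C(K),L)=\sigma(C(K),\widetilde{L})$, and each $\langle\cdot,y\rangle$ with $y\in L$ is a finite nonnegative combination of point evaluations.
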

It is well-known that $C_p(K)$ is K-analytic and thus Lindelöf.
\begin{corollary}
$C_a(K)$ is Lindelöf.
\end{corollary}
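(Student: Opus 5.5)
The claim that $C_a(K)$ is Lindelöf follows by comparing it with the symmetric topology. By the preceding corollary, $\sigma_s(C(K), L)$ is exactly the topology of $C_p(K)$. By construction, $\sigma_s(C(K), L)$ is the smallest topology containing both $\sigma(C(K), L)$ and its conjugate. In particular, $\sigma(C(K), L) \subseteq \sigma_s(C(K), L)$, so the identity map $C_p(K) \to C_a(K)$ is continuous.

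The key step is the elementary fact that the Lindelöf property passes to coarser topologies, or more generally to continuous images. Let $(U_\alpha)_{\alpha \in I}$ be a cover of $C(K)$ by $\sigma(C(K), L)$-open sets. Each $U_\alpha$ is also $\sigma_s(C(K), L)$-open, so this is an open cover of $C_p(K)$. Since $C_p(K)$ is Lindelöf, there is a countable subfamily covering $C(K)$, and that subfamily is the required countable subcover in $C_a(K)$.

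The only ingredient beyond this routine argument is the Lindelöf property of $C_p(K)$ for compact Hausdorff $K$. The paper cites this as well known, via K-analyticity of $C_p(K)$, and I would invoke it directly rather than reprove it. So there is no real obstacle beyond citing that result correctly. I would also remark that, combined with the preceding corollary, this shows $(C(K), \sigma_s(C(K), L))$ is Lindelöf. That is precisely the hypothesis of \zcref{lem:CountableCompactnessImpliesRelativeCompactnessInSymmetricClosure}, \zcref{thm:LindelöfCompactnessCondition} and \zcref{thm:LindelöfAndSymmetricClosureImpliesCompactness}, so these results become available for $C_a(K)$.
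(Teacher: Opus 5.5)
Your reduction (that $\sigma(C(K),L)\subseteq\sigma_s(C(K),L)$, and that the Lindelöf property passes to coarser topologies) is exactly the argument the paper intends, and that step is fine. The gap is the ingredient you cite. It is \emph{not} true that $C_p(K)$ is K-analytic, or even Lindelöf, for every compact Hausdorff $K$; K-analyticity of $C_p(K)$ holds only for the so-called Talagrand compacta. The paper's own space $K=[0,\omega_1]$ is a counterexample. The $C_p(K)$-open sets $V=\{f : f(\omega_1)<1\}$ and $U_\alpha=\{f : f(\alpha)>0\}$, $\alpha<\omega_1$, cover $C(K)$, because $f(\omega_1)\ge 1$ forces $f>0$ on a tail of $[0,\omega_1)$. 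However, given countably many $\alpha_n$, the successor ordinal $\gamma=\sup_n\alpha_n+1<\omega_1$ makes $\mathbf{1}_{[\gamma,\omega_1]}$ continuous, and it lies in none of $V,U_{\alpha_1},U_{\alpha_2},\dots$. For the same reason your closing remark fails in general: $\sigma_s(C(K),L)$ need not be Lindelöf. Indeed, the countably compact, non-compact $A\subseteq C_a([0,\omega_1])$ built in the paper is $C_p$-closed, so part (iv) of \zcref{prop:CountablyCompactIsCompactCharacterisation} fails for it. Your route does work when, e.g., $K$ is metrisable, since then a countable base of the sup-norm topology is a countable network for $C_p(K)$.

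The corollary itself is true and needs no information about $C_p(K)$. By \zcref{prop:WeakTop} and \zcref{cor:SpecialisationOrderInCaKCoincidesWithFunctionOrder}, $g\le f$ pointwise implies $f\in\mathrm{cl}\,\{g\}$. Hence every $C_a(K)$-open set containing $f$ contains every $g\le f$. This is also visible directly, since basic neighbourhoods of $f$ are finite intersections of sets $\{g : \sum_i t_i(g(x_i)-f(x_i))<\epsilon\}$ with $t_i\ge 0$. Now take an open cover $(U_\alpha)_{\alpha\in I}$ and pick $\alpha_n$ with the constant function $n$ in $U_{\alpha_n}$. Every $f\in C(K)$ is bounded on the compact $K$, so $f\le n$ for some $n\in\mathbb{N}$, and thus $f\in U_{\alpha_n}$. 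So $(U_{\alpha_n})_{n\in\mathbb{N}}$ is a countable subcover. In fact each $\{f : f\le n\}$ is compact, so $C_a(K)$ is even $\sigma$-compact. Note that this yields Lindelöfness of $\sigma(C(K),L)$ only, not of $\sigma_s(C(K),L)$.
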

\begin{corollary}
\label{cor:KToKHatHomeomorphism}
The mapping $K \to \widehat{K}: x \mapsto \delta_x$ is a homeomorphism when $\widehat{K}$ is equipped with $\sigma(L, C(K))$-topology.
\end{corollary}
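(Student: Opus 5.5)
The map $\iota : K \to \widehat{K}$, $x \mapsto \delta_x$, is a continuous bijection from a compact space. So the plan is to verify bijectivity and continuity, and then show that $\widehat{K}$ is Hausdorff. The standard fact that a continuous bijection from a compact space onto a Hausdorff space is a homeomorphism would then finish the argument.

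\emph{Bijectivity.} Surjectivity holds by the definition of $\widehat{K}$. For injectivity, take distinct $x, x' \in K$. By Urysohn's lemma, which applies since compact Hausdorff spaces are normal, there is $f \in C(K)$ with $f(x) = 0$ and $f(x') = 1$. Hence $\delta_x \neq \delta_{x'}$.

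\emph{Continuity.} By \zcref{prop:WeakStarTop}, the topology $\sigma(L, C(K))$ is the topology of pointwise convergence on $C(K)$. If a net $x_\alpha \to x$ in $K$, then $f(x_\alpha) \to f(x)$ for every $f \in C(K)$, because each $f$ is continuous. Thus $\delta_{x_\alpha} \to \delta_x$.

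\emph{Hausdorffness of $\widehat{K}$.} This is the only point where the asymmetry could cause trouble. However, \zcref{prop:WeakStarTop} together with the subsequent remark says that $\sigma(L, C(K))$ is the subspace topology induced by $\sigma(\widetilde{L}, C(K))$. The latter is the weak topology of an ordinary dual pair of vector spaces, so it is Hausdorff. Alternatively, one can argue directly: pointwise convergence in $\mathbb{R}^{C(K)}$ is Hausdorff, and $L$ embeds into this product by nondegeneracy. Either way, $\widehat{K}$ is Hausdorff, and the compact-to-Hausdorff argument applies.

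If the conclusion is to be argued by hand instead, one can check directly that $\iota$ is closed. For a closed set $C \subseteq K$, the set $C$ is compact, so $\iota(C)$ is compact in the Hausdorff space $\widehat{K}$, and therefore closed. No step here is really an obstacle. The only care needed is in stating the Hausdorffness of the weak-$\ast$ topology correctly, since $L$ is merely a cone.
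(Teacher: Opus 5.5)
Your proof is correct and follows essentially the paper's route. The paper likewise uses \zcref{prop:WeakStarTop} to identify $\sigma(L, C(K))$ on $\widehat{K}$ with the subspace topology of the ordinary weak-$\ast$ topology $\sigma(\widetilde{L}, C(K))$, and then cites the symmetric result \cite[Lemma 3.55]{src:Fabian:BanachSpaceTheory}; you instead prove that cited lemma directly, via Urysohn's lemma for injectivity, continuity of the evaluations, and the standard fact that a continuous bijection from a compact space onto a Hausdorff space is a homeomorphism.
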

\begin{proof}
By \zcref{prop:WeakStarTop}, the $\sigma(L, C(K))$-topology on $\widehat{K}$ coincides with the $\sigma(\widetilde{L}, C(K))$-topology.
Hence, the claim follows from \cite[Lemma 3.55]{src:Fabian:BanachSpaceTheory}.
\end{proof}
\begin{corollary}
\label{cor:SpecialisationOrderInCaKCoincidesWithFunctionOrder}
For $f, g \in C_a(K)$, $f \le g$ in the sense of \zcref{prop:WeakTop} if and only if $f \le g$ as functions.
\end{corollary}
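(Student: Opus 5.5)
By \zcref{prop:WeakTop}, $f \le g$ in the specialisation sense means $\langle f, y \rangle \le \langle g, y \rangle$ for every $y \in L$. The plan is to reduce both directions to point evaluations, using that $L$ is generated as a cone by $\widehat{K}$.

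For the forward implication, assume $\langle f, y \rangle \le \langle g, y \rangle$ for all $y \in L$. Specialising to $y = \delta_x \in \widehat{K} \subseteq L$ for each $x \in K$ gives $f(x) = \langle f, \delta_x \rangle \le \langle g, \delta_x \rangle = g(x)$. Hence $f \le g$ pointwise.

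For the converse, assume $f(x) \le g(x)$ for all $x \in K$. Every $y \in L = \mathrm{cone}\, \widehat{K}$ can be written as $y = \sum_{m=1}^n r_m \delta_{x_m}$ with $r_m \ge 0$ and $x_m \in K$. By bilinearity of the pairing in nonnegative combinations,
\begin{equation}
\langle f, y \rangle = \sum_{m=1}^n r_m f(x_m) \le \sum_{m=1}^n r_m g(x_m) = \langle g, y \rangle \, .
\end{equation}
This is exactly $f \le g$ in the order of \zcref{prop:WeakTop}.

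There is essentially no obstacle. The only point to check is that $\mathrm{cone}$ means nonnegative finite linear combinations, possibly including $0$, which is harmless. One could also note via \zcref{cor:CaKConvergence} that $g \in \mathrm{cl}\,\{f\}$ iff the constant net $f$ converges to $g$ iff $f \le g$ pointwise, giving the same conclusion directly.
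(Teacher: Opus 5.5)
Your proof is correct and matches the paper's argument, which is left implicit since the corollary is stated without proof. The paper simply unwinds the order from \zcref{prop:WeakTop} with $Y = L = \mathrm{cone}\,\widehat{K}$, as you do: test against $\delta_x$ in one direction, and use nonnegative combinations of point evaluations in the other. Your alternative via \zcref{cor:CaKConvergence}, with the constant net $f$ converging to $g$, is also valid.
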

An important property of the space $C_p(K)$ is its angelicity.
In particular, the class of (relatively) countably compact subsets coincides with the classes of (relatively) compact respectively (relatively) sequentially compact subsets.
A similar property holds for $C_a(K)$.
\begin{lemma}
\label{lem:CaKRelativelyCountablyCompactSubnetBounded}
Let $A \subseteq C_a(K)$ be relatively countably compact.
Then, for every net $(f_\alpha)_{\alpha \in I}$ in $A$ there is a cofinal, directed subset $J \subseteq I$  such that
\begin{equation}
\sup_{x \in K} \limsup_{\beta \in J} f_\beta \left( x \right) < \infty \, .
\end{equation}
\end{lemma}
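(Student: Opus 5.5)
The plan is to argue by contradiction and build a sequence in $A$ that has no cluster point in $C_a(K)$. That sequence will contradict relative countable compactness through the sequential characterisation in the first corollary of \zcref{sec:Preliminaries}.

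The key observation concerns sequences. Let $(g_m)_{m \in \mathbb{N}}$ be a sequence in $A$ and $(x_k)_{k \in \mathbb{N}}$ points of $K$ with
\begin{equation}
\forall k \in \mathbb{N} \, \forall m \ge k : g_m \left( x_k \right) > k \, .
\end{equation}
Then $(g_m)$ has no cluster point. Indeed, by \zcref{cor:CaKConvergence}, a basic neighbourhood of $g \in C(K)$ has the form $\{ h : h(y_i) < g(y_i) + \epsilon, \ i = 1, \dots, n \}$. Since $g$ is continuous on the compact space $K$, it is bounded. Choose $k > \sup_K g + 1$. Then the neighbourhood $\{ h : h(x_k) < g(x_k) + 1 \}$ contains no $g_m$ with $m \ge k$, so $g$ cannot be a cluster point.

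It therefore suffices to construct such $g_m = f_{\alpha_m}$ and $x_k$ from the assumption that every cofinal directed $J \subseteq I$ satisfies $\sup_{x \in K} \limsup_{\beta \in J} f_\beta(x) = \infty$. The recursion runs as follows.
\begin{enumerate}
\item Start with $J_0 = I$.
\item Given $J_{k-1}$, the assumption yields some $x_k$ with $\limsup_{\beta \in J_{k-1}} f_\beta(x_k) > k$.
\item Let $J_k$ be a cofinal directed subset of $\{ \beta \in J_{k-1} : f_\beta(x_k) > k \}$; this set is at least cofinal in $J_{k-1}$.
\item Pick $\alpha_k \in J_k$.
\end{enumerate}
Because $J_m \subseteq J_k$ for $m \ge k$, we get $f_{\alpha_m}(x_k) > k$ for all $m \ge k$, which is exactly the configuration above. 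Cofinality is not even needed for the sequence itself, only nonemptiness at each stage, together with the fact that the hypothesis continues to apply to $J_k$.

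The main obstacle is step (iii): a cofinal subset of a directed set need not be directed, so it is not clear that $\{ \beta \in J_{k-1} : f_\beta(x_k) > k \}$ contains a cofinal directed subset to which the hypothesis can be reapplied. My first attempt avoids directedness entirely. The construction only uses the weaker property ``$\sup_x \limsup_{\beta \in J} f_\beta(x) = \infty$'', where $\limsup$ over an arbitrary cofinal subset $J$ is defined as $\inf_{\alpha \in I} \sup_{\beta \in J, \beta \ge \alpha} f_\beta(x)$. So I would prove the stronger statement: there is a cofinal subset $J$ on which this quantity is finite. Such a $J$ can then be upgraded to a directed one by restricting to a tail. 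Alternatively, one passes to the subnet indexed by pairs $(\alpha, \beta)$ with $\beta \in J$, $\beta \ge \alpha$, which is how ``cofinal directed'' would be interpreted.

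With the tail-based $\limsup$, intersecting with $\{ f_\beta(x_k) > k \}$ keeps cofinality in $I$, so the recursion goes through. Relative countable compactness of $A$ then forces finiteness already at some finite stage $k$, and the corresponding $J_{k-1}$ is the sought set.
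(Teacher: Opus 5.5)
Your argument is correct and is essentially the paper's proof: nested cofinal sets $J_k$ obtained by repeatedly applying the negated claim, a diagonal sequence $(f_{\alpha_k})_{k \in \mathbb{N}}$, and a contradiction with the boundedness on the compact space $K$ of any continuous function that could serve as a cluster point. The obstacle you raise in step (iii) does not exist: a cofinal subset $J$ of a directed set $I$ is automatically directed (for $a, b \in J$ take an upper bound $c \in I$ and then $d \in J$ with $d \ge c$), so you may take $J_k = \{ \beta \in J_{k-1} : f_\beta(x_k) > k \}$ directly, as the paper does, and both the tail-based $\limsup$ detour and the pair-indexed subnet are unnecessary.
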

\begin{proof}
Suppose the claim is false and fix some $x_1 \in K$ with $\limsup_\alpha f_\alpha(x_1) > 1$.
Then $J_1 = \{ \alpha \in I : f_\alpha(x_1) \ge 1 \}$ is cofinal and directed and $(f_\beta)_{\beta \in J_1}$ a subnet with $f_\beta(x_1) \ge 1$ for every $\beta \in J_1$.
By induction, there is a sequence $(x_n)_{n \in \mathbb{N}}$ in $K$ and for every $n \in \mathbb{N}$ a cofinal and directed set $J_n \subseteq I$ such that $g_\beta(x_m) \ge m$ for every $\beta \in J_n$ and $m \in \{ 1, \dots, n \}$.
Now, for every $n \in \mathbb{N}$, pick some $\alpha(n) \in J_n$ and consider the sequence $(f_{\alpha(n)})_{n \in \mathbb{N}}$.
By assumption, it has a cluster point $f \in C_a(K)$ and thus $f( x_m ) \ge m$ for every $m \in \mathbb{N}$.
Then, $\sup_{x \in K} f(x) = \infty$ contradicting the fact that the continuous function $f$ is bounded on the compact set $K$.
\end{proof}
\begin{corollary}
\label{cor:RelativeCompactnessEquivalences}
Let $A \subseteq C_a(K)$.
Then the following are equivalent:
\begin{enumerate}
\item $A$ is relatively compact
\item $A$ is relatively countably compact
\item $A$ is relatively sequentially compact
\end{enumerate}
\end{corollary}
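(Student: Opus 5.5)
The plan is to close the cycle (i)$\Rightarrow$(ii)$\Rightarrow$(i) and (ii)$\Rightarrow$(iii)$\Rightarrow$(ii). Both implications (ii)$\Rightarrow$(i) and (ii)$\Rightarrow$(iii) will use \zcref{lem:CaKRelativelyCountablyCompactSubnetBounded} together with one observation: in $C_a(K)$ a large constant function is a limit of any net that is eventually bounded above pointwise in the $\limsup$ sense. Concretely, if $(f_\beta)_{\beta \in J}$ is a net with $M := \sup_{x \in K} \limsup_{\beta \in J} f_\beta(x) < \infty$, then the constant function $c_M \equiv M$ lies in $C(K)$. Moreover $\limsup_\beta f_\beta(x) \le c_M(x)$ for all $x \in K$, so by \zcref{cor:CaKConvergence} the net converges to $c_M$ in $C_a(K)$.

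The two trivial implications go as follows. For (i)$\Rightarrow$(ii), every countable open cover of $C_a(K)$ is in particular an open cover, so it has a finite subcover of $A$. For (iii)$\Rightarrow$(ii), given a sequence in $A$, relative sequential compactness yields a subsequence converging to some $f \in C_a(K)$. Then $f$ is a cluster point of the original sequence, and the characterisation of relative countable compactness by cluster points of sequences (the first corollary of \zcref{sec:Preliminaries}) gives (ii).

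For (ii)$\Rightarrow$(i), let $(f_\alpha)_{\alpha \in I}$ be a net in $A$. \zcref{lem:CaKRelativelyCountablyCompactSubnetBounded} provides a cofinal directed $J \subseteq I$ with $\sup_{x} \limsup_{\beta \in J} f_\beta(x) < \infty$. By the observation, the subnet $(f_\beta)_{\beta \in J}$ converges to a constant function, and this function is then a cluster point of $(f_\alpha)_{\alpha \in I}$. The net characterisation of relative compactness yields (i).

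For (ii)$\Rightarrow$(iii), apply the same lemma to a sequence $(f_n)_{n \in \mathbb{N}}$ in $A$, viewed as a net indexed by $\mathbb{N}$. A cofinal directed subset $J \subseteq \mathbb{N}$ is an infinite set, so enumerating it increasingly gives a genuine subsequence. Its $\limsup$ agrees with the $\limsup$ along $J$, so it converges to a constant function in $C_a(K)$.

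The only delicate point is this last bookkeeping step: a cofinal subset of $\mathbb{N}$ must yield an honest subsequence with the same $\limsup$. Beyond that, all the real work (excluding unbounded growth at countably many points) is already contained in \zcref{lem:CaKRelativelyCountablyCompactSubnetBounded}.
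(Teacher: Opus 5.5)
Your proposal is correct and matches the paper's proof: (i)$\Rightarrow$(ii) and (iii)$\Rightarrow$(ii) are trivial, and (ii)$\Rightarrow$(i) and (ii)$\Rightarrow$(iii) both follow from \zcref{lem:CaKRelativelyCountablyCompactSubnetBounded}, since the resulting (sub)net converges to a constant function by \zcref{cor:CaKConvergence}. One small repair: $M$ can equal $-\infty$ (e.g.\ $f_n \equiv -n$), in which case $c_M$ is not in $C(K)$, so take the constant to be any real $C \ge M$, say $C = \max\{M, 0\}$, as the paper does with its $C \in \mathbb{R}$.
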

\begin{proof}
$(i) \implies (ii)$ and $(iii) \implies (ii)$ are obvious.

$(ii) \implies (i)$:
Let $(f_\alpha)_{\alpha \in I}$ be a net in $A$.
By \zcref{lem:CaKRelativelyCountablyCompactSubnetBounded}, there is some $C \in \mathbb{R}$ and a subnet $(g_\beta)_{\beta \in J}$ with
\begin{equation}
\sup_{x \in K} \limsup_\beta g_\beta \left( x \right) \le C \, .
\end{equation}
Hence, $(g_\beta)_{\beta \in J}$ converges to the constant function with value $C$ on $K$.

$(ii) \implies (iii)$:
Let $(f_n)_{n \in \mathbb{N}}$ be a sequence in $A$.
By \zcref{lem:CaKRelativelyCountablyCompactSubnetBounded}, there is some $C \in \mathbb{R}$ and a subsequence $(g_n)_{n \in \mathbb{N}}$ with
\begin{equation}
\sup_{x \in K} \limsup_{n \to \infty} g_n \left( x \right) \le C \, .
\end{equation}
Hence, $(g_n)_{n \in \mathbb{N}}$ converges to the constant function with value $C$ on $K$.
\end{proof}
Consequently, $C_a(K)$ satisfies \zcref{def:angelicity:RelativeCompactnessEquivalence} of \zcref{def:angelicity}.
One may ask whether $C_a(K)$ is angelic as well, that is whether for every relatively countably compact subset $A \subseteq C_a(K)$ and every $\bar{a} \in \mathrm{cl}\, A$, there is a sequence in $A$ that converges to $\bar{a}$.
Unfortunately, in general, the answer turns out to be negative.
\begin{theorem}
\label{thm:Ca01IsNotAngelic}
$C_a([0,1])$ is not angelic.
\end{theorem}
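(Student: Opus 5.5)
We will exhibit a relatively countably compact set $A\subseteq C_a([0,1])$ and a point $\bar a\in\mathrm{cl}\,A$ that is not the limit of any sequence in $A$. Our candidate is
\begin{equation}
A = \left\{ h \in C([0,1]) : h \le 1 \text{ and } \int_0^1 h(x)\,\mathrm{d}x \ge \tfrac12 \right\},
\qquad
\bar a = 0 .
\end{equation}
The idea is that neighbourhoods in $C_a([0,1])$ only control finitely many point values from above. Sequential convergence, however, controls $\limsup$ at every point, and this can be turned into an integral bound by Fatou's lemma.

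\emph{Step 1: $A$ is relatively countably compact.} Every net in $A$ is bounded above by the constant function $1$. So, by \zcref{cor:CaKConvergence}, it converges to the constant function $1$, exactly as in the proof of \zcref{cor:RelativeCompactnessEquivalences}. Hence $A$ is relatively compact, and in particular relatively countably compact.

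\emph{Step 2: $0\in\mathrm{cl}\,A$.} A basic neighbourhood of $0$ has the form $\{g : g(x_i) < \epsilon,\ i=1,\dots,n\}$ for finitely many points $x_1,\dots,x_n\in[0,1]$ and some $\epsilon>0$. We choose a continuous $h\le 1$ with $h(x_i)=0$ for all $i$ and $h=1$ outside small intervals around the $x_i$. For instance, take $h(x)=\min\{1, \delta^{-1}\min_i|x-x_i|\}$ with $\delta$ small enough. Its integral is at least $1-2n\delta\ge\tfrac12$. Thus $h\in A$ lies in the given neighbourhood.

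\emph{Step 3: no sequence in $A$ converges to $0$.} Suppose $(h_n)$ is a sequence in $A$ with $h_n\to 0$ in $C_a([0,1])$. By \zcref{cor:CaKConvergence}, $\limsup_n h_n(x)\le 0$ for every $x\in[0,1]$. Since $h_n\le 1$ and the constant $1$ is integrable on $[0,1]$, the reverse Fatou lemma applies and gives
\begin{equation}
\tfrac12 \le \limsup_{n\to\infty}\int_0^1 h_n\,\mathrm{d}x \le \int_0^1 \limsup_{n\to\infty} h_n\,\mathrm{d}x \le 0,
\end{equation}
which is a contradiction. Hence the second condition of \zcref{def:angelicity} fails, and $C_a([0,1])$ is not angelic.

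The only delicate point is Step 3. Fatou's lemma must be applied in the reverse direction, using the uniform upper bound $h_n\le1$, because the $h_n$ may be unbounded below. This same upper bound is what makes Step 1 work, so the definition of $A$ is chosen to serve both steps.
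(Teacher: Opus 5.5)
Your proof is correct, but it reaches the conclusion by a different route from the paper.

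The paper does not construct the witness itself. It imports from Arkhangel'skii a \emph{countable} set $Z\subseteq C_p([0,1])$ of functions with values in $[0,1]$ such that $0\in\mathrm{cl}\,(Z\setminus\{0\})$, while no sequence in $Z\setminus\{0\}$ converges pointwise to $0$. Nonnegativity then makes $C_a$-convergence to $0$ coincide with $C_p$-convergence to $0$ on $Z$. Relative compactness comes from the upper bound $1$, exactly as in your Step 1.

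You instead build the witness explicitly and prove the sequential obstruction yourself. Neighbourhoods of $0$ constrain only finitely many point values. A convergent sequence, by contrast, satisfies $\limsup_n h_n\le 0$ everywhere, and the reverse Fatou lemma (with integrable majorant $1$) turns this into an integral bound incompatible with $\int h_n\ge\frac12$. Because you allow functions unbounded below, you cannot reduce to pointwise convergence, and the $\limsup$ form of Fatou is exactly what is needed. Had you added $h\ge 0$ to the definition of $A$ (your Step 2 witnesses are nonnegative anyway), dominated convergence would suffice. Then $A$ would also be a classical witness that $C_p([0,1])$ is not Fr\'echet--Urysohn, which is essentially what the cited lemma supplies.

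Your route is self-contained and makes the mechanism of failure transparent. The paper's route buys a countable set, which it reuses later: in the compact non-$T_1$ set $Z\cup\{1\}$, and in the enumeration of $Z\setminus\{0\}$ for the compact $T_1$ non-sequential example. You can get countability too. Step 3 applies to every subset of $A$, so it suffices to take the countable subfamily of your witnesses with rational centres $r_i$ and $\delta=1/(4n)$. This subfamily still has $0$ in its closure: choosing $|x_i-r_i|<\epsilon\delta$ gives $h(x_i)<\epsilon$.
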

\begin{proof}
Following \cite[Lemma II.3.5]{src:Arkhangelskii1:TopologicalFunctionSpaces}, there is a countable set $Z \subseteq C_p([0,1])$ such that for each $f \in Z$, $f([0,1]) \subseteq [0,1]$ and $0 \in \mathrm{cl}\, (Z \setminus \{ 0 \})$ but for which no sequence in $Z \setminus \{ 0 \}$ converges to $0$.
Since the topology of $C_a([0,1])$ is coarser than that of $C_p([0,1])$, $0$ is also in the $C_a(K)$-closure of $Z$.
Moreover, since all functions in $Z$ are nonnegative, a net in $Z$ $C_a([0,1])$-converges to $0$ precisely if it $C_p([0,1])$-converges to $0$.
At the same time, $Z$ is trivially seen to be relatively compact in $C_a(K)$, because every net in $Z$ $C_a(K)$-converges to the constant function with value one.
\end{proof}
However, the countable tightness survives the passage from $C_p(K)$ to $C_a(K)$.
\begin{corollary}
The space $C_a(K)$ has countable tightness.
\end{corollary}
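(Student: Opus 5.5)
The plan is to apply \zcref{prop:DualIsSigmaCompactImpliesCountableTightness} to the asymmetric dual pair $(C(K), L)$, since $C_a(K)$ carries by definition the topology $\sigma(C(K), L)$. It therefore suffices to write $L = \mathrm{cone}\,\widehat{K}$ as a countable union of $\sigma(L, C(K))$-compact sets. The natural choice is
\begin{equation}
M_n = \left\{ \sum_{i=1}^{n} t_i \delta_{x_i} : t_1, \dots, t_n \in [0, n],\ x_1, \dots, x_n \in K \right\} \, , \qquad n \in \mathbb{N} \, .
\end{equation}
Every element of $L$ is a finite nonnegative combination of point evaluations, so it lies in some $M_n$. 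Repeated points and zero coefficients let us pad shorter sums, so the $M_n$ increase and $L = \bigcup_{n} M_n$.

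For compactness, I would realise $M_n$ as the image of the compact space $[0,n]^n \times K^n$ under
\begin{equation}
\Psi_n : (t_1, \dots, t_n, x_1, \dots, x_n) \mapsto \sum_{i=1}^n t_i \delta_{x_i} \, .
\end{equation}
By \zcref{prop:WeakStarTop}, $\sigma(L, C(K))$ is the topology of pointwise convergence on $C(K)$. So it is enough to check that, for each fixed $f \in C(K)$, the map
\begin{equation}
(t, x) \mapsto \sum_{i} t_i f(x_i)
\end{equation}
is continuous. This holds because $f$ is continuous on $K$ and multiplication and addition are continuous on $\mathbb{R}$. This is where \zcref{cor:KToKHatHomeomorphism} is implicitly used: $x \mapsto \delta_x$ is continuous into $\widehat{K}$. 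Consequently $\Psi_n$ is continuous, and $M_n = \Psi_n([0,n]^n \times K^n)$ is compact by Tychonoff's theorem.

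With $L = \bigcup_n M_n$ and each $M_n$ compact, \zcref{prop:DualIsSigmaCompactImpliesCountableTightness} yields that $\sigma(C(K), L)$, that is $C_a(K)$, has countable tightness. I do not expect a real obstacle here. The only point needing care is that compactness in $L$ is taken with respect to $\sigma(L, C(K))$ and not some larger ambient topology. This is settled by \zcref{prop:WeakStarTop}, which identifies $\sigma(L, C(K))$ with pointwise convergence (equivalently, the trace of $\sigma(\widetilde{L}, C(K))$).
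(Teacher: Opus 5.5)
Your proposal is correct and follows the paper's proof. The paper also writes $L$ as a countable union of $\sigma(L, C(K))$-compact sets and then applies the proposition that a $\sigma$-compact dual gives countable tightness; its sets $L_{m,n}$ bound the number of terms by $m$ and the total mass $\sum_i t_i$ by $n$, where you bound each coefficient instead. Your continuous-image argument via $\Psi_n$ on $[0,n]^n \times K^n$ supplies the compactness that the paper only asserts. The appeal to the homeomorphism $K \to \widehat{K}$ is not actually needed, since continuity of each $f$ already suffices.
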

\begin{proof}
For each $m, n \in \mathbb{N}$, the set
\begin{equation}
L_{m,n} = \left\{ \sum_{i = 1}^m t_i \delta_{x_i} \middle| t_1, \dots, t_m \in [0, \infty), x_1, \dots x_m \in K : \sum_{i = 1}^m t_i \le n \right\}
\end{equation}
is $\sigma(L,C_a(K))$-compact.
Since $L = \cup_{m,n \in \mathbb{N}} L_{m,n}$, the claim follows from \zcref{prop:DualIsSigmaCompactImpliesCountableTightness}.
\end{proof}
The failure of $C_a([0,1])$ to be angelic lies in the lack of the Fréchet-Urysohn property as was demonstrated in \zcref{thm:Ca01IsNotAngelic}.
In angelic spaces, that property implies the equivalence of the classes of compact, countably compact and sequentially compact subsets, a feature also lost in the asymmetric setting.
\begin{theorem}
Letting $\omega_1$ denote the first uncountable ordinal, there is a countably compact $A \subseteq C_a([0, \omega_1])$ that is not compact.
\end{theorem}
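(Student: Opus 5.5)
I would take the functions $f_\alpha = \chi_{[0,\alpha]}$ for $\alpha < \omega_1$ and set $A = \{ f_\alpha : \alpha < \omega_1 \}$. For every $\alpha < \omega_1$ the interval $[0,\alpha] = [0, \alpha + 1)$ is clopen in $[0,\omega_1]$ with the order topology. Hence each $f_\alpha$ is continuous, and $A \subseteq C_a([0,\omega_1])$. The two properties are then checked directly using \zcref{cor:CaKConvergence}: a net $(g_i)$ converges to $g$ if and only if $\limsup_i g_i(x) \le g(x)$ for every $x$.

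\emph{Countable compactness.} Let $(f_{\alpha_n})_{n \in \mathbb{N}}$ be a sequence in $A$. Since $\omega_1$ has uncountable cofinality, $\beta = \sup_n \alpha_n < \omega_1$, so $f_\beta \in A$. For every $n$ we have $f_{\alpha_n} \le f_\beta$ pointwise, hence $\limsup_n f_{\alpha_n}(x) \le f_\beta(x)$ for all $x$. By \zcref{cor:CaKConvergence} the whole sequence converges to $f_\beta \in A$. So every sequence in $A$ has a cluster point in $A$; in fact $A$ is even sequentially compact. This step is routine.

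\emph{Failure of compactness.} For $\gamma < \omega_1$ set
\begin{equation}
U_\gamma = \left\{ f \in C_a([0,\omega_1]) : f(\gamma + 1) < \tfrac{1}{2} \right\} ,
\end{equation}
which is $\sigma(C(K), L)$-open, since $f \mapsto \langle f, \delta_{\gamma+1} \rangle$ is upper semicontinuous. We have $f_\alpha \in U_\gamma$ if and only if $\alpha \le \gamma$, because $f_\alpha(\gamma+1) = 0$ exactly when $\gamma + 1 > \alpha$. Thus $\{U_\gamma : \gamma < \omega_1\}$ covers $A$. Any finite subfamily $U_{\gamma_1}, \dots, U_{\gamma_k}$ only contains the $f_\alpha$ with $\alpha \le \max_i \gamma_i$, so it misses $f_{\max_i \gamma_i + 1}$. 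Hence $A$ is not compact.

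The main point requiring care is the continuity of the $f_\alpha$: one must use $\alpha < \omega_1$, so that $\alpha + 1 \le \omega_1$ and $[0,\alpha]$ is open. One must also use the uncountable cofinality of $\omega_1$, so that the supremum in the countable compactness argument stays inside $A$. The construction also matches \zcref{thm:LindelöfCompactnessCondition}. The $\sigma_s$-closure of $A$ contains $\chi_{[0,\omega_1)}$, the pointwise limit of the $f_\alpha$. This function is not continuous, since $\{\omega_1\}$ is not open, so it does not lie in $C([0,\omega_1])$. In $C_a([0,\omega_1])$ the would-be limit escapes, and this explains why no contradiction with the Lindelöf results of \zcref{sec:WeakCompactnessAndCountability} arises.
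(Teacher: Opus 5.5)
Your proof is correct and is essentially the paper's: you use the same family $f_\alpha=\chi_{[0,\alpha]}$ and the same countable-supremum argument for countable compactness, and your cover $\{U_\gamma\}_{\gamma<\omega_1}$ just makes explicit the paper's one-line claim that the net $(f_\alpha)_{\alpha<\omega_1}$ has no cluster point in $A$. Your closing remark (not needed for the proof) is wrong, though: $\chi_{[0,\omega_1)}\notin C([0,\omega_1])$, and in fact $A$ is an uncountable closed discrete subset of $C_p([0,\omega_1])$ (the only other pointwise limits, $\chi_{[0,\lambda)}$ with $\lambda\le\omega_1$ a limit ordinal, are discontinuous), so $\mathrm{cl}_{\sigma_s}A=A\subseteq{\downarrow A}$ and the results of \zcref{sec:WeakCompactnessAndCountability} are avoided only because their hypothesis fails, i.e.\ $C_p([0,\omega_1])$ is not Lindel\"of --- which makes your $A$ a counterexample to \zcref{prop:CountablyCompactIsCompactCharacterisation}, whose proof relies on $C_p(K)$ being Lindel\"of for every compact $K$.
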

\begin{proof}
Set $A = \{ f_\alpha : \alpha < \omega_1 \}$, where for each ordinal $\alpha < \omega_1$,
\begin{equation}
f_\alpha : [0, \omega_1] \to \mathbb{R},
\beta \mapsto \begin{cases}
1 & \text{if } \beta \le \alpha \\
0 & \text{otherwise.}
\end{cases}
\end{equation}
Since the assignment $\alpha \mapsto f_\alpha$ is order-preserving and every countable set of ordinals has an upper bound less than $\omega_1$, it is obvious that every sequence in $A$ has a cluster point in $A$.
However, the net $(f_\alpha)_{\alpha < \omega_1}$ has no cluster point in $A$.
\end{proof}
The underlying reason why this equivalence fails, is that a countably compact subset of $C_a(K)$ is not necessarily $C_p(K)$-closed.
\begin{proposition}
\label{prop:CountablyCompactIsCompactCharacterisation}
Let $A \subseteq C_a(K)$ be countably compact and set $B = \mathrm{cl}_{C_p(K)} A$ with the subspace topology inherited from $C_a(K)$.
Then,
\begin{enumerate}
\item $A$ is precompact,
\item $A$ is relatively compact in $B$,
\item $A$ is compact if and only if $B \subseteq \downarrow A$,
\item $B$ is compact.
\end{enumerate}
\end{proposition}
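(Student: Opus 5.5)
This proposition is a specialisation of the general results of \zcref{sec:WeakCompactnessAndCountability} to the dual pair $(C(K), L)$. The plan is to check the hypotheses once and then read off each item.

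The key observation is that $\sigma_s(C(K), L)$ coincides with the topology of $C_p(K)$. We have already noted that $C_p(K)$ is Lindelöf, being K-analytic. Hence $(C(K), \sigma_s(C(K), L))$ is Lindelöf. Moreover, $\mathrm{cl}_{C_p(K)} A = \mathrm{cl}_{\sigma_s(C(K),L)} A$, so the set $B$ in the statement is precisely the set considered in \zcref{lem:CountableCompactnessImpliesRelativeCompactnessInSymmetricClosure} and \zcref{thm:LindelöfAndSymmetricClosureImpliesCompactness}.

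With this in place, the items follow directly from earlier results.
\begin{enumerate}
\item Precompactness of $A$ is the first assertion of \zcref{lem:CountableCompactnessImpliesRelativeCompactnessInSymmetricClosure}.
\item Relative compactness of $A$ in $B$, with the subspace topology from $C_a(K)$, is the second assertion of the same lemma.
\item The characterisation of compactness is \zcref{thm:LindelöfCompactnessCondition}. By \zcref{cor:SpecialisationOrderInCaKCoincidesWithFunctionOrder}, the downward closure $\downarrow A$ may be read with respect to the pointwise order of functions.
\item Compactness of $B$ is \zcref{thm:LindelöfAndSymmetricClosureImpliesCompactness}.
\end{enumerate}

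The main point to verify is that the general results were stated for $X$ carrying $\sigma(X,Y)$ and $Y$ its weak-$\ast$ topology. For $C_a(K)$ this holds by definition, since $C_a(K)$ carries $\sigma(C(K), L)$ and $C(K)$ and $L$ are in duality.

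A subtler point in (iv) concerns the topology on $B$. Compactness in \zcref{thm:LindelöfAndSymmetricClosureImpliesCompactness} is meant in $\sigma(X,Y)$, that is, in the subspace topology inherited from $C_a(K)$, not in $C_p(K)$. This matches the statement here, because $B$ is given the subspace topology from $C_a(K)$.

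If any step needed further justification, it would be the identification of $\sigma_s$ with the pointwise topology. That identification was recorded as a corollary, since the subbasic sets $\{f : f(x) < c\}$ and $\{f : f(x) > c\}$ together generate the pointwise topology.
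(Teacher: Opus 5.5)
\textbf{There is a genuine gap.} Your reduction is exactly the paper's own proof, which just cites the lemma on relative compactness in the symmetric closure and the two Lindelöf theorems. But the step carrying all the weight fails. You assert that $(C(K),\sigma_s(C(K),L))=C_p(K)$ is Lindelöf because it is K-analytic. That holds for special $K$, for instance metrisable $K$, or more generally Talagrand compacta (precisely those $K$ with $C_p(K)$ K-analytic). It does not hold for an arbitrary compact Hausdorff $K$, which is what the statement allows. Without it, none of the three cited results applies. By contrast, the step you singled out as delicate, identifying $\sigma_s$ with the pointwise topology, is harmless.

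In fact items (ii)--(iv) are false in general, and the paper's own example shows it. Let $K=[0,\omega_1]$ and $A=\{f_\alpha:\alpha<\omega_1\}$ with $f_\alpha=\mathbf{1}_{[0,\alpha]}$; this set is countably compact but not compact in $C_a(K)$. Every pointwise limit of the $f_\alpha$ is $\{0,1\}$-valued, non-increasing, equal to $1$ at $0$ and to $0$ at $\omega_1$. Hence it is $\mathbf{1}_{[0,\gamma)}$ for some $1\le\gamma\le\omega_1$, and this is continuous only when $\gamma$ is a successor, i.e.\ only when it is one of the $f_\alpha$. So $B=\mathrm{cl}_{C_p(K)}A=A$. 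Consequently $B\subseteq{\downarrow}A$ holds although $A$ is not compact, $A$ is not relatively compact in $B$, and $B$ is not compact. The same example shows directly that $C_p([0,\omega_1])$ is not Lindelöf: otherwise the $\sigma_s$-closed set $A$ would be Lindelöf, hence compact, in the coarser topology of $C_a(K)$. Your argument therefore proves the proposition only under an added hypothesis such as ``$C_p(K)$ is Lindelöf''.

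Item (i) does survive in general, but it needs a different argument. For $y_1,\dots,y_k\in L$, the map $g\mapsto(\langle g,y_j\rangle)_{j}$ is continuous from $C_a(K)$ into the second countable space $(\mathbb{R},u)^k$. It therefore maps $A$ onto a countably compact Lindelöf set, which is compact. Covering that image by the open sets $\{z: z_j<\langle a,y_j\rangle+\epsilon\ \forall j\}$, $a\in A$, gives the finitely many centres that precompactness requires.
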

\begin{proof}
This follows immediately from \zcref{lem:CountableCompactnessImpliesRelativeCompactnessInSymmetricClosure,thm:LindelöfCompactnessCondition,thm:LindelöfAndSymmetricClosureImpliesCompactness}.
\end{proof}
\zcref[S]{cor:RegularCountablyCompactImpliesSequentiallyCompact} shows that regularity and countable compactness implies sequential compactness.
However, as the following examples show, there are compact subsets of $C_a([0,1])$ that fail to be sequential.
\begin{example}
The set $Z$ from \zcref{thm:Ca01IsNotAngelic} together with the constant function with value one is compact and $T_0$ but not $T_1$.
It is however, not sequential, because no sequence in $(Z \cup \{ 1 \}) \setminus \{ 0 \}$ converges to $0$.
\end{example}
\begin{example}
\label{ex:CompactT1NonSequential}
To construct a compact $T_1$ set that is not sequential, let $(f_n)_{n \in \mathbb{N}}$ be an enumeration of $Z \setminus \{ 0 \}$.
Pick a continuous function $h : [0, 3] \to \mathbb{R}$ with $h([0,1]) = \{ 1 \}$, $h([1, 2]) = [0, 1]$ and $h((2, 3]) \subseteq (-\infty, 0)$.
Now, for every $n \in \mathbb{N}$ pick a continuous extension $g_n : [0, 3] \to \mathbb{R}$ of $f_n$ such that
\begin{itemize}
\item $g_n([1, 1 + 4^{-n}]) \subseteq [0, 1]$,
\item $g_n((1 + 4^{-n}, 1 + 2 \cdot 4^{-n})) \subseteq (1, \infty)$,
\item $g_n([1 + 2 \cdot 4^{-n}, 1 + 3 \cdot 4^{-n}]) \subseteq [0, 1]$,
\item $g_n((1 + 3 \cdot 4^{-n}, 1 + 4 \cdot 4^{-n})) \subseteq (-\infty, 0)$,
\item $g_n([1 + 4 \cdot 4^{-n}, 2]) = \{ 0 \}$
\item $g_n = h$ on $[2, 3]$
\end{itemize}
Then the set $D = \{ g_n : n \in \mathbb{N} \} \cup \{ h, 0 \} \subseteq C_a([0, 3])$ is countably compact, since $g_n$ converges to $h$ and thus compact, because it is countable.
Note, that $0 \in \mathrm{cl}\, \{ g_n : n \in \mathbb{N} \}$ because $g_n$ converges pointwise to zero on $(1, 2]$ and $g_n \le 0$ on $[2, 3]$.
By construction, $D$ is $T_1$ but not sequential, since no sequence in $D \setminus \{ 0 \}$ converges to $0$.
\end{example}
While perhaps not obvious on first sight, \zcref{ex:CompactT1NonSequential} is not Hausdorff.
This can be seen directly, by considering points in $(1, 2]$ that are close to $1$.
In fact, the existence of a compact Hausdorff space of countable tightness that is not sequential is independent of ZFC.
This amounts to the Moore-Mrówka problem (see e.g. \cite{src:Balogh:CompactHausdorffCountableTightness} and \cite[Classic Problem VI]{src:TopologyProceedings2:Problems}).
However, compact Hausdorff subspaces are regular and thus Fréchet-Urysohn by \zcref{cor:RegularCountablyCompactImpliesSequentiallyCompact}.

For an arbitrary set $Q$, let $\underline{C}(Q)$ denote the set of real-valued lower semicontinuous functions on $Q$ equipped with the subspace topology inherited from $( \mathbb{R}, u )^Q$.
Here, $u : \mathbb{R} \to \mathbb{R}$ is the asymmetric seminorm with $x \mapsto \max\{ x, 0 \}$ such that a net $(x_\alpha)_{\alpha \in I}$ in $(\mathbb{R}, u)$ converges to $x \in (\mathbb{R}, u)$ if and only if $\limsup_\alpha x_\alpha \le x$.
Consequently, a net $(f_\alpha)_{\alpha \in I}$ in $\underline{C}(Q)$ converges to some $f \in \underline{C}(Q)$ if and only if $\limsup_\alpha f_\alpha(q) \le f(q)$ for all $q \in Q$.
\begin{lemma}
\label{lem:AsymmetricHausdorffQuotient}
Let $B \subseteq C_a(K)$ be Hausdorff, $T \subseteq B$ and $\pi_2 : \widetilde{L} \to \widetilde{L} / \mathrm{ann}\, T$ the canonical quotient map where $\widetilde{L}$ is considered with the $\sigma(\widetilde{L}, C_a(K))$-topology.
Set $Q = \pi_2(\widehat{K})$ and define the map
\begin{equation}
\pi_1 : \mathrm{cl}_B T \to \underline{C}(Q), \text{ with } \left( \pi_1 f \right) \left( q \right)
=
\inf \left\{ f \left( k \right) \middle| k \in K : \delta_k \in \pi_2^{-1} \left( \left\{ q \right\} \right) \right\} \ ,
\end{equation}
for all $f \in \mathrm{cl}_B T$ and $q \in Q$.
Then,
\begin{enumerate}
\item \label{itm:ProjectionOnCoreEquality} For all $f \in T$ and all $k \in K$, $( \pi_1 f ) ( \pi_2 \delta_k ) = f( k )$,
\item for all $f \in T$, $\pi_1 f$ is a continuous function from $Q$ to $\mathbb{R}$,
\item $\pi_1$ is continuous and injective with a Hausdorff image,
\item \label{itm:CoreContinuity} A net $(f_\alpha)_{\alpha \in I}$ in $T$ converges to $f \in \mathrm{cl}_B T$ if and only if $(\pi_1 f_\alpha)_{\alpha \in I}$ converges to $\pi_1 f$,
\item If $B$ is regular, $\pi_1$ is a homeomorphism onto its image.
\end{enumerate}
\end{lemma}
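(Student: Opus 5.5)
The plan is to prove the items roughly in order, working throughout with the elementary description of $Q$. The map $k \mapsto \pi_2 \delta_k$ is continuous from $K$ onto $Q$ by \zcref{cor:KToKHatHomeomorphism}, and $\mathrm{ann}\, T$ is $\sigma(\widetilde{L}, C_a(K))$-closed, so the quotient is a Hausdorff topological vector space. Hence $Q$ is compact Hausdorff and each fibre $K_q = \{ k \in K : \pi_2 \delta_k = q \}$ is closed in $K$. Consequently the infimum defining $\pi_1 f$ is attained for every continuous $f$.

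\textbf{Items (i) and (ii).} If $k, k' \in K_q$, then $\delta_k - \delta_{k'} \in \mathrm{ann}\, T$, so $f(k) = f(k')$ for every $f \in T$; this gives (i). For (ii), the functional $\langle f, \cdot \rangle$ is $\sigma(\widetilde{L}, C_a(K))$-continuous and vanishes on $\mathrm{ann}\, T$. It therefore factors through a continuous linear functional on the quotient, and by (i) its restriction to $Q$ is $\pi_1 f$.

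\textbf{Well-definedness of $\pi_1$.} For an arbitrary $f \in C(K)$ I would check $\pi_1 f \in \underline{C}(Q)$ directly. Since fibre minima are attained,
\[
\{ q \in Q : \pi_1 f(q) \le c \} = \pi_2\bigl(\{ \delta_k : f(k) \le c \}\bigr).
\]
The right-hand side is a continuous image of a compact set in the Hausdorff space $Q$, hence closed. So $\pi_1 f$ is lower semicontinuous.

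\textbf{Continuity of $\pi_1$.} Let $f_\alpha \to f$ in $\mathrm{cl}_B T$ and fix $q \in Q$. Choose $k \in K_q$ with $f(k) = \pi_1 f(q)$. Then $\pi_1 f_\alpha(q) \le f_\alpha(k)$, so $\limsup_\alpha \pi_1 f_\alpha(q) \le f(k) = \pi_1 f(q)$, which is continuity by \zcref{cor:CaKConvergence}.

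\textbf{Injectivity.} This is where the Hausdorff hypothesis enters. Since $B$ is $T_1$, \zcref{prop:WeakTop} and \zcref{cor:SpecialisationOrderInCaKCoincidesWithFunctionOrder} show that distinct elements of $B$ are pointwise incomparable. Let $f \in \mathrm{cl}_B T$ and let $(f_\alpha)$ be a net in $T$ converging to $f$. Since each $f_\alpha$ is constant on fibres,
\[
\limsup_\alpha f_\alpha(k) = \limsup_\alpha f_\alpha(k') \le f(k') \quad \text{for all } k' \in K_{\pi_2 \delta_k},
\]
so $\limsup_\alpha f_\alpha(k) \le (\pi_1 f)(\pi_2 \delta_k)$. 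Hence the net converges to every $g \in B$ with $g \ge (\pi_1 f) \circ \pi_2 \circ \delta$. If $\pi_1 f = \pi_1 g$ with $g \in \mathrm{cl}_B T$, then both $f$ and $g$ dominate this common function, the net converges to both, and Hausdorffness of $B$ forces $f = g$.

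\textbf{Item (iv).} The same computation gives (iv). One direction is continuity of $\pi_1$. Conversely, if $\pi_1 f_\alpha \to \pi_1 f$ with $f_\alpha \in T$, then by (i)
\[
\limsup_\alpha f_\alpha(k) = \limsup_\alpha \pi_1 f_\alpha(\pi_2 \delta_k) \le \pi_1 f(\pi_2 \delta_k) \le f(k).
\]

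\textbf{Hausdorff image.} I expect this to be the main obstacle, because for points of $\mathrm{cl}_B T \setminus T$ we lack (i). I would argue by contradiction. Suppose a net $\pi_1 h_\gamma$ converges to both $\pi_1 f$ and $\pi_1 g$ with $f \ne g$. Approximate each $h_\gamma$ by nets from $T$, which is possible by continuity of $\pi_1$ and by (iv). Then run a diagonal (product-directed-set) argument to produce a single net in $T$ whose $\pi_1$-images converge to both $\pi_1 f$ and $\pi_1 g$. By (iv) this net converges in $B$ to both $f$ and $g$, contradicting Hausdorffness of $B$. The delicate point is the diagonalisation: upper-limit convergence is not obviously preserved when passing to approximating nets, and I would try to control it via fibre minima as in the continuity step.

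\textbf{Item (v).} The same diagonal technique should work, now using closed neighbourhoods. Let $\pi_1 f_\alpha \to \pi_1 f$ with $f_\alpha \in \mathrm{cl}_B T$, and let $V$ be a closed neighbourhood of $f$ in $B$. If infinitely many $f_\alpha$ lay outside $V$, then, since the complement of $V$ is open, we could approximate them by elements of $T$ outside $V$. Diagonalising and applying (iv) would give a net in $T$ outside $V$ converging to $f$, a contradiction.
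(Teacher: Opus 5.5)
Your proposal is correct. Its skeleton matches the paper's: constancy on fibres, attained fibre minima for lower semicontinuity, the $\limsup$ estimate for continuity, and the key trick of producing a net in $T$ that converges in $B$ to two points. Several steps are carried out differently, though.

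For (ii) you factor $\langle f,\cdot\rangle$ through the Hausdorff quotient $\widetilde{L}/\mathrm{ann}\,T$. The paper instead uses that $\psi\colon k\mapsto\pi_2\delta_k$ is a closed surjection and that $f^{-1}((-\infty,r))$ is $\psi$-saturated. Your version is slightly slicker. Your injectivity argument is direct: a net in $T$ converging to $f$ automatically converges to every $g\ge(\pi_1 f)\circ\pi_2\circ\delta$. This is cleaner than the paper, which obtains injectivity only as the special case $a=b$ of its non-separation argument.

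For the Hausdorff image, the worry you flag is unfounded. The iterated-limit (diagonal) theorem holds in every topological space: an open neighbourhood of $\pi_1 f$ that contains $\pi_1 h_\gamma$ is also a neighbourhood of $\pi_1 h_\gamma$. Since $\underline{C}(Q)$ carries a genuine product topology, no control via fibre minima is needed. The paper avoids diagonalisation altogether. Continuity of $\pi_1$ makes $\pi_1(T)$ dense in $\pi_1(\mathrm{cl}_B T)$. So if $\pi_1 f$ and $\pi_1 g$ have no disjoint neighbourhoods in the image, every $U\cap V\cap\pi_1(\mathrm{cl}_B T)$ meets $\pi_1(T)$. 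This directly gives a net in $T$, indexed by pairs $(U,V)$, whose images converge to both, and then (iv) and Hausdorffness of $B$ finish the argument.

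For (v) the paper simply cites Bourbaki's extension theorem for maps from a dense subspace into a regular space, with (iv) supplying the required limits along $\pi_1(T)$. Your closed-neighbourhood argument is in effect a hands-on proof of that theorem in this case. Its ``diagonalisation'' is again just the inclusion $\mathrm{cl}\,\pi_1(\mathrm{cl}_B(T\setminus V))\subseteq\mathrm{cl}\,\pi_1(T\setminus V)$, which follows from continuity. One wording fix: for nets you should say that $f_\alpha$ lies outside $V$ ``frequently'' rather than for ``infinitely many'' indices, and then pass to a subnet.
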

\begin{proof}
First, note that $\mathrm{ann}\, T$ is $\sigma(\widetilde{L}, C_a(K))$-closed.
Hence, $\widetilde{L} / \mathrm{ann}\, T$ is Hausdorff and thus $Q$ is a compact, Hausdorff space.
By \zcref{cor:KToKHatHomeomorphism,prop:WeakStarTop}, the map
\begin{equation}
\psi : K \to Q, k \mapsto \pi_2 \left( \delta_k \right)
\end{equation}
is continuous and thus also closed and proper.
Using $\psi$ instead of $\pi_2$, one obtains
\begin{equation}
\label{eq:pi1fDefinitionViaPsi}
\left( \pi_1 f \right) \left( q \right)
=
\inf \left\{ f \left( k \right) \middle| k \in \psi^{-1} \left( \left\{ q \right\} \right) \right\} \ ,
\end{equation}
for all $f \in \mathrm{cl}_B T$ and all $q \in Q$.
Now, let $r \in \mathbb{R}$ and set $C = (\pi_1 f)^{-1}( (- \infty, r] )$ as well as $D = f^{-1}( (-\infty, r] )$.
Since $\psi$ is proper and $f$ is continuous, the infimum in \zcref{eq:pi1fDefinitionViaPsi} is always attained.
Hence, $q \in C$ implies that there is some $k \in K$ with $\psi(k) = q$ and $f(k) \le r$, i.e. $C \subseteq \psi(D)$.
The converse inequality is obvious, such that $C = \psi(D)$.
Since $D$ is closed, so is $C$ such that $\pi_1 f$ is lower semicontinuous and $\pi_1$ is well-defined.

$(i)$:
If $f \in T$ and $k, k' \in K$ with $\psi(k) = \psi(k')$, we have
\begin{equation}
\label{eq:CoreEqualityOnEquivalentElements}
f \left( k \right)
=
\left< f, \delta_k \right>
=
\left< f, \delta_{k'} \right>
=
f \left( k' \right) \, ,
\end{equation}
by definition such that \zcref{itm:ProjectionOnCoreEquality} follows.

$(ii)$:
Fix some $f \in T$, $r \in \mathbb{R}$ and let $U = (\pi_1 f)^{-1}( (-\infty, r) )$ as well as $V = f^{-1}( (-\infty, r ) )$.
Obviously, $\psi(V) = U$ such that $U$ is open if $\psi^{-1}( \psi(V) )$ is open.
However, by \zcref{eq:CoreEqualityOnEquivalentElements}, $V = \psi^{-1}( \psi(V) )$.
Since $f$ is continuous, $V$ is open and thus $U$ is open.
Consequently, $\pi_1 f$ is upper semicontinuous and thus continuous.

$(iii)$.
Let $(f_\alpha)_{\alpha \in I}$ be a net in $B$ that converges to some $f \in B$.
Then,
\begin{equation}
\begin{aligned}
\limsup_{\alpha} \left( \pi_1 f_\alpha \right) \left( q \right)
&=
\limsup_{\alpha} \inf_{k \in \psi^{-1} ( \{ q \} )} f_\alpha \left( k \right) \\
&\le
\inf_{k \in \psi^{-1} ( \{ q \} )} \limsup_{\alpha} f_\alpha \left( k \right)
\le
\inf_{k \in \psi^{-1} ( \{ q \} )} f \left( k \right)
=
\left( \pi_1 f \right) \left( q \right) \, ,
\end{aligned}
\end{equation}
for all $q \in Q$.
Now, let $f, g \in \mathrm{cl}_B T$ and set $a = \pi_1 f$ as well as $b = \pi_1 g$.
Since $\pi_1$ is continuous, $U \cap \pi_1(T) \neq \emptyset$ for all open $U \subseteq \underline{C}(Q)$ with $U \cap \pi_1(\mathrm{cl}_B T) \neq \emptyset$.
In particular, for all open neighbourhoods $U$ of $a$ and $V$ of $b$ with $U \cap V \neq \emptyset$, $U \cap V$ is open and $U \cap V \cap \pi_1(T) \neq \emptyset$.
Hence, if $a$ and $b$ have no disjoint neighbourhoods, there is a net $(f_\alpha)_{\alpha \in I}$ in $T$ such that $(\pi_1 f_\alpha)_{\alpha \in I}$ converges to both $a$ and $b$.
However, then
\begin{equation}
f \left( k \right)
\ge
a \left( \psi(k) \right)
\ge
\limsup_\alpha \left( \pi_1 f_\alpha \right) \left( \psi(k) \right)
=
\limsup_\alpha f_\alpha \left( k \right) \, ,
\end{equation}
for all $k \in K$.
Hence, $(f_\alpha)_{\alpha \in I}$ converges to $f$ and similarly, to $g$ as well.
Since $\mathrm{cl}_B T$ is Hausdorff, it follows that $f = g$.
Consequently, $\pi_1$ is injective and has a Hausdorff image.

$(iv)$:
The forward implication is simply the continuity of $\pi_1$.
For the converse, let $f \in \mathrm{cl}_B T$ and $(f_\alpha)_{\alpha \in I}$ be a net in $T$ such that $(\pi_1 f_\alpha)_{\alpha \in I}$ converges to $\pi_1 f$.
Then, by applying \zcref{itm:ProjectionOnCoreEquality} for all $k \in K$,
\begin{equation}
\limsup_{\alpha} f_\alpha \left( k \right)
=
\limsup_{\alpha} \left( \pi_1 f_\alpha \right) \left( \psi \left( k \right) \right)
\le
\left( \pi_1 f \right) \left( \psi \left( k \right) \right)
\le
f \left( k \right) \, .
\end{equation}

$(v)$:
Consider the function $g = \pi_1^{-1} \upharpoonright \pi_1(T)$.
Applying \cite[p. 81, Theorem 1]{src:Bourbaki:GeneralTopologyI}, $g$ has a unique continuous extension to $\pi_1(\mathrm{cl}_B T)$ which by definition coincides with $\pi_1^{-1}$.
\end{proof}
\begin{theorem}
Let $B \subseteq C_a(K)$ be countably compact and regular.
Then $B$ is Fréchet-Urysohn.
\end{theorem}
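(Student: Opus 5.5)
The plan is to reduce to a countable set using countable tightness, and then to apply \zcref{lem:CountabilityAndPseudocharacter} inside a closed subspace. The hard part will be showing that $f$ has countable pseudocharacter there.

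First, $B$ is Hausdorff. Regularity in the sense used here, together with the $T_0$ property of $C_a(K)$ from \zcref{prop:WeakTop}, gives $T_3$ and hence Hausdorffness. Now let $T \subseteq B$ and $f \in \mathrm{cl}_B T$; we may assume $f \notin T$. Since $C_a(K)$ has countable tightness, there is a countable $S \subseteq T$ with $f \in \mathrm{cl}_B S$. Set $X = \mathrm{cl}_B S$. It is closed in $B$, so it is countably compact, regular and Hausdorff. Enumerate $S = \{ s_n : n \in \mathbb{N} \}$. Because $X$ is $T_1$ and $f \notin S$, every neighbourhood of $f$ contains infinitely many $s_n$, so $f$ is a cluster point of $(s_n)_{n \in \mathbb{N}}$. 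By \zcref{lem:CountabilityAndPseudocharacter} it therefore suffices to show that the pseudocharacter of $X$ at $f$ is countable.

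To obtain countable pseudocharacter, I would apply \zcref{lem:AsymmetricHausdorffQuotient} with the countable set $S$ in place of $T$. Since $B$ is regular, this gives a homeomorphism $\pi_1$ of $X$ onto its Hausdorff image in $\underline{C}(Q)$. Here $Q = \pi_2(\widehat{K})$ is compact Hausdorff. Its topology is the one induced by the countably many continuous functions $\pi_1 s_n$, which separate the points of $Q$ because we quotient by $\mathrm{ann}\, S$. Hence $Q$ is compact metrisable (\cite[Theorem 3.1.19]{src:Engelking:GeneralTopology}). The candidate countable family of open sets is
\begin{equation}
W_{j,m} = \left\{ g \in X : \left( \pi_1 g \right) \left( q_j \right) < \left( \pi_1 f \right) \left( q_j \right) + \tfrac{1}{m} \right\} ,
\end{equation}
where $(q_j)_{j \in \mathbb{N}}$ runs over a countable set in $Q$. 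These sets are open because evaluations are upper semicontinuous on $\underline{C}(Q)$. I would choose the $q_j$ from a countable base $\mathcal{B}$ of $Q$ in a way adapted to the lower semicontinuous function $\pi_1 f$. For instance, for each $O \in \mathcal{B}$ pick a point $q_O \in O$ at which $\pi_1 f$ comes within $1/m$ of $\inf_O \pi_1 f$.

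Then one has to show that $g \in \bigcap W_{j,m}$ forces $g \le f$. Since $X$ is Hausdorff, and hence $T_1$, and the specialization order is pointwise by \zcref{cor:SpecialisationOrderInCaKCoincidesWithFunctionOrder}, this gives $g = f$.

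This last step is the main obstacle. Both $\pi_1 g$ and $\pi_1 f$ are merely lower semicontinuous, so inequality on a countable set does not automatically propagate to all of $Q$. If the direct density argument fails, I would instead exploit Hausdorffness more strongly. For each $g \ne f$ take disjoint neighbourhoods, use the Lindelöf property of the metrisable $Q$ to reduce to basic neighbourhoods indexed by countable data, and so exhibit $\{ f \}$ as a countable intersection of open sets. Alternatively, I would try to verify the hypotheses of \zcref{lem:USCSeparation} on $X$ for the countable family $g \mapsto (\pi_1 g)(q_j)$. That would give $X$ a countable network, so $X$ would be compact metrisable and in particular of countable pseudocharacter.
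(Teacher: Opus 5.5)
\textbf{Verdict: genuine gap at the decisive step.} Your architecture is the paper's: countable tightness, reduction to a countable set, \zcref{lem:AsymmetricHausdorffQuotient}, metrisability of $Q$, countable pseudocharacter, then \zcref{lem:CountabilityAndPseudocharacter}. Your preliminary steps are correct, and in fact more careful than the paper's: Hausdorffness of $B$ from regularity plus $T_0$, and getting $f$ as a cluster point of an enumeration of $S$ via $T_1$. The gap is the step you flag yourself. You never prove that $\bigcap_{j,m} W_{j,m} = \{f\}$, and your fallbacks are unargued sketches. The route through \zcref{lem:USCSeparation}, for instance, would need one countable set of points detecting every pair of distinct elements of $X$, and it would make $X$ compact with a countable network, far more than is needed.

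The missing idea is to apply \zcref{lem:AsymmetricHausdorffQuotient} to $T = S \cup \{f\}$ rather than to $S$. This leaves $X = \mathrm{cl}_B T$ unchanged, and $Q$ is still compact metrisable, since its topology is induced by countably many continuous point-separating functions. But part (ii) of the lemma now makes $\pi_1 f$ \emph{continuous} on $Q$, so a dense sequence $(q_j)$ suffices. For $g \in \bigcap_{j,m} W_{j,m}$, the function $\pi_1 g - \pi_1 f$ is lower semicontinuous. Hence $\{ \pi_1 g > \pi_1 f \}$ is open, it misses the dense set $\{ q_j \}$, and so it is empty. This is what the paper does.

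Alternatively, your own candidate points close the gap without continuity of $\pi_1 f$, provided you index them by pairs. For $O \in \mathcal{B}$ and $m \in \mathbb{N}$, choose $q_{O,m} \in O$ with $(\pi_1 f)(q_{O,m}) < \inf_O \pi_1 f + 1/m$. This is possible because $\pi_1 f$ is lower semicontinuous on the compact $Q$, hence bounded below. Let $(q_j)$ enumerate all the $q_{O,m}$. Suppose $(\pi_1 g)(q) > (\pi_1 f)(q) + 2\epsilon$ for some $q \in Q$ and $\epsilon > 0$. Pick $O \in \mathcal{B}$ with $q \in O \subseteq \{ \pi_1 g > (\pi_1 f)(q) + \epsilon \}$, which is open because $\pi_1 g$ is lower semicontinuous, and pick $m$ with $1/m < \epsilon$. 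Then $(\pi_1 f)(q_{O,m}) < \inf_O \pi_1 f + 1/m \le (\pi_1 f)(q) + \epsilon < (\pi_1 g)(q_{O,m})$. This contradicts $g \in \bigcap_{j,m} W_{j,m}$.

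Either version yields $\pi_1 g \le \pi_1 f$ on $Q$. Since the pointwise order is the specialisation order of $\underline{C}(Q)$, this says $\pi_1 f \in \mathrm{cl}\, \{ \pi_1 g \}$. The image of $\pi_1$ is Hausdorff, so $\pi_1 g = \pi_1 f$, and injectivity gives $g = f$. Equivalently, the homeomorphism gives $f \in \mathrm{cl}\, \{ g \}$, i.e.\ $g \le f$.
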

\begin{proof}
Let $A \subseteq B$ and $\bar{a} \in \mathrm{cl}_B A$.
Since $C_a(K)$ is countably tight, there is a sequence $(a_n)_{n \in \mathbb{N}}$ in $A$ with $\bar{a}$ as a cluster point.
Setting $T = \{ a_n : n \in \mathbb{N} \} \cup \{ \bar{a} \}$ and applying \zcref{lem:AsymmetricHausdorffQuotient}, we have the canonical projection $\pi_2 : \widetilde{L} \to \widetilde{L} / \mathrm{ann}\, T$, $Q = \pi_2(\widehat{K})$ and the continuous, injective function $\pi_1 : \mathrm{cl}_B T \to \underline{C}(Q)$ with a Hausdorff image.

Clearly, $(\mathrm{span}\, T, \mathrm{cone}\, Q)$ is a duality and the topology on $Q$ is given by the subspace topology of $\sigma( \mathrm{cone}\, Q, \mathrm{span}\, T)$.
Since $T$ is countable, \zcref{lem:CountableBasisImpliesMetrisableCompactaInDual} applies such that $Q$ is metrisable and thus, separable.
Consequently, there is a dense sequence $(q_n)_{n \in \mathbb{N}}$ in $Q$.
Now, writing $\bar{g} = \pi_1(\bar{a})$ as well as $g_n = \pi_1(a_n)$, we proceed by showing that the pseudocharacter of $C$ at $\bar{g}$ is countable.
Indeed, for every $n \in \mathbb{N}$, consider the open neighbourhood of $\bar{g}$ given by
\begin{equation}
U_n = \left\{ g \in C \middle| \forall m \in \{ 1, \dots, n \} : g \left( q_m \right) < \bar{g} \left( q_m \right) + \frac{1}{n} \right\} \, .
\end{equation}
If $g \in \cap_{n \in \mathbb{N}} U_n$, it follows that $g(q_m) \le \bar{g}(q_m)$ for all $m \in \mathbb{N}$.
Moreover, recalling from \zcref{lem:AsymmetricHausdorffQuotient} that $\bar{g}$ is continuous and $g$ is lower semicontinuous, it follows that $g \le \bar{g}$.
However, since $C$ is Hausdorff, \zcref{cor:SpecialisationOrderInCaKCoincidesWithFunctionOrder} implies $g = \bar{g}$.
Because $B$ is regular, $\pi_1$ is a homeomorphism onto $C$ such that $C$ is regular as well and the claim follows from \zcref{lem:CountabilityAndPseudocharacter}.
\end{proof}
\begin{corollary}
\label{cor:RegularCountablyCompactImpliesSequentiallyCompact}
Every regular, countably compact subset of $C_a(K)$ is sequentially compact.
\end{corollary}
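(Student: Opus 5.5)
The plan is to derive the corollary from the preceding theorem, which states that every countably compact, regular $B \subseteq C_a(K)$ is Fréchet-Urysohn, combined with \zcref{lem:CountabilityAndPseudocharacter}.

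Let $B \subseteq C_a(K)$ be regular and countably compact, and let $(b_n)_{n \in \mathbb{N}}$ be a sequence in $B$. By countable compactness, the sequence has a cluster point $b \in B$. If $b$ equals $b_n$ for infinitely many $n$, the constant subsequence already converges and we are done. So assume $b_n \neq b$ for all but finitely many $n$; discarding those finitely many terms, we may assume $b_n \neq b$ for all $n$.

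The key step is to find a subsequence converging to $b$. I would go through the argument of the preceding proof rather than through the bare Fréchet-Urysohn property. The reason is that Fréchet-Urysohn only supplies some sequence from the set $\{b_n\}$ converging to $b$, and that sequence need not be a subsequence in index order. I see two routes.

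\emph{Route 1 (reordering).} Fréchet-Urysohn applied to the tail sets fixes the ordering issue. For each $N$, the point $b$ is still a cluster point of $(b_n)_{n \ge N}$, so $b \in \mathrm{cl}_B \{ b_n : n \ge N \}$. A standard diagonal argument then extracts a convergent subsequence, but this needs a countable neighbourhood base, which we do not have.

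\emph{Route 2 (repeat the theorem's proof).} This is the route I would commit to. Set $T = \{ b_n : n \in \mathbb{N} \} \cup \{ b \}$ and use \zcref{lem:AsymmetricHausdorffQuotient}. Since $B$ is regular, $\pi_1$ is a homeomorphism of $\mathrm{cl}_B T$ onto its image $C$. The argument in the previous proof shows that the pseudocharacter of $C$ at $\pi_1(b)$ is countable: $Q$ is metrisable by \zcref{lem:CountableBasisImpliesMetrisableCompactaInDual}, and one intersects countably many neighbourhoods defined via a dense sequence in $Q$. Hence $\mathrm{cl}_B T$ has countable pseudocharacter at $b$.

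Moreover, $\mathrm{cl}_B T$ is a closed subset of the countably compact space $B$, so it is countably compact. It is also regular, as a subspace of a regular space. Therefore \zcref{lem:CountabilityAndPseudocharacter}, applied to $X = \mathrm{cl}_B T$, the sequence $(b_n)$ and its cluster point $b$, yields a subsequence converging to $b$. Since $b \in B$, this proves sequential compactness.

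The main obstacle is justifying the pseudocharacter claim. It needs Hausdorffness of $C$, which was used to conclude $g = \bar g$ from $g \le \bar g$. This is supplied by part (iii) of \zcref{lem:AsymmetricHausdorffQuotient}, but that lemma assumes $B$ is Hausdorff. Regularity here, in the paper's sense of a closed neighbourhood base, does not by itself give Hausdorffness without $T_0$. However, $C_a(K)$ is $T_0$ by \zcref{prop:WeakTop}, and regular plus $T_0$ implies Hausdorff. So $B$ is Hausdorff and the lemma applies.
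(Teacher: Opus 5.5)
Your Route~2 is correct and is essentially the paper's own argument: the corollary is read off from the preceding theorem, whose proof is exactly the chain \zcref{lem:AsymmetricHausdorffQuotient} $\to$ countable pseudocharacter via a dense sequence in the metrisable $Q$ $\to$ \zcref{lem:CountabilityAndPseudocharacter}, which you run directly on $T=\{b_n\}\cup\{b\}$. Your observation that regular plus $T_0$ gives Hausdorff usefully supplies the hypothesis of \zcref{lem:AsymmetricHausdorffQuotient} that the paper uses tacitly. Your dismissal of Route~1 is mistaken, though. Since $B$ is $T_1$ and $b_n \neq b$, no single $b_n$ can occur infinitely often in a sequence from $\{b_n : n\in\mathbb{N}\}$ converging to $b$, so its indices are unbounded and passing to strictly increasing indices gives the subsequence. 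No diagonal argument or countable base is needed, and this is how the corollary follows from the Fr\'echet--Urysohn theorem in one line.
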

\begin{corollary}
For a Hausdorff subset $A$ of $C_a(K)$, the following are equivalent:
\begin{enumerate}
\item $A$ is compact
\item $A$ is countably compact and $\mathrm{cl}_{C_p(K)} A \subseteq \downarrow A$,
\item $A$ is sequentially compact and $\mathrm{cl}_{C_p(K)} A \subseteq \downarrow A$.
\end{enumerate}
Moreover, in the affirmative case, $A$ is Fréchet-Urysohn and thus angelic.
\end{corollary}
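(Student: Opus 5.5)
The plan is to show the cycle (i)$\Rightarrow$(ii)$\Rightarrow$(i), then (i)$\Rightarrow$(iii)$\Rightarrow$(ii), and finally the Fréchet-Urysohn and angelicity claims.

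The equivalence (i)$\iff$(ii) is \zcref{prop:CountablyCompactIsCompactCharacterisation}~(iii). That proposition rests on \zcref{thm:LindelöfCompactnessCondition}, which applies because $\sigma_s(C(K),L)$ is the topology of $C_p(K)$ and $C_p(K)$ is Lindelöf. Note that for (i)$\Rightarrow$(ii) compactness gives countable compactness trivially, so the cited result supplies the closure condition. Hausdorffness is not needed for this step.

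For (i)$\Rightarrow$(iii) I use that a compact Hausdorff space is regular. Hence $A$ is a regular countably compact subset of $C_a(K)$, and \zcref{cor:RegularCountablyCompactImpliesSequentiallyCompact} gives sequential compactness. The closure condition again comes from (ii).

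For (iii)$\Rightarrow$(ii), every sequentially compact space is countably compact, since a convergent subsequence has its limit as a cluster point; the closure condition is carried over unchanged. This closes the cycle.

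For the final assertion, $A$ is then compact and Hausdorff, hence regular and countably compact. The preceding theorem (countably compact and regular implies Fréchet-Urysohn) then makes $A$ Fréchet-Urysohn.

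For angelicity I check \zcref{def:angelicity} inside $A$. Let $D\subseteq A$ be relatively countably compact in $A$.
\begin{enumerate}
\item $D$ is relatively compact in $A$, since every open cover of the compact space $A$ has a finite subcover, in particular of $D$.
\item For $a\in\mathrm{cl}_A D$, the Fréchet-Urysohn property yields a sequence in $D$ converging to $a$.
\end{enumerate}

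The only step I expect to need care is the direction (i)$\Rightarrow$(ii) of the closure condition. This is exactly the $\Leftarrow$ half of \zcref{thm:LindelöfCompactnessCondition}: a $\sigma_s$-convergent net in $A$ has a cluster point $x'\in A$ with $x\le x'$. Beyond confirming that this applies, the proof is an assembly of earlier results.
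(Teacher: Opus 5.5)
Your proposal is correct and takes the same route as the paper. The paper's proof just invokes the characterisation proposition for countably compact subsets of $C_a(K)$ for (i)$\iff$(ii), plus the regularity of compact Hausdorff spaces, which through the Fr\'echet-Urysohn theorem and its sequential-compactness corollary gives (iii) and the final assertion. You have simply made explicit the steps the paper leaves implicit, including the direct check of angelicity.
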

\begin{proof}
Apply \zcref{prop:CountablyCompactIsCompactCharacterisation} and the fact that compact Hausdorff spaces are regular.
\end{proof}
\section{The Weak Topology on an Asymmetrically Normed Space}
\label{sec:WeakTopologyOnAsymmetricallyNormedSpace}
It is well-known that the weak topology on a normed space is angelic \cite[Proposition 3.108]{src:Fabian:BanachSpaceTheory}.
In this section we apply the results from the preceeding \zcref[noref]{sec:CaK} to the setting of asymmetrically normed spaces.
\begin{proposition}
Let $(X, p)$ be an asymmetrically normed space.
Then, $(X, (X, p)^*)$ is homeomorphic to a subset of $C_a(K)$ where
\begin{equation}
K = \left\{ \phi \in Y : \sup_{p(x) \le 1} \phi \left( x \right) \le 1 \right\}
\end{equation}
is compact in the $\sigma(Y, X)$ topology.
\end{proposition}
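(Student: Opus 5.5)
We write $Y=(X,p)^*$ and read the statement as: $(X,\sigma(X,Y))$ is homeomorphic to a subset of $C_a(K)$, where $K$ carries the subspace topology of $\sigma(Y,X)$. The plan is to use the evaluation map
\begin{equation}
\iota : X \to C(K), \quad (\iota x)(\phi) = \phi(x) = \langle x, \phi\rangle ,
\end{equation}
and to check, in order, that $K$ is a compact Hausdorff space, that $\iota$ is well defined, injective and linear, and that $\iota$ is a homeomorphism onto its image with respect to $\sigma(X,Y)$ and the subspace topology of $C_a(K)$.

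\textbf{Compactness of $K$.} $K$ is the polar $U^\circ$ of the closed unit ball $U=\{x : p(x)\le 1\}$, so its $\sigma(Y,X)$-compactness is the asymmetric Alaoglu theorem \cite[Theorem 2.4.30]{src:Cobzaş:FunctionalAnalysisinAsymmetricNormedSpaces}, already used in \zcref{lem:MackeyTopIsFiner}. By \zcref{prop:WeakStarTop}, $\sigma(Y,X)$ is the restriction of the Hausdorff topology $\sigma(\widetilde{Y},X)$, so $K$ is compact Hausdorff.

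\textbf{Well-definedness, linearity and injectivity of $\iota$.} Each $\iota x$ is continuous on $K$, since $\sigma(Y,X)$ is the topology of pointwise convergence. Linearity is clear. For injectivity, take $x\neq 0$. Because $p$ is an asymmetric norm, $p(x)>0$ or $p(-x)>0$. The Hahn--Banach theorem \cite[Theorem 2.2.2]{src:Cobzaş:FunctionalAnalysisinAsymmetricNormedSpaces} then gives $\phi\in Y$ with $\phi\le p$ and $\phi(x)=p(x)$, or $\phi(-x)=p(-x)$. Such a $\phi$ lies in $K$ and satisfies $\phi(x)\neq 0$.

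\textbf{Homeomorphism onto the image.} Every $\phi\in Y$ is upper semicontinuous on $(X,p)$, hence $p$-bounded, so $\phi = \|\phi\|\,\psi$ with $\psi\in K$ whenever $\|\phi\|:=\sup_{p(x)\le 1}\phi(x)>0$. If $\|\phi\|=0$, then $\phi\le 0$ on the ball $U$; since $U$ is absorbing this gives $\phi(x)\le 0$ for all $x$, and applying this to $-x$ yields $\phi=0$. Consequently $\mathrm{cone}\, K = Y$. By the description of $\sigma(X,Y)$-convergence after the theorem identifying $Y\simeq(X,\sigma(X,Y))^*$, a net $x_\alpha\to x$ in $\sigma(X,Y)$ if and only if $\limsup_\alpha\langle x_\alpha,\phi\rangle\le\langle x,\phi\rangle$ for all $\phi\in Y$. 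Positive scaling preserves this inequality, so it suffices to require it for $\phi\in K$. By \zcref{cor:CaKConvergence}, that is exactly convergence $\iota x_\alpha\to\iota x$ in $C_a(K)$. Hence $\iota$ and $\iota^{-1}$ on $\iota(X)$ preserve convergent nets, and $\iota$ is a homeomorphism onto its image.

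The main obstacle is the reduction from all of $Y$ to $K$: it needs every dual functional to be a nonnegative multiple of an element of $K$, which in turn needs the boundedness $\phi\le \|\phi\|\,p$. I would establish this boundedness via \cite[Proposition 2.1.11]{src:Cobzaş:FunctionalAnalysisinAsymmetricNormedSpaces}, exactly as it was used in the proof of the duality theorem $Y\simeq(X,\sigma(X,Y))^*$.
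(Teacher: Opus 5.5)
Your proof is correct and takes essentially the paper's route. The paper also uses the evaluation map $x \mapsto (\phi \mapsto \phi(x))$ into $C(K)$, gets compactness of $K$ from the asymmetric Alaoglu theorem, and compares the $\limsup$ characterisations of convergence in $\sigma(X,Y)$ and in $C_a(K)$ (\zcref{cor:CaKConvergence}). You additionally spell out two steps the paper's one-line proof leaves implicit: that $Y = [0,\infty)\cdot K$, so testing on $K$ suffices, and that the map is injective because each $x \neq 0$ has some $\phi \in K$ with $\phi(x) \neq 0$ (the content of \zcref{cor:XandXDualAreInDuality}).
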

\begin{proof}
That $K$ is compact is well-known \cite[Theorem 2.4.3]{src:Cobzaş:FunctionalAnalysisinAsymmetricNormedSpaces}.
Let $f : (X, (X, p)^*) \to C_a(K)$ be given by $f(x)(\phi) = \phi(x)$ for all $x \in X$ and $\phi \in K$.
Then, $f$ is a homeomorphism by \zcref{prop:WeakStarTop,cor:XandXDualAreInDuality,cor:CaKConvergence}.
\end{proof}
\begin{corollary}
An asymmetrically normed space has countable tightness in its weak topology.
\end{corollary}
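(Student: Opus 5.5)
The corollary follows from \zcref{prop:DualIsSigmaCompactImpliesCountableTightness}, applied to the duality between $X$ and $Y = (X,p)^*$. Since $p$ is an asymmetric norm, $X$ is $T_0$. By \zcref{cor:XandXDualAreInDuality}, $X$ and $Y$ are then in duality, and the weak topology of $X$ is $\sigma(X,Y)$.

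It therefore suffices to write $Y$ as a countable union of $\sigma(Y,X)$-compact sets. Let
\begin{equation}
K = \left\{ \phi \in Y : \sup_{p(x) \le 1} \phi \left( x \right) \le 1 \right\},
\end{equation}
which is $\sigma(Y,X)$-compact by the preceding proposition (via \cite[Theorem 2.4.3]{src:Cobzaş:FunctionalAnalysisinAsymmetricNormedSpaces}). Put $M_n = nK$ for $n \in \mathbb{N}$.

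First, each $M_n$ is compact. Multiplication by $n>0$ is a homeomorphism of $(Y,\sigma(Y,X))$: by \zcref{prop:WeakStarTop} this topology is pointwise convergence on $X$, and scaling by $n$ clearly preserves pointwise convergence in both directions.

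Second, $Y = \bigcup_n M_n$. Let $\phi \in Y$. Then $\phi$ is linear and upper semicontinuous with respect to $p$, so by \cite[Proposition 2.1.11]{src:Cobzaş:FunctionalAnalysisinAsymmetricNormedSpaces} there is $C>0$ with $\phi(x) \le C p(x)$ for all $x \in X$. Choosing an integer $n \ge C$ gives $\phi/n \in K$, that is, $\phi \in M_n$.

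With both facts in hand, \zcref{prop:DualIsSigmaCompactImpliesCountableTightness} shows that $\sigma(X,Y)$ has countable tightness, which is the claim.

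The only step that needs care is the boundedness $\phi \le C p$, and it is a standard characterisation of continuous linear functionals on asymmetrically normed spaces. A second route gives the same conclusion: embed $X$ homeomorphically into $C_a(K)$ via the preceding proposition, and use that $C_a(K)$ has countable tightness and that countable tightness passes to subspaces.
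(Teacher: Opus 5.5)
Your proof is correct, but your main argument is more direct than the paper's. The paper deduces the corollary from the proposition just before it. There, $X$ with its weak topology embeds homeomorphically into $C_a(K)$. Then $C_a(K)$ has countable tightness, obtained from \zcref{prop:DualIsSigmaCompactImpliesCountableTightness} via $L=\bigcup_{m,n}L_{m,n}$, and countable tightness passes to subspaces. This is exactly your ``second route''.

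You instead apply \zcref{prop:DualIsSigmaCompactImpliesCountableTightness} directly to the pair $(X,X^*)$ with $X^*=\bigcup_n nK$. This uses only the asymmetric Alaoglu theorem and the estimate $\phi\le Cp$. The same decomposition is implicitly needed to check that the paper's embedding is a homeomorphism: one must pass from $\limsup_\alpha\phi(x_\alpha)\le\phi(x)$ for $\phi\in K$ to all $\phi\in X^*$. So your route skips the detour through $C_a(K)$ and the hereditary step without losing anything.

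Your route also generalises at once. Take a $T_0$ asymmetric locally convex space defined by countably many seminorms $p_1,p_2,\dots$. Its dual is the union of the sets $nK_m$, where $K_m$ is the weak-$\ast$ compact polar of $\{x:\max_{i\le m}p_i(x)\le 1\}$. Hence its weak topology also has countable tightness.

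In exchange, the paper's route places $X$ inside the $C_a(K)$ framework. The paper needs that embedding anyway for the subsequent Fr\'echet-Urysohn and angelicity corollary.
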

\begin{corollary}
Let $X$ be an asymmetrically normed space and $A \subseteq X$ a subset equipped with the weak topology.
If $A$ is countably compact and $\sigma_s(X, X^*)$-closed in $X$, then $A$ is compact.
If, in addition, $A$ is Hausdorff, then it is Fréchet-Urysohn and angelic.
\end{corollary}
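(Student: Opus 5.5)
The plan is to transfer everything to $C_a(K)$ via the preceding proposition and then read the claims off the results of \zcref{sec:CaK}. Let $K$ be the weak-$\ast$ compact dual unit ball and $f : X \to C_a(K)$ the embedding $f(x)(\phi) = \phi(x)$, which is a homeomorphism onto its image for the weak topology $\sigma(X, X^*)$. Put $A' = f(A)$. Since $f$ is a homeomorphism onto its image, $A'$ is countably compact in $C_a(K)$. Compactness and the Fréchet-Urysohn property are intrinsic to a subspace, so it suffices to prove both for $A'$.

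For compactness I would apply \zcref{prop:CountablyCompactIsCompactCharacterisation}(iii). It says that $A'$ is compact as soon as $B = \mathrm{cl}_{C_p(K)} A' \subseteq \downarrow A'$, so the task is to show $B \subseteq A'$.

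\emph{Step 1: the symmetric topologies match.} The symmetric topology $\sigma_s(X, X^*)$ is generated by the maps $x \mapsto \phi(x)$ for $\phi \in X^*$. Pointwise convergence on $K$ corresponds to convergence of $\phi(x_\alpha)$ for $\phi \in K$. Since every $\phi \in X^*$ is a positive multiple of an element of $K$ (by \cite[Proposition 2.1.11]{src:Cobzaş:FunctionalAnalysisinAsymmetricNormedSpaces}, $\phi \le C p$), $f$ is also a homeomorphism from $(X, \sigma_s(X, X^*))$ onto $f(X) \subseteq C_p(K)$.

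\emph{Step 2: the $C_p(K)$-closure stays in $A'$.} This is the main obstacle. A point $g \in B$ need not lie in $f(X)$: the $C_p(K)$-closure is taken inside $C(K)$, which is larger than $f(X)$. I would argue as in \zcref{lem:CountableCompactnessImpliesRelativeCompactnessInSymmetricClosure}.
\begin{itemize}
\item Take a net $(f(a_\alpha))$ in $A'$ converging pointwise on $K$ to $g$.
\item By \zcref{lem:CountableCompactnessImpliesRelativeCompactnessInSymmetricClosure}, $A'$ is relatively compact in $B$. Hence the net has a $C_a(K)$-cluster point $c \in B$ with $g \le c$, and $c$ is itself a $C_p(K)$-limit of elements of $A'$.
\item To land back in $f(X)$, I would instead cover $B$ using \zcref{thm:LindelöfAndSymmetricClosureImpliesCompactness}, applied directly in the dual pair $(X, X^*)$. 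This needs $(X, \sigma_s(X, X^*))$ to be Lindelöf, which I expect to follow by embedding it via Step 1 into $C_p(K)$ (K-analytic). One must check that the relevant subset inherits the Lindelöf property, e.g. through the Lindelöf property of $A$'s symmetric closure inside $C_p(K)$.
\item Then $\mathrm{cl}_{\sigma_s(X, X^*)} A = A$ by hypothesis, and \zcref{thm:LindelöfCompactnessCondition} gives compactness immediately, since \zcref{eq:SymmetricClosureContainedInDOwnwardClosure} holds trivially.
\end{itemize}
Should the Lindelöf property of $X$ fail, the fallback is to run the proof of \zcref{thm:LindelöfCompactnessCondition} only on $\mathrm{cl}_{\sigma_s} A$. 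That proof uses only the Lindelöf property of that closure, which holds because it sits in the Lindelöf space $C_p(K)$ as a closed subset of $\mathrm{cl}_{C_p(K)}A'$.

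\emph{Step 3: the Hausdorff case.} If $A$ is moreover Hausdorff, then $A$ is compact Hausdorff by Step 2, hence regular and countably compact. The preceding theorem on regular countably compact subsets of $C_a(K)$ then shows that $A'$ is Fréchet-Urysohn. By the last corollary of \zcref{sec:CaK}, this together with compactness yields angelicity, which transfers back to $A$ through the homeomorphism $f$.
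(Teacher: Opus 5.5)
\textbf{There is a genuine gap in Step~2, and it cannot be repaired: the first assertion of the corollary is false.}

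You follow the route the paper evidently intends. The paper states the corollary without proof, as a consequence of the embedding into $C_a(K)$, \zcref{prop:CountablyCompactIsCompactCharacterisation} and the last corollary of \zcref{sec:CaK}. You correctly spot what that route glosses over: $\mathrm{cl}_{C_p(K)}A'$ may leave $f(X)$, so $\sigma_s$-closedness of $A$ does not yield $\mathrm{cl}_{C_p(K)}A'\subseteq\downarrow A'$. Step~2 does not close this gap, for three reasons.
\begin{enumerate}
\item Lindelöfness passes to closed subspaces but not to arbitrary ones. Since $f(X)$ need not be closed in $C_p(K)$, nothing follows for $(X,\sigma_s(X,X^*))$.
\item The fallback is circular. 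By Step~1, $f(\mathrm{cl}_{\sigma_s}A)=\mathrm{cl}_{C_p(K)}A'\cap f(X)$. Because this set contains $A'$, it is closed in $\mathrm{cl}_{C_p(K)}A'$ if and only if $\mathrm{cl}_{C_p(K)}A'\subseteq f(X)$, which is precisely what was in doubt.
\item The input \enquote{$C_p(K)$ is K-analytic, hence Lindelöf}, which the paper also uses, is false for general compact $K$; it holds for Talagrand compacta such as Eberlein ones. For $K=[0,\omega_1]$, the paper's own set $\{\mathbf{1}_{[0,\alpha]}:\alpha<\omega_1\}$ is closed in $C_p(K)$ (shown below), countably compact and not compact. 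This contradicts \zcref{prop:CountablyCompactIsCompactCharacterisation}(ii)--(iv).
\end{enumerate}
Step~3 inherits the gap, since it obtains regularity of $A$ from compactness.

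\textbf{Counterexample.} Let $X=C([0,\omega_1])$ with the asymmetric norm $p(g)=\max\{\max g,0\}$.
\begin{enumerate}
\item A linear functional satisfies $\phi\le Cp$ if and only if it is positive. Hence $X^*$ is the cone of positive Radon measures, and $\sigma_s(X,X^*)$ is the ordinary weak topology of the Banach space $C([0,\omega_1])$. A net $g_\beta$ converges to $g$ in $\sigma(X,X^*)$ if and only if $\limsup_\beta\int g_\beta\,\mathrm{d}\mu\le\int g\,\mathrm{d}\mu$ for every such $\mu$.
\item Take $A=\{\mathbf{1}_{[0,\alpha]}:\alpha<\omega_1\}$. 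A sequence $(\mathbf{1}_{[0,\alpha_n]})_n$ is dominated by $\mathbf{1}_{[0,\sup_n\alpha_n]}\in A$ and therefore converges to it. So $A$ is countably compact.
\item A $\sigma_s$-closure point of $A$ is a pointwise limit of a net in $A$. It is therefore the indicator of an initial segment of $[0,\omega_1]$ containing $0$ but not $\omega_1$. Continuity forces this segment to be $[0,\delta]$ with $\delta<\omega_1$. So $A$ is $\sigma_s(X,X^*)$-closed, and the same argument shows it is closed in $C_p([0,\omega_1])$.
\item The $\sigma(X,X^*)$-open sets $\{h\in X:h(\gamma+1)<1/2\}$, $\gamma<\omega_1$, cover $A$. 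Since $\mathbf{1}_{[0,\alpha]}$ lies in the $\gamma$-th set only for $\alpha\le\gamma$, no finite subfamily covers $A$. So $A$ is not compact.
\end{enumerate}

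\textbf{What your argument does prove.} It gives the corollary under the extra hypothesis that $\mathrm{cl}_{\sigma_s(X,X^*)}A$ is $\sigma_s$-Lindelöf. This holds, for example, when $X$ is separable for the symmetrised norm $\max\{p(x),p(-x)\}$, because then $\sigma_s(X,X^*)$ has a countable network. In that case, apply \zcref{thm:LindelöfCompactnessCondition} directly to the pair $(X,X^*)$; its proof only uses Lindelöfness of the symmetric closure. Your Step~3 then runs as in the paper.
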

\appendix
\printbibliography
\end{document}